\documentclass[a4paper,11pt]{amsart}
\usepackage[utf8]{inputenc}

\usepackage{amssymb,amsfonts,amsmath}
\usepackage[english]{babel}
\usepackage{a4wide}
\usepackage{mathtools}
\usepackage{amsthm}
\usepackage{array,booktabs,tabularx}
\usepackage{placeins}
\usepackage{xcolor}
\usepackage[hidelinks]{hyperref}
\hypersetup{
  pdftitle={Separation properties of scrambled digital nets and related random point sets},
  pdfauthor={Kosuke Suzuki},
  pdfsubject={Local geometry of randomized quasi-Monte Carlo point sets},
  pdfkeywords={quasi-Monte Carlo, digital nets, Owen scrambling, matrix scrambling, linear scrambling, separation radius, mesh ratio, jittered sampling, Latin hypercube sampling}
}

\usepackage{url}
\newcommand{\PP}{\mathbb P}
\newcommand{\EE}{\mathbb E}
\newenvironment{romanenumerate}
  {\begin{enumerate}}
  {\end{enumerate}}

\theoremstyle{plain}
\newtheorem{theorem}{Theorem}[section]
\newtheorem{corollary}[theorem]{Corollary}
\newtheorem{lemma}[theorem]{Lemma}

\theoremstyle{definition}
\newtheorem{definition}[theorem]{Definition}
\newtheorem{remark}[theorem]{Remark}

\begin{document}

\title[Separation properties of scrambled point sets]{Separation properties of scrambled digital nets and related random point sets}
\author[K.~Suzuki]{Kosuke Suzuki}
\address[K.~Suzuki]{Faculty of Science, Yamagata University, 1-4-12 Kojirakawa-machi, Yamagata, 990\mbox{-}8560, Japan}
\email[]{kosuke-suzuki@sci.kj.yamagata-u.ac.jp}
\thanks{The work of K.S.\ is supported by JSPS KAKENHI Grant Numbers 24K06857 and 26K00620.}

\date{July 30, 2026}

\subjclass[2020]{Primary 11K36, 11K45; Secondary 52C17}
\keywords{quasi-Monte Carlo, digital nets, Owen scrambling, matrix scrambling, linear scrambling, separation radius, mesh ratio, jittered sampling, Latin hypercube sampling}

\begin{abstract}
We study how standard randomization procedures affect the local geometry of
quasi-Monte Carlo point sets, as measured by their minimum distance and mesh
ratio.  Although probabilistic selection within structured lattice families
can produce quasi-uniform point sets, randomizing an existing low-discrepancy
construction need not preserve quasi-uniformity.  We first determine sharp
probabilistic orders for Monte Carlo, jittered, and Latin hypercube sampling,
whose mesh ratios diverge as positive powers of $N$.  The orders are
$\Theta_{\PP}(N^{1/d}(\log N)^{1/d})$ for Monte Carlo sampling,
$\Theta_{\PP}(N^{1/(d+1)})$ for jittered sampling, and, for $d\ge2$,
$\Theta_{\PP}(N^{1/d}(\log N)^{1/d})$ for Latin hypercube sampling.  We also
obtain a Weibull limit law for
the minimum distance of jittered samples.  For full Owen scrambling, every
family of fixed-$t$ nets has minimum distance
$O_{\PP}(N^{-3/(2d)})$, and its mesh ratio is therefore
$\Omega_{\PP}(N^{1/(2d)})$.  Under uniform coincidence and common-prefix conditions, these bounds are
sharp up to logarithmic factors.  Moreover, a
single full Owen scrambling of any $(t,d)$-sequence is almost surely
non-quasi-uniform.  By contrast, for matrix and linear scrambling of binary digital nets with fixed $t$
in dimension $d\ge2$, the mesh ratio is $O_{\PP}(\log N)$, whereas it is
$\Theta_{\PP}(\log N)$ in the separate balanced-prefix affine-tail model.  The model also yields the exact probabilistic
order for one-dimensional binary digital $(0,m,1)$-nets under matrix or linear
scrambling.  These results demonstrate that the geometric effect of
randomization is governed by whether it introduces local independence or
shared algebraic randomness.
\end{abstract}

\maketitle

\section{Introduction}

Quasi-Monte Carlo point sets are designed primarily for uniform distribution
and small integration error.  Their usefulness in approximation and
space-filling design depends on a different aspect of their geometry.  The
covering radius measures whether the whole domain is reached, whereas the
separation radius detects clusters of nearly coincident points.  A family is
quasi-uniform when the ratio of these two radii remains bounded.  In that
case, both radii have the optimal order $N^{-1/d}$.  This condition is central
in scattered data approximation
\cite{Wendland2005,SchabackWendland2006} and in the design of computer
experiments~\cite{PronzatoMuller2012,PronzatoZhigljavsky2023}.  Its role for
quasi-Monte Carlo constructions leads to a basic question: are QMC point sets
and sequences quasi-uniform?  Goda initiated the recent study of this question
for classical QMC constructions by proving that the two-dimensional Sobol'
sequence is not quasi-uniform~\cite{Goda2024}.  The question has since been
studied systematically for the two principal structured QMC families,
lattice-type constructions and digital nets and sequences
\cite{DGLPS25,DGS25}.  Goda, Hofer, and Suzuki have also shown that Halton
sequences are not quasi-uniform in any dimension $d\ge2$, together with
several further negative results for Halton-type sequences
\cite{GodaHoferSuzuki2026}.

The answer is largely positive on the lattice side.  Rank-1 and Fibonacci
lattice point sets, Frolov point sets, and Kronecker sequences have all been
analyzed from this perspective, and several of these classical constructions
combine low discrepancy with quasi-uniformity~\cite{DGLPS25}.  Moreover, for
every prime $N$ and fixed dimension, a positive proportion of the generating
vectors of rank-1 lattice point sets yield both bounded mesh ratio and
star-discrepancy of order
$N^{-1}(\log N)^d$~\cite[Theorem~3.7]{DGLPS25}.  Thus there are many lattice
point sets that are simultaneously suitable for integration and
space-filling.  Recent work has also developed constructive space-filling
lattice designs with provable quasi-uniformity~\cite{SakaiGoda2026}.

The picture for digital nets is markedly different.  Their minimum-distance
properties were previously investigated by seeking $(t,m,s)$-nets with
maximized minimum distance
\cite{GruenschlossHanikaSchwedeKeller2008,GruenschlossKeller2009}.  However,
the only explicit family currently known to combine low discrepancy and
quasi-uniformity in dimension $d\ge2$ is the two-dimensional
Larcher--Pillichshammer digital net~\cite{DGS25}.  No such family is known in
dimension $d\ge3$.  By contrast, several standard low-discrepancy digital
constructions have been proved not to be quasi-uniform
\cite{Goda2024,DGS25,Suzuki2025}.  This asymmetry raises the question of
whether the standard randomizations of digital nets improve their local
geometry.

These randomizations scramble the digits of a given deterministic net.  Full
Owen scrambling applies independent random permutations throughout the rooted
digit tree~\cite{Owe95}.  Matrix-based scrambling instead acts through random
nonsingular lower triangular
matrices~\cite{Matousek1998,Owen2003,DP10}.  Both preserve the net property and
are fundamental tools for randomized quasi-Monte Carlo integration.  The
matrix and linear scramblings considered here both use such matrices.  The
former uses a fixed digital shift and the latter a random one.  The
dependence properties of scrambled nets have been studied in detail
in~\cite{WiartLemieuxDong2021}, but their local space-filling geometry is not
determined by the net property alone.

Our main results for digital scrambling reveal a sharp contrast between the
two scrambling mechanisms.  Under full Owen scrambling, conditional on the
scrambled prefixes, the random subtrees below distinct prefixes are independent.
We show that this local independence creates pairs at a polynomially smaller
scale than $N^{-1/d}$ and hence produces polynomial growth of the mesh ratio.
By contrast, matrix and linear scrambling preserve a shared digital linear
structure.  We prove that, for binary digital fixed-$t$ nets in dimension
$d\ge2$ satisfying the coincidence bound, the mesh ratio is
$O_{\PP}(\log N)$.  Although this does not establish quasi-uniformity, it rules
out the polynomial deterioration seen under full Owen scrambling.  We further
show that the logarithmic order is attained in the balanced-prefix affine-tail
model and for one-dimensional binary digital nets.

For comparison, we also analyze Monte Carlo, jittered, and Latin hypercube
sampling as benchmarks for independent sampling, local stratification, and
marginal stratification, respectively.  We determine their sharp probabilistic
orders and show that all three have mesh ratios that diverge polynomially in
$N$.

\subsection{Summary of results}

The precise results are summarized in Table~\ref{tab:summary-results}.  The
dimension $d$ is fixed throughout.  In the digital-net results, the base and the quality
parameter $t$, when applicable, are also fixed, and $N=b^m\to\infty$.  The
balanced-prefix affine-tail row additionally assumes $m=kd$.

\begin{table}[htbp]
\caption{Summary of separation-radius and mesh-ratio bounds.}
\label{tab:summary-results}
\footnotesize
\setlength{\tabcolsep}{4pt}
\renewcommand{\arraystretch}{1.20}
\begin{tabularx}{\textwidth}{@{}>{\centering\arraybackslash}p{0.08\textwidth}>{\raggedright\arraybackslash}p{0.30\textwidth}>{\centering\arraybackslash}p{0.30\textwidth}>{\centering\arraybackslash}X@{}}
\toprule
\multicolumn{1}{c}{Section}
& \multicolumn{1}{l}{Model}
& Separation radius
& Mesh ratio \\
\midrule
\ref{subsec:monte-carlo}
& Monte Carlo
& $\Theta_{\PP}(N^{-2/d})$
& $\Theta_{\PP}\!\left(N^{1/d}(\log N)^{1/d}\right)$ \\
\addlinespace
\ref{subsec:jittered}
& Jittered sampling
& $\Theta_{\PP}(N^{-1/d-1/(d+1)})$
& $\Theta_{\PP}(N^{1/(d+1)})$ \\
\addlinespace
\ref{subsec:latin-hypercube}
& Latin hypercube sampling ($d\ge2$)
& $\Theta_{\PP}(N^{-2/d})$
& $\Theta_{\PP}\!\left(N^{1/d}(\log N)^{1/d}\right)$ \\
\midrule
\ref{sec:owen}
& Full Owen scrambling of fixed-$t$ nets with coincidence and common-prefix conditions
& $\Omega_{\PP}\!\left(N^{-3/(2d)}(\log N)^{-(d-1)/(2d)}\right)$,
  $O_{\PP}\!\left(N^{-3/(2d)}\right)$
& $\Omega_{\PP}\!\left(N^{1/(2d)}\right)$,
  $O_{\PP}\!\left(N^{1/(2d)}(\log N)^{(d-1)/(2d)}\right)$ \\
\midrule
\ref{subsec:higher-dimensional-matrix-linear}
& Matrix or linear scrambling of binary digital fixed-$t$ nets with coincidence bound ($d\ge2$)
& $\Omega_{\PP}(N^{-1/d}(\log N)^{-1})$
& $O_{\PP}(\log N)$ \\
\addlinespace
\ref{subsec:balanced-affine-tails}
& Balanced-prefix affine-tail model ($m=kd$)
& $\Theta_{\PP}(N^{-1/d}(\log N)^{-1})$
& $\Theta_{\PP}(\log N)$ \\
\addlinespace
\ref{subsec:one-dimensional-matrix-linear}
& Matrix or linear scrambling of binary digital $(0,m,1)$-nets
& $\Theta_{\PP}(N^{-1}(\log N)^{-1})$
& $\Theta_{\PP}(\log N)$ \\
\bottomrule
\end{tabularx}
\end{table}
\FloatBarrier

Together with the universal geometric bounds recalled in
Section~\ref{sec:prelim}, all rows determine the polynomial exponent of the
separation radius and mesh ratio.  The logarithmic factor is exact where a
$\Theta_{\PP}$ statement is given.  The Owen row retains a possible
logarithmic gap, while the higher-dimensional matrix and linear scrambling row
gives only one-sided logarithmic bounds.  The one-dimensional scrambling row
follows from the balanced-prefix affine-tail theorem.  In dimension one, full
Owen scrambling is included in the jittered-sampling row.

For fixed-shift matrix scrambling, the stated bounds are uniform in the shift.
The Owen bounds require a uniform elementary-interval coincidence bound and a
uniform cutoff on the total common-prefix depth.  For digital point sets, the
coincidence bound implies the cutoff.  Without these conditions, arbitrary
fixed-$t$ nets still satisfy the one-sided close-pair bound of
Corollary~\ref{cor:owen-upper-net}.

Several finer conclusions are not displayed in the table.  For jittered
sampling, we obtain an explicit Weibull limit law for the separation radius.
In one dimension, the first $b^m$ points of any $(0,1)$-sequence have the same
limit law after full Owen scrambling.  At the sequence level, a single full
Owen scrambling of any $(t,d)$-sequence almost surely destroys
quasi-uniformity.  Under the uniform conditions in the Owen row, we also
determine the almost-sure polynomial exponents of the separation radius and
mesh ratio.  The probabilistic and almost-sure exponent conclusions extend to
Halton sequences in pairwise coprime bases, with independent full Owen
scrambling applied in the respective bases.  Binary digital
$(0,1)$-sequences under matrix or linear scrambling are almost surely
non-quasi-uniform as well.

The remainder of the paper is organized as follows.
Section~\ref{sec:prelim} introduces the geometric quantities, probabilistic
notation, digital nets, and the scrambling models.
Section~\ref{sec:random-sampling} treats Monte Carlo, jittered, and Latin
hypercube sampling.  Section~\ref{sec:owen} studies full Owen scrambling using
prefix conditioning and second-moment arguments.  Section~\ref{sec:linear}
develops the affine-tail formulation for matrix and linear scrambling.

\FloatBarrier
\section{Preliminaries}\label{sec:prelim}

\subsection{Geometric quantities and quasi-uniformity}

We treat finite point sets as multisets throughout.  Cardinalities and
occupancies count multiplicity, and pairwise minima are over different indices.
Minimum-distance quantities are understood only for $N\ge2$.
We index an $N$-point set as $P=(\boldsymbol{x}_n)$, $0\le n<N$.  Following
the notation of~\cite{DGLPS25,DGS25}, define its $\ell^\infty$ covering radius, separation
radius, and mesh ratio by
\[
 h_\infty(P)
 :=
 \sup_{\boldsymbol{z}\in[0,1]^d}
 \min_{0\le n<N}
 \lVert\boldsymbol{z}-\boldsymbol{x}_n\rVert_\infty,
\]
\[
 q_\infty(P)
 :=
 \frac12
 \min_{0\le n<n'<N}
 \lVert\boldsymbol{x}_n-\boldsymbol{x}_{n'}\rVert_\infty,
\]
and
\[
 \rho_\infty(P):=\frac{h_\infty(P)}{q_\infty(P)},
\]
with the convention $\rho_\infty(P)=\infty$ when $q_\infty(P)=0$.  We also write
\[
 R_\infty(P):=2q_\infty(P)
 =\min_{0\le n<n'<N}
  \lVert\boldsymbol{x}_n-\boldsymbol{x}_{n'}\rVert_\infty
\]
for the minimum interpoint distance.
For an event $E$, we write $\chi(E)$ for its indicator.

By~\cite[Lemma~2.1]{PronzatoZhigljavsky2023}, every $N$-point set
$P$ in $[0,1]^d$, with $N\ge2$, satisfies
\begin{equation}\label{eq:universal-geometric-bounds}
 h_\infty(P)\ge \frac{1}{2N^{1/d}},
 \qquad
 q_\infty(P)\le C_d N^{-1/d},
\end{equation}
where $C_d>0$ depends only on $d$.  Consequently,
$\rho_\infty(P)\ge 1/(2C_d)$.  Thus the mesh ratio is always bounded below by a positive constant, and a
uniform upper bound forces both radii to have the optimal scale $N^{-1/d}$.
This motivates the following definition.

\begin{definition}
Let $\mathcal X$ be a family of finite point sets in $[0,1]^d$ with
$\sup_{P\in\mathcal X}|P|=\infty$.  It is called quasi-uniform if
\[
 \sup_{P\in\mathcal X}\rho_\infty(P)<\infty.
\]
For an infinite sequence $\mathcal S=(\boldsymbol{x}_n)_{n\ge0}$ in
$[0,1]^d$, write $P_N(\mathcal S)=(\boldsymbol{x}_n)_{0\le n<N}$.
The sequence $\mathcal S$ is called quasi-uniform if
\[
 \sup_{N\ge2}\rho_\infty(P_N(\mathcal S))<\infty.
\]
We will often study selected initial point sets, such as the subfamily
$(P_{b^m}(\mathcal S))_{m\ge1}$ consisting of the first $b^m$ points.
\end{definition}

By~\eqref{eq:universal-geometric-bounds}, a family with unbounded cardinalities
is quasi-uniform if and only if, uniformly over the family,
\[
 h_\infty(P)\le C N^{-1/d},
 \qquad
 q_\infty(P)\ge c N^{-1/d}
\]
for some constants $c,C>0$.  In particular, the first bound in
\eqref{eq:universal-geometric-bounds} converts an upper bound for the
separation radius into a lower bound for the mesh ratio.

Since all norms on $\mathbb R^d$ are equivalent, boundedness of the mesh ratio
does not depend on the chosen $\ell^p$ norm when $d$ is fixed.  The
$\ell^\infty$ norm is used throughout because it is adapted to digital
partitions.

The following quantitative form of the monotonicity argument in
\cite[proof of Lemma~1.3]{DGLPS25} will be used repeatedly.

\begin{lemma}\label{lem:initial-segment-comparison}
Let $\mathcal S=(\boldsymbol{x}_n)_{n\ge0}$ be a sequence in $[0,1]^d$, and
write $P_N=P_N(\mathcal S)$.  Let $2\le M_1<M_2<\cdots$ be integers.
If $M_r\le N<M_{r+1}$, then
\[
 R_\infty(P_{M_{r+1}})
 \le R_\infty(P_N)
 \le R_\infty(P_{M_r}),
 \qquad
 h_\infty(P_{M_{r+1}})
 \le h_\infty(P_N)
 \le h_\infty(P_{M_r}).
\]
Consequently,
\[
 \frac{2h_\infty(P_{M_{r+1}})}
      {R_\infty(P_{M_r})}
 \le
 \rho_\infty(P_N)
 \le
 \frac{2h_\infty(P_{M_r})}
      {R_\infty(P_{M_{r+1}})},
\]
with the extended-real convention if a denominator vanishes.
\end{lemma}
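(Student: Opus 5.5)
The proof rests on two monotonicity facts for nested initial segments: the minimum distance can only shrink as points are added, and the covering radius can only shrink as points are added. Since $M_r\le N<M_{r+1}$, we have the inclusions $P_{M_r}\subseteq P_N\subseteq P_{M_{r+1}}$ as indexed multisets, the index sets being $\{0,\dots,M_r-1\}\subseteq\{0,\dots,N-1\}\subseteq\{0,\dots,M_{r+1}-1\}$. All three sets have at least two points, because $M_1\ge 2$, so $R_\infty$ is defined for each.

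For the minimum distance, $R_\infty(P_K)$ is a minimum of $\lVert\boldsymbol{x}_n-\boldsymbol{x}_{n'}\rVert_\infty$ over the pairs $0\le n<n'<K$. Enlarging $K$ enlarges the set of pairs and so can only decrease the minimum. This gives $R_\infty(P_{M_{r+1}})\le R_\infty(P_N)\le R_\infty(P_{M_r})$. For the covering radius, fix $\boldsymbol{z}\in[0,1]^d$. The quantity $\min_{0\le n<K}\lVert\boldsymbol{z}-\boldsymbol{x}_n\rVert_\infty$ is nonincreasing in $K$, and taking the supremum over $\boldsymbol{z}$ preserves the inequality. This yields $h_\infty(P_{M_{r+1}})\le h_\infty(P_N)\le h_\infty(P_{M_r})$.

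For the mesh ratio, write $\rho_\infty(P_N)=h_\infty(P_N)/q_\infty(P_N)=2h_\infty(P_N)/R_\infty(P_N)$. To get the upper bound, bound the numerator above by $2h_\infty(P_{M_r})$ and the denominator below by $R_\infty(P_{M_{r+1}})$. To get the lower bound, bound the numerator below by $2h_\infty(P_{M_{r+1}})$ and the denominator above by $R_\infty(P_{M_r})$.

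The only delicate point is the extended-real convention when a denominator vanishes. The numerators $h_\infty$ are strictly positive by \eqref{eq:universal-geometric-bounds}, so a ratio with vanishing denominator equals $+\infty$. The cases are as follows.
\begin{itemize}
\item If $R_\infty(P_{M_{r+1}})=0$, the upper bound is $+\infty$ and holds trivially.
\item If $R_\infty(P_N)=0$, then $R_\infty(P_{M_{r+1}})=0$ by monotonicity, so both $\rho_\infty(P_N)$ and the upper bound equal $\infty$.
\item If $R_\infty(P_{M_r})=0$, monotonicity forces $R_\infty(P_N)=0$, so $\rho_\infty(P_N)=\infty$ and the lower bound, which is also $\infty$, holds with equality.
\end{itemize}
I expect no real obstacle here; the argument is a direct monotonicity check together with this bookkeeping.
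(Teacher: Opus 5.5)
Your proof is correct and is exactly the monotonicity argument the paper relies on. The paper states the lemma without writing out a proof, citing instead the proof of Lemma~1.3 in \cite{DGLPS25}: nested initial segments make both $R_\infty$ and $h_\infty$ nonincreasing, and the mesh-ratio bounds follow by bounding the numerator and denominator separately. Your case analysis for vanishing denominators, which uses the positivity of $h_\infty$ from \eqref{eq:universal-geometric-bounds}, handles the extended-real convention properly.
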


\subsection{Order notation}\label{subsec:prob-order}

For deterministic quantities, a subscript on an order symbol indicates the
parameters on which its implicit constants may depend.  Thus, for example,
$f_N=O_{d,t}(g_N)$ means that $|f_N|\le C_{d,t}g_N$ for some constant
$C_{d,t}$ depending only on $d$ and $t$.  We use the same convention for
$\Omega_{d,t}$, $\Theta_{d,t}$, and $\asymp_{d,t}$, with the list of
parameters changed as appropriate.

Let $(X_m)_{m\ge1}$ be random variables and let $(a_m)_{m\ge1}$ be positive
deterministic numbers.  We write $X_m=O_{\PP}(a_m)$ if, for every
$\varepsilon>0$, there exist constants $C_\varepsilon>0$ and
$m_\varepsilon$ such that
\[
 \PP\bigl(|X_m|>C_\varepsilon a_m\bigr)\le\varepsilon
 \qquad\text{for all }m\ge m_\varepsilon.
\]
For positive random variables, we write $X_m=\Omega_{\PP}(a_m)$ if
$a_m/X_m=O_{\PP}(1)$, and
$X_m=\Theta_{\PP}(a_m)$ if both $X_m=O_{\PP}(a_m)$ and
$X_m=\Omega_{\PP}(a_m)$.  Equivalently, $X_m/a_m$ is bounded and bounded
away from zero in probability.

\begin{lemma}\label{lem:probabilistic-tools}
Let $Y_m,Z_m$ be nonnegative random variables, and let $a_m>0$ be deterministic.
\begin{romanenumerate}
\item If $\EE[Y_m]=O(a_m)$, then $Y_m=O_{\PP}(a_m)$.  If $Z_m$ is integer-valued and
$\EE[Z_m]\to0$, then $\PP(Z_m>0)\to0$.
\item If $\EE[Z_m]>0$, then
\[
 \PP(Z_m=0)
 \le
 \frac{\operatorname{Var}(Z_m)}{(\EE[Z_m])^2}.
\]
\end{romanenumerate}
\end{lemma}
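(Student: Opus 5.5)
Both parts are standard inequalities, so the plan is simply to reduce each to Markov's inequality or Chebyshev's inequality.

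\emph{Part (i).} Suppose $\EE[Y_m]\le K a_m$ for all large $m$. Fix $\varepsilon>0$ and set $C_\varepsilon:=K/\varepsilon$. Markov's inequality gives
\[
\PP\bigl(Y_m>C_\varepsilon a_m\bigr)\le \frac{\EE[Y_m]}{C_\varepsilon a_m}\le \varepsilon
\]
for all large $m$. Since $Y_m\ge0$, we have $|Y_m|=Y_m$, so this is exactly the definition of $Y_m=O_{\PP}(a_m)$ from Subsection~\ref{subsec:prob-order}. For the second claim, $Z_m$ is nonnegative and integer-valued, so $\{Z_m>0\}=\{Z_m\ge1\}$. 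Markov's inequality then gives $\PP(Z_m>0)\le \EE[Z_m]\to0$.

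\emph{Part (ii).} If $Z_m=0$, then $|Z_m-\EE[Z_m]|=\EE[Z_m]$. Hence
\[
\{Z_m=0\}\subseteq\bigl\{|Z_m-\EE[Z_m]|\ge \EE[Z_m]\bigr\}.
\]
We may assume $\operatorname{Var}(Z_m)<\infty$, since otherwise the bound is trivial. Chebyshev's inequality, applied with threshold $\EE[Z_m]>0$, gives $\PP(Z_m=0)\le \operatorname{Var}(Z_m)/(\EE[Z_m])^2$.

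There is no substantive obstacle. The only points needing care are the two conventions above: reading the $O(a_m)$ hypothesis as holding for all sufficiently large $m$, which matches the definition of $O_{\PP}$, and allowing an infinite variance, in which case (ii) holds trivially.
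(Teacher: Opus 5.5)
Your proof is correct and follows the paper's argument: Markov's inequality for both assertions in (i), and the inclusion $\{Z_m=0\}\subseteq\{|Z_m-\EE[Z_m]|\ge\EE[Z_m]\}$ followed by Chebyshev's inequality for (ii). The only thing to adjust is to take $K>0$ so that $C_\varepsilon>0$, as the definition of $O_{\PP}$ requires.
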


\begin{proof}
For every $u>0$, Markov's inequality gives
$\PP(Y_m>u a_m)\le \EE[Y_m]/(u a_m)$.  This proves the first assertion in part~(i), and the second follows from
$\PP(Z_m>0)\le \EE[Z_m]$ when $Z_m$ is nonnegative and integer-valued.
For part~(ii), $\{Z_m=0\}\subseteq
\{|Z_m-\EE[Z_m]|\ge \EE[Z_m]\}$, so the estimate follows from
Chebyshev's inequality.
\end{proof}

\begin{lemma}[First Borel--Cantelli lemma]
\label{lem:first-borel-cantelli}
If $(A_m)_{m\ge1}$ is a sequence of events such that
\[
 \sum_{m=1}^{\infty}\PP(A_m)<\infty,
\]
then almost surely only finitely many of the events $A_m$ occur.
\end{lemma}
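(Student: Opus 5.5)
The first Borel--Cantelli lemma is classical, so I will give only the short measure-theoretic argument. The event that infinitely many $A_m$ occur is the limit superior
\[
 \limsup_{m\to\infty}A_m=\bigcap_{k\ge1}\bigcup_{m\ge k}A_m .
\]
The plan is to show that this event has probability zero.

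First, for each fixed $k$, the intersection is contained in the tail union $\bigcup_{m\ge k}A_m$. Monotonicity and countable subadditivity of $\PP$ then give
\[
 \PP\Bigl(\limsup_{m\to\infty}A_m\Bigr)\le\sum_{m\ge k}\PP(A_m).
\]
Second, the right-hand side is the tail of a convergent series by hypothesis, so it tends to $0$ as $k\to\infty$. Since the left-hand side does not depend on $k$, it must be zero.

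Therefore, almost surely only finitely many of the events $A_m$ occur. Nothing here is delicate; the only step that uses the hypothesis is the passage from countable subadditivity to a vanishing tail sum.

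An equivalent route is to set $Z=\sum_m\chi(A_m)$. By monotone convergence, $\EE[Z]=\sum_m\PP(A_m)<\infty$. Hence $Z<\infty$ almost surely, and this is exactly the statement that only finitely many $A_m$ occur. This version matches the expectation-based tools of Lemma~\ref{lem:probabilistic-tools}, and either argument suffices.
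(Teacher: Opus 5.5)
Your proof is correct: bounding $\PP(\limsup_m A_m)$ by the tail $\sum_{m\ge k}\PP(A_m)$ via countable subadditivity, or equivalently noting that $\EE[\sum_m\chi(A_m)]<\infty$, is the standard argument. The paper gives no proof of its own and records the lemma as a classical fact, so your argument supplies the justification it relies on.
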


\subsection{Digital nets and sequences}

Let $b\ge2$ be an integer.

\begin{definition}
\label{def:level-elementary-interval}
For $\boldsymbol{k}=(k_1,\ldots,k_d)\in\mathbb N_0^d$ and
$\boldsymbol{a}=(a_1,\ldots,a_d)$ with $0\le a_j<b^{k_j}$, put
\[
 E(\boldsymbol{k},\boldsymbol{a})
 :=
 \prod_{j=1}^d
 \left[\frac{a_j}{b^{k_j}},\frac{a_j+1}{b^{k_j}}\right).
\]
This is called a level-$\boldsymbol{k}$ elementary interval in base $b$, and
$\mathcal E_{\boldsymbol{k}}$ denotes the collection of all such intervals.
Its volume is $b^{-|\boldsymbol{k}|}$, where
$|\boldsymbol{k}|=k_1+\cdots+k_d$.  A $b$-adic elementary interval is a
level-$\boldsymbol{k}$ elementary interval for some $\boldsymbol{k}$.
\end{definition}

\begin{definition}
Let $m,t\in\mathbb N_0$ with $t\le m$.  A $b^m$-point set $P$ in
$[0,1)^d$ is called a $(t,m,d)$-net in base $b$ if every $b$-adic elementary
interval of volume $b^{t-m}$ contains exactly $b^t$ points of $P$.
\end{definition}

\begin{definition}
An infinite sequence $\mathcal S=(\boldsymbol{x}_n)_{n\ge0}$ in $[0,1)^d$ is
called a $(t,d)$-sequence in base $b$ if, for every $k\ge0$ and every
$m\ge t$, the point set
$\{\boldsymbol{x}_n:kb^m\le n<(k+1)b^m\}$ is a $(t,m,d)$-net in base $b$.
\end{definition}

\begin{definition}[Tezuka's $(0,\boldsymbol{e},d)$-sequence~\cite{Te13}]
\label{def:zero-e-sequence}
Let $\boldsymbol{e}=(e_1,\ldots,e_d)\in\mathbb N^d$.  An infinite sequence
$\mathcal S=(\boldsymbol{x}_n)_{n\ge0}$ in $[0,1)^d$ is called a
$(0,\boldsymbol{e},d)$-sequence in base $b$ if, for every
$\boldsymbol{c}=(c_1,\ldots,c_d)\in\mathbb N_0^d$ and every
$r\in\mathbb N_0$, with
\[
 M=\sum_{j=1}^d e_jc_j,
 \qquad
 \boldsymbol{q}=(e_1c_1,\ldots,e_dc_d),
\]
the point set $\{\boldsymbol{x}_n:rb^M\le n<(r+1)b^M\}$ contains exactly one
point in each level-$\boldsymbol{q}$ elementary interval.
\end{definition}

By~\cite[Theorem~1]{Te13}, a generalized Niederreiter sequence constructed
from pairwise coprime polynomials $p_1,\ldots,p_d$ is a
$(0,\boldsymbol{e},d)$-sequence with $e_j=\deg p_j$.  Sobol' sequences and
original Niederreiter sequences are special cases.
See~\cite[Section~8.1]{DP10}.

It follows from Niederreiter's dispersion
bound~\cite[Theorem~6.10]{Nie92} that every $(t,m,d)$-net $P$ in base $b$, with
$N=b^m$, satisfies
\begin{equation}\label{eq:net-covering}
 h_\infty(P)
 \le b^{-\lfloor(m-t)/d\rfloor}
 \le b^{(d-1+t)/d}N^{-1/d}.
\end{equation}
Here Niederreiter's dispersion with respect to the maximum metric is precisely
the covering radius $h_\infty$ used in this paper.

Suppose now that $b$ is a prime power.  Fix a bijection
$\varphi:\mathbb F_b\to\{0,\ldots,b-1\}$ with $\varphi(0)=0$, and use it to
identify field elements with base-$b$ digits.  Let $n\in\mathbb N\cup\{\infty\}$, with $n\ge m$ when $n<\infty$,
and put $I_n=\{1,\ldots,n\}$ for finite $n$ and $I_\infty=\mathbb N$.
Given generating matrices $C_1,\ldots,C_d\in\mathbb F_b^{n\times m}$, where $\mathbb F_b^{\infty\times m}$ means
$\mathbb F_b^{\mathbb N\times m}$, proceed as follows.  For an integer $0\le a<b^m$, write
$a=a_0+a_1b+\cdots+a_{m-1}b^{m-1}$ and regard
$\boldsymbol{a}=(\varphi^{-1}(a_0),\ldots,\varphi^{-1}(a_{m-1}))^\top$
as an element of $\mathbb F_b^m$.  We usually index the resulting point by
the integer $a$.  In Section~\ref{sec:linear}, where the linear structure of
the input labels is used, we instead index it by the vector
$\boldsymbol{a}$ itself.  Write
\[
 C_j\boldsymbol{a}=(y_{a,j,r})_{r\in I_n}
\]
and define
\[
 x_{a,j}=\sum_{r\in I_n} \varphi(y_{a,j,r})b^{-r}.
\]  The resulting
$b^m$ points form the digital net generated by $C_1,\ldots,C_d$.  For finite
$n$, zero digits are appended below level $n$.  Thus $m$ is the number of input
digits, whereas $n$ is the output precision and need not equal $m$.

A digital sequence is generated by matrices
$C_1,\ldots,C_d\in\mathbb F_b^{\mathbb N\times\mathbb N}$.
The first $b^m$ points are obtained from the first $m$ columns, hence form a
digital net with generating matrices in $\mathbb F_b^{\infty\times m}$.
Whenever prefixes of digital points are discussed below, they refer to these
output-digit expansions.

\subsection{Randomizations}\label{subsec:randomizations}

We first define full Owen scrambling~\cite{Owe95}, also called Owen's nested
uniform scrambling.  It is defined for every integer base $b\ge2$.  Write
$\mathcal D_b=\{0,\ldots,b-1\}$.  Independently for each coordinate $j$ and
each finite input prefix $a_1\cdots a_{r-1}\in\mathcal D_b^{r-1}$, choose a
uniform random permutation $\pi_{j,a_1\cdots a_{r-1}}$ of $\mathcal D_b$, with
all these permutations mutually independent, and replace the input digits
$(a_r)_{r\ge1}$ by
\[
 \widetilde a_r
 =\pi_{j,a_1\cdots a_{r-1}}(a_r),
 \qquad r\ge1.
\]
Equivalently, it is a random automorphism of the rooted $b$-ary digit tree.

In the prime-power digital setting, let $\oplus$ denote digitwise addition
over $\mathbb F_b$, and write $\boldsymbol{y}_{n,j}\in\mathbb F_b^{\mathbb N}$
for the output digit vector of the $j$-th coordinate, using zero padding as
above.  For matrix-based scrambling, see
\cite{Matousek1998,Owen2003,DP10}.  For each coordinate $j$, let $L_j$ be an
independent random
infinite nonsingular lower triangular matrix whose entries below the diagonal
are independent and uniform over $\mathbb F_b$ and whose diagonal entries are
independent and uniform over $\mathbb F_b\setminus\{0\}$.  Given a deterministic
digital shift
$\boldsymbol{\delta}=(\boldsymbol{\delta}_1,\ldots,\boldsymbol{\delta}_d)$, the
matrix scrambling is defined by
\[
 \boldsymbol{y}_{n,j}
 \longmapsto
 L_j\boldsymbol{y}_{n,j}\oplus\boldsymbol{\delta}_j.
\]
For a digital point set $P_{b^m}$, write
$P_{b^m}^{\mathrm{mat},\boldsymbol{\delta}}$ for the resulting point set.
Probability statements for this model are with respect to the matrices.
An exceptional fixed shift may produce the endpoint $1$.  When the net
property is used for covering, the corresponding elementary intervals are
then understood through their closures.  This leaves all covering estimates
unchanged.
If $\boldsymbol{\delta}$ is
instead random, with all its digits independent and uniform over $\mathbb F_b$
and independent of the matrices, the same transformation is called a linear
scramble.  Write $P_{b^m}^{\mathrm{lin}}$ for the resulting point set.
Probability is then taken jointly over the matrices and the shift.
We write $P_{b^m}^{\mathrm{scr}}$ for either
$P_{b^m}^{\mathrm{mat},\boldsymbol{\delta}}$ with fixed
$\boldsymbol{\delta}$ or $P_{b^m}^{\mathrm{lin}}$.  Thus each statement about
$P_{b^m}^{\mathrm{scr}}$ represents two separate statements, one under the
matrix-scrambling law and one under the linear-scrambling law.
For a digital sequence, the same matrices, and the same shifts in the linear
case, are used at every level.  We use the analogous notation
$\mathcal S^{\mathrm{scr}}$ and $P_N^{\mathrm{scr}}$ for the scrambled
sequence and its initial point sets.

\section{Monte Carlo sampling, jittered sampling, and Latin hypercube sampling}\label{sec:random-sampling}

This section provides benchmark geometric scales for three standard random
sampling schemes.  We begin with independent Monte Carlo sampling, then turn
to jittered sampling, whose local cell structure anticipates the digital
arguments below, and finally consider random Latin hypercube sampling.  The
jittered model also connects this section to full Owen scrambling.  In
dimension one, the first $b^m$ points of any $(0,1)$-sequence after full Owen
scrambling form a jittered sample.  Thus the one-dimensional jittered result
applies directly to the model studied in Section~\ref{sec:owen}.

\begin{definition}
For $\alpha,\lambda>0$, let $W_{\alpha,\lambda}$ denote a Weibull random
variable with
\[
 \PP(W_{\alpha,\lambda}>t)=\exp(-\lambda t^\alpha),
 \qquad t\ge0.
\]
Here $\alpha$ is the shape parameter and $\lambda^{-1/\alpha}$ is the
scale parameter.
\end{definition}

\subsection{Monte Carlo sampling}\label{subsec:monte-carlo}

Let $\boldsymbol{X}_0,\dots,\boldsymbol{X}_{N-1}$ be independent uniformly distributed random points in
$[0,1]^d$, and put $P_N^{\mathrm{MC}}=(\boldsymbol{X}_i)_{0\le i<N}$ and
$R_N^{\mathrm{MC}}=R_\infty(P_N^{\mathrm{MC}})$.
The limit law for the minimum distance $R_N^{\mathrm{MC}}$ follows from the
Poisson approximation theorem of Schulte and
Th\"ale~\cite[Corollary~2]{SchulteThale2016}, applied with the $\ell^\infty$
distance, while the covering-radius estimate follows from Reznikov and
Saff~\cite[Corollary~2.3]{ReznikovSaff2016}, applied to the unit cube with the
Euclidean metric, together with equivalence of the Euclidean and
$\ell^\infty$ norms.  In particular, Monte Carlo sampling has a much smaller
minimum distance than the optimal scale $N^{-1/d}$.

\begin{theorem}
\label{thm:mc-geometry}
Let $d\ge1$ be fixed.  Then, as $N\to\infty$,
\[
 N^{2/d}R_N^{\mathrm{MC}}
 \xrightarrow{\mathrm d}
 W_{d,2^{d-1}},
 \qquad
 h_\infty(P_N^{\mathrm{MC}})
 =\Theta_{\PP}\left(\left(\frac{\log N}{N}\right)^{1/d}\right).
\]
Consequently,
\[
 R_N^{\mathrm{MC}}=\Theta_{\PP}(N^{-2/d}),
 \qquad
 \rho_\infty(P_N^{\mathrm{MC}})
 =\Theta_{\PP}\left(N^{1/d}(\log N)^{1/d}\right).
\]
\end{theorem}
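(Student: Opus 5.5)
The plan is to treat the two displayed assertions separately and then combine them. For the minimum distance, I will invoke the Poisson approximation theorem of Schulte and Th\"ale~\cite[Corollary~2]{SchulteThale2016}. The count
\[
 U_N(r)=\#\{0\le i<i'<N:\lVert\boldsymbol{X}_i-\boldsymbol{X}_{i'}\rVert_\infty\le r\}
\]
is a U-statistic of order two. Take $r=sN^{-2/d}$ with $s>0$ fixed. Boundary effects are negligible at this scale, so
\[
 \PP\bigl(\lVert\boldsymbol{X}_0-\boldsymbol{X}_1\rVert_\infty\le r\bigr)=(2r)^d(1+o(1)),
\]
and therefore
\[
 \EE[U_N(r)]=\binom N2 (2r)^d(1+o(1))\to 2^{d-1}s^d.
\]
The corollary gives $U_N(r)\to\mathrm{Poisson}(2^{d-1}s^d)$ in distribution. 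The only hypothesis that needs checking is that the local dependence term is negligible. This term counts triples $i,i',i''$ in which two pairs are both close, and its expectation is $O(N^3r^{2d})=O(N^{-1})\to0$. Since $\PP(N^{2/d}R_N^{\mathrm{MC}}>s)=\PP(U_N(r)=0)$, this yields the Weibull law $W_{d,2^{d-1}}$. A Weibull limit with no atom at $0$ or $\infty$ then immediately gives $R_N^{\mathrm{MC}}=\Theta_{\PP}(N^{-2/d})$.

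For the covering radius, I will apply Reznikov and Saff~\cite[Corollary~2.3]{ReznikovSaff2016} on $[0,1]^d$ with the Euclidean metric and uniform measure. This gives that the Euclidean covering radius is $\Theta_{\PP}((\log N/N)^{1/d})$ (indeed, asymptotically a constant times this rate in probability). Norm equivalence, $\lVert\cdot\rVert_\infty\le\lVert\cdot\rVert_2\le\sqrt d\lVert\cdot\rVert_\infty$, transfers the bound to $h_\infty$ with the same order.

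The main point to check is that the cited results apply to exactly our quantities. If there were doubt about the covering result, a direct argument also works.
\begin{itemize}
\item For the upper bound, partition $[0,1]^d$ into cubes of side $\asymp(C\log N/N)^{1/d}$ and apply a union bound. The probability that some cube is empty is at most $N\exp(-C\log N)\to0$ for $C>1$.
\item For the lower bound, take $\asymp N/(c\log N)$ disjoint cubes of side $(c\log N/N)^{1/d}$, with $c<1$. The number of empty cubes has mean $\to\infty$, and its variance is controlled via negative association. Lemma~\ref{lem:probabilistic-tools}(ii) then shows that some cube is empty with probability tending to $1$, and its centre is at $\ell^\infty$-distance at least half the side from all points.
\end{itemize}

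Finally, since $q_\infty=R_\infty/2$, the mesh ratio is $\rho_\infty=2h_\infty/R_N^{\mathrm{MC}}$. Both factors are $\Theta_{\PP}$ of their rates, so products and quotients of such quantities are $\Theta_{\PP}$ of the product and quotient of the rates. This gives
\[
 \rho_\infty(P_N^{\mathrm{MC}})=\Theta_{\PP}\bigl((\log N/N)^{1/d}N^{2/d}\bigr)=\Theta_{\PP}\bigl(N^{1/d}(\log N)^{1/d}\bigr).
\]
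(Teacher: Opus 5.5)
Your proposal is correct and takes the same route as the paper, which also obtains the Weibull limit from Schulte--Th\"ale's Corollary~2 with the $\ell^\infty$ distance, obtains the covering-radius order from Reznikov--Saff's Corollary~2.3 on the cube together with norm equivalence, and then combines the two. The paper states these steps only by citation, so your checks of the mean $2^{d-1}s^d$ and the $O(N^{-1})$ local-dependence term, and your direct occupancy argument for $h_\infty$, are accurate supplements rather than a different method.
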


\subsection{Jittered sampling}\label{subsec:jittered}

We next consider jittered sampling.  Its discrepancy properties have been
studied in~\cite{PausingerSteinerberger2016,Doerr2022}.  Here we consider instead
its closest-pair geometry.  Let $n\ge1$, set $N=n^d$, and divide
$[0,1]^d$ into the $N$ cubes
\[
Q_{\boldsymbol{i}}
=
\prod_{j=1}^d
\left[\frac{i_j}{n},\frac{i_j+1}{n}\right),
\qquad
 \boldsymbol{i}=(i_1,\dots,i_d)\in\{0,\dots,n-1\}^d.
\]
In each cube $Q_{\boldsymbol{i}}$ place one point $\boldsymbol{X}_{\boldsymbol{i}}$, independently and uniformly in
that cube.  Put
$P_N^{\mathrm{jit}}=(\boldsymbol{X}_{\boldsymbol{i}})_{\boldsymbol{i}\in\{0,\dots,n-1\}^d}$
and $R_N^{\mathrm{jit}}=R_\infty(P_N^{\mathrm{jit}})$.

\begin{theorem}\label{thm:jittered-minimum-distance}
Let $d\ge1$ be fixed and let $N=n^d$.  Then, as $n\to\infty$,
\[
 nN^{1/(d+1)}R_N^{\mathrm{jit}}
 \xrightarrow{\mathrm d}
 W_{d+1,d\,2^{d-2}}.
\]
Moreover, there is a constant $C_d>0$ such that, for every $n\ge2$ and
$0<\delta<1/2$,
\begin{equation}\label{eq:jittered-elementary-tails}
 \PP\left(R_N^{\mathrm{jit}}\le\frac{\delta}{n}\right)
 \le C_dN\delta^{d+1},
 \qquad
 \PP\left(R_N^{\mathrm{jit}}>\frac{\delta}{n}\right)
 \le \exp\left(-\frac16N\delta^{d+1}\right).
\end{equation}
In particular,
\[
 R_N^{\mathrm{jit}}
 =\Theta_{\PP}\left(n^{-1}N^{-1/(d+1)}\right)
 =\Theta_{\PP}\left(N^{-1/d-1/(d+1)}\right).
\]
\end{theorem}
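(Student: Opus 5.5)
The plan is to reduce everything to events for pairs of neighbouring cells, and then apply a Poisson approximation to the number of close neighbouring pairs.

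\emph{Only neighbouring cells matter.} Rescale by $n$, so each $\boldsymbol{X}_{\boldsymbol{i}}=(\boldsymbol{i}+\boldsymbol{U}_{\boldsymbol{i}})/n$ with $\boldsymbol{U}_{\boldsymbol{i}}$ i.i.d.\ uniform on $[0,1)^d$. If $\lVert\boldsymbol{i}-\boldsymbol{i}'\rVert_\infty\ge2$, then $\lVert\boldsymbol{X}_{\boldsymbol{i}}-\boldsymbol{X}_{\boldsymbol{i}'}\rVert_\infty\ge1/n>\delta/n$. So for $\delta<1/2$ only pairs with $\lVert\boldsymbol{i}-\boldsymbol{i}'\rVert_\infty=1$ can be at distance at most $\delta/n$.

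\emph{Pair probabilities.} Fix such a pair and let $k\ge1$ be the number of coordinates in which $\boldsymbol{i}$ and $\boldsymbol{i}'$ differ. The coordinates factorize.
\begin{itemize}
\item In a differing coordinate the condition reads $(1-u)+w\le\delta$, which has probability $\delta^2/2$.
\item In a common coordinate the condition reads $|u-w|\le\delta$, which has probability $2\delta-\delta^2\in[\delta,2\delta]$.
\end{itemize}
Hence the pair probability is $p_k(\delta)=(\delta^2/2)^k(2\delta-\delta^2)^{d-k}$.
\begin{itemize}
\item It is always at most $C\delta^{d+1}$.
\item For face neighbours ($k=1$) it is at least $\delta^{d+1}/2$ and equals $2^{d-2}\delta^{d+1}(1+O(\delta))$.
\item For $k\ge2$ it is $O(\delta^{d+2})$.
\end{itemize}

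\emph{Elementary tails \eqref{eq:jittered-elementary-tails}.} There are at most $3^dN$ neighbouring pairs, so the first bound follows from the union bound with $p_k\le C\delta^{d+1}$.

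For the second bound, pick disjoint face-adjacent pairs $\{(2r,\boldsymbol{i}_\ast),(2r+1,\boldsymbol{i}_\ast)\}$ along the first coordinate. There are $\lfloor n/2\rfloor n^{d-1}\ge N/3$ of them for $n\ge2$. Their close-pair events are independent, since they involve disjoint sets of points, and each has probability at least $\delta^{d+1}/2$. Therefore
\[
\PP(R_N^{\mathrm{jit}}>\delta/n)\le(1-\delta^{d+1}/2)^{N/3}\le\exp(-N\delta^{d+1}/6).
\]
With $\delta=cN^{-1/(d+1)}$ these two bounds give the $\Theta_{\PP}$ statement directly. The upper tail goes to zero as $c\to\infty$, and the lower tail is at most $C_dc^{d+1}$.

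\emph{Weibull limit.} Fix $t>0$ and put $\delta=tN^{-1/(d+1)}$. Let $Z$ be the number of neighbouring pairs $\{\boldsymbol{i},\boldsymbol{i}'\}$ with $\lVert\boldsymbol{X}_{\boldsymbol{i}}-\boldsymbol{X}_{\boldsymbol{i}'}\rVert_\infty\le\delta/n$. Then $\{nN^{1/(d+1)}R_N^{\mathrm{jit}}>t\}=\{Z=0\}$.

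The mean of $Z$ is computed as follows. There are $d(n-1)n^{d-1}=dN(1+o(1))$ face-adjacent pairs, each contributing $2^{d-2}\delta^{d+1}(1+o(1))$. The pairs with $k\ge2$ contribute $O(N\delta^{d+2})\to0$. Hence $\EE Z\to\lambda t^{d+1}$ with $\lambda=d\,2^{d-2}$.

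It remains to show $\PP(Z=0)\to e^{-\lambda t^{d+1}}$. I would use the Chen--Stein method with dependency neighbourhoods (Arratia--Goldstein--Gordon). A pair indicator is independent of all indicators of pairs sharing no cell with it, so the neighbourhood of a pair consists of the $O_d(1)$ pairs that share a cell. Then:
\begin{itemize}
\item $b_1\le O(N)\cdot O(1)\cdot(C\delta^{d+1})^2=O(N\delta^{2d+2})\to0$;
\item $b_3=0$;
\item $b_2$ sums $\PP(A\cap A')$ over distinct pairs $A,A'$ sharing a point.
\end{itemize}

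\emph{Main obstacle: the bound on $b_2$.} Here two neighbours of a common cell are both close to its point. Condition on the shared point $\boldsymbol{U}$; the two other points are then conditionally independent. For each of them, the conditional probability of being close is $O(\delta^{d-1}\cdot\delta\cdot g)$, where $g$ measures how close $\boldsymbol{U}$ is to the relevant face. Integrating over $\boldsymbol{U}$ should give $\PP(A\cap A')=O(\delta^{2d+1})$. The worst case is the same face from both sides, which is impossible for distinct cells, so different faces apply; even the crude bound $O(\delta^{2d+1})$ suffices. It yields $b_2=O(N\delta^{2d+1})=O(N^{-d/(d+1)})\to0$.

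Chen--Stein then gives $|\PP(Z=0)-e^{-\EE Z}|\to0$, which is the Weibull law $W_{d+1,d2^{d-2}}$.
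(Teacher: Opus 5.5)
Your proposal is correct and follows essentially the same route as the paper: you reduce to neighbouring cells, use the same pair probabilities $p_c(\delta)$, prove the two tails by the union bound and by disjoint face-adjacent pairs, and apply Chen--Stein with cell-sharing dependency neighbourhoods. The only difference is in $b_2$. You use the cruder bound $\delta(2\delta)^{2d}$, with a single face constraint on the shared point, and get $b_2=O(N^{-d/(d+1)})$; the paper shows the shared point must lie near two distinct faces, obtaining $\delta^2(2\delta)^{2d}$ and $b_2=O(N^{-1})$, and either bound suffices.
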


\begin{proof}
The case $t=0$ is immediate.  Fix $t>0$ and set
$r_N:=t/(nN^{1/(d+1)})$ and $\delta_N:=nr_N=tN^{-1/(d+1)}$.
By the definition of the Weibull distribution, the desired convergence is
obtained by evaluating $\PP(R_N^{\mathrm{jit}}>r_N)$ and proving that
\begin{equation}\label{eq:jittered-survival-target}
 \PP(R_N^{\mathrm{jit}}>r_N)
 \longrightarrow
 \exp\bigl(-d2^{d-2}t^{d+1}\bigr).
\end{equation}
We first establish the close-pair estimates for a general normalized threshold
$0<\delta<1/2$, corresponding to the distance threshold $\delta/n$.
Besides proving the finite-level bounds in
\eqref{eq:jittered-elementary-tails}, these estimates will later be applied
with $\delta=\delta_N$.

Let $\mathcal N_n$ be the set of unordered pairs of distinct grid cubes whose
closures intersect.  Equivalently,
\[
 \{Q_{\boldsymbol{i}},Q_{\boldsymbol{j}}\}\in\mathcal N_n
 \quad\Longleftrightarrow\quad
 \boldsymbol{i}-\boldsymbol{j}\in\{-1,0,1\}^d\setminus\{\boldsymbol{0}\}.
\]
For $e=\{Q_{\boldsymbol{i}},Q_{\boldsymbol{j}}\}\in\mathcal N_n$, let
$I_e(\delta)$ indicate that the two jittered points in these cubes are at
$\ell^\infty$-distance at most $\delta/n$, and put
\[
 W_N(\delta):=\sum_{e\in\mathcal N_n}I_e(\delta).
\]
Two cubes whose closures do not intersect are separated by at least $1/n$ in
some coordinate.  Hence
\begin{equation}\label{eq:jittered-zero-count}
 \left\{R_N^{\mathrm{jit}}>\frac{\delta}{n}\right\}
 =\{W_N(\delta)=0\}.
\end{equation}

Suppose that the indices of two neighboring cubes differ in exactly $c$
coordinates.  The number of such unordered pairs is
\[
 M_{n,c}
 =2^{c-1}\binom dc(n-1)^c n^{d-c}.
\]
Translate the two coordinate intervals and scale them by $n$.  In each of
the $c$ coordinates in which the cube indices differ, the intervals become
$[0,1)$ and $[1,2)$.  For independent uniform points $U$ and $V$ in these
intervals, the condition $|U-V|\le\delta$ cuts out a right triangle of area
$\delta^2/2$.  In each remaining coordinate, both points are uniform in the
same interval $[0,1)$, and the band $|U-V|\le\delta$ has area
$2\delta-\delta^2$.  Since the coordinates are independent, the
corresponding close-pair probability is
\begin{equation}\label{eq:jittered-local-probability}
 p_c(\delta)
 =\left(\frac{\delta^2}{2}\right)^c
  (2\delta-\delta^2)^{d-c},
\end{equation}
and therefore
\begin{equation}\label{eq:jittered-mean-general}
 \EE[W_N(\delta)]
 =\sum_{c=1}^d M_{n,c}p_c(\delta).
\end{equation}
Since $W_N(\delta)$ is nonnegative and integer-valued,
\eqref{eq:jittered-zero-count} and Markov's inequality give
\[
 \PP\left(R_N^{\mathrm{jit}}\le\frac{\delta}{n}\right)
 =\PP\bigl(W_N(\delta)\ge1\bigr)
 \le \EE[W_N(\delta)]
 \le C_dN\delta^{d+1},
\]
where the last inequality follows from
$p_c(\delta)\le C_d\delta^{d+c}$ and $M_{n,c}\le C_dN$.

For the opposite tail, for each
$(i_2,\dots,i_d)\in\{0,\dots,n-1\}^{d-1}$, pair consecutive cubes in the
first coordinate as
$\{Q_{(2k,i_2,\dots,i_d)},Q_{(2k+1,i_2,\dots,i_d)}\}$ for
$0\le k<\lfloor n/2\rfloor$.
This gives
$M_n=\lfloor n/2\rfloor n^{d-1}\ge N/3$ disjoint face-sharing pairs, whose
close-pair events are independent.  By
\eqref{eq:jittered-local-probability}, each occurs with probability
\[
 q_\delta
 =\frac{\delta^2}{2}(2\delta-\delta^2)^{d-1}
 \ge \frac12\delta^{d+1},
\]
since we have assumed $0<\delta<1/2$.
Therefore,
\[
 \PP\left(R_N^{\mathrm{jit}}>\frac{\delta}{n}\right)
 \le (1-q_\delta)^{M_n}
 \le \exp(-M_nq_\delta)
 \le \exp\left(-\frac16N\delta^{d+1}\right),
\]
which completes the proof of \eqref{eq:jittered-elementary-tails}.

We now apply these estimates at the normalized threshold $\delta_N$, which
corresponds to the critical radius $r_N=\delta_N/n$.  For all sufficiently
large $N$, $0<\delta_N<1/2$.  Hence, by \eqref{eq:jittered-zero-count}, it
remains to determine the limit of $\PP(W_N(\delta_N)=0)$.  Set
$\lambda_N:=\EE[W_N(\delta_N)]$.  The $c=1$ term in
\eqref{eq:jittered-mean-general} satisfies
\[
 M_{n,1}p_1(\delta_N)
 =d2^{d-2}N\delta_N^{d+1}(1+o(1))
 \longrightarrow d2^{d-2}t^{d+1},
\]
whereas, for every $c\ge2$,
\[
 M_{n,c}p_c(\delta_N)
 =O_{d,t}(N\delta_N^{d+c})
 =O_{d,t}\bigl(N^{(1-c)/(d+1)}\bigr)
 \longrightarrow0.
\]
Consequently,
\begin{equation}\label{eq:jittered-mean-limit}
 \lambda_N\longrightarrow \lambda:=d2^{d-2}t^{d+1}.
\end{equation}

It remains to show that $W_N(\delta_N)$ is asymptotically Poisson.  The indicators
$(I_e(\delta_N))_{e\in\mathcal N_n}$ have a dependency graph of degree bounded
only in terms of $d$.  Indeed, two indicators are independent whenever their
cube pairs are disjoint.  For
$e=\{Q_{\boldsymbol{i}},Q_{\boldsymbol{j}}\}$, let $B_e$ be the set of pairs in
$\mathcal N_n$ sharing a cube with $e$.  Thus $B_e$ consists of the pairs of the form
$\{Q_{\boldsymbol{i}},Q_{\boldsymbol{k}}\}$ or
$\{Q_{\boldsymbol{j}},Q_{\boldsymbol{k}}\}$ that belong to $\mathcal N_n$.
Define
\[
 b_1:=\sum_{e\in\mathcal N_n}\sum_{f\in B_e}
      \EE[I_e(\delta_N)]\EE[I_f(\delta_N)],
 \qquad
 b_2:=\sum_{e\in\mathcal N_n}
      \sum_{\substack{f\in B_e\\f\ne e}}
      \EE[I_e(\delta_N)I_f(\delta_N)].
\]
With the convention
$d_{\mathrm{TV}}(\mu,\nu)
:=\sup_{A\subset\mathbb Z_{\ge0}}|\mu(A)-\nu(A)|$, the total variation norm used
in~\cite[Theorem~1]{ArratiaGoldsteinGordon1989} is twice the distance above.
That theorem therefore gives
\begin{equation}\label{eq:jittered-chen-stein}
 d_{\mathrm{TV}}\bigl(\mathcal L(W_N(\delta_N)),\operatorname{Po}(\lambda_N)\bigr)
 \le b_1+b_2,
\end{equation}
because independence outside $B_e$ makes the third error term vanish.

Since each cube has at most $3^d-1$ neighboring cubes, we have
$|\mathcal N_n|\le 3^dN$ and $|B_e|\le 2(3^d-1)$ for every
$e\in\mathcal N_n$.
Moreover, by \eqref{eq:jittered-local-probability},
$\EE[I_e(\delta_N)]=O_{d,t}(N^{-1})$ uniformly in $e$, and hence
$b_1=O_{d,t}(N^{-1})$.

To estimate $b_2$, fix distinct $e,f\in\mathcal N_n$ with $f\in B_e$.
They share exactly one cube, so write
$e=\{Q_{\boldsymbol{i}},Q_{\boldsymbol{j}}\}$ and
$f=\{Q_{\boldsymbol{i}},Q_{\boldsymbol{k}}\}$.
Then
$\EE[I_e(\delta_N)I_f(\delta_N)]$
is the probability that both close-pair events occur.  Put
$\boldsymbol{u}=\boldsymbol{j}-\boldsymbol{i}$ and
$\boldsymbol{v}=\boldsymbol{k}-\boldsymbol{i}$, and define
\[
\mathcal F(\boldsymbol{u})
=\{(\ell,+)\mid u_\ell=1\}\cup\{(\ell,-)\mid u_\ell=-1\},
\]
with the analogous definition for $\mathcal F(\boldsymbol{v})$.  After
translating $Q_{\boldsymbol{i}}$ to $[0,1)^d$ and scaling by $n$, the point in
$Q_{\boldsymbol{i}}$ must lie in $[1-\delta_N,1)$ in coordinate $\ell$ when
$(\ell,+)$ belongs to either set, and in $[0,\delta_N)$ when $(\ell,-)$
belongs to either set.  The joint event is therefore impossible if the union
contains both $(\ell,+)$ and $(\ell,-)$ for some $\ell$, because
$\delta_N<1/2$.  Otherwise $\boldsymbol{u}$ and $\boldsymbol{v}$ are distinct
nonzero vectors, and hence
$|\mathcal F(\boldsymbol{u})\cup\mathcal F(\boldsymbol{v})|\ge2$, since equality with a singleton would force both vectors to be the same
signed coordinate vector.  The signed coordinates in the union then have
distinct coordinate indices.  Thus the admissible region for the point in
$Q_{\boldsymbol{i}}$ has volume at most $\delta_N^2$.  Once this point is fixed,
the admissible region for each of the points in $Q_{\boldsymbol{j}}$ and
$Q_{\boldsymbol{k}}$ has volume at most $(2\delta_N)^d$.  Since the three points
in the distinct jittered cubes are independent, we obtain
\[
 \EE[I_e(\delta_N)I_f(\delta_N)]
 \le \delta_N^2(2\delta_N)^{2d}
 \le C_d\delta_N^{2d+2}.
\]
Consequently,
\[
 \sup_{\substack{e\in\mathcal N_n,\ f\in B_e\\f\ne e}}
 \EE[I_e(\delta_N)I_f(\delta_N)]
 \le C_d\delta_N^{2d+2}=O_{d,t}(N^{-2}),
\]
and therefore
\[
 b_2
 \le |\mathcal N_n|\sup_{e\in\mathcal N_n}|B_e|
 \sup_{\substack{e\in\mathcal N_n,\ f\in B_e\\f\ne e}}
 \EE[I_e(\delta_N)I_f(\delta_N)]
 =O_{d,t}(N^{-1}).
\]
It follows from \eqref{eq:jittered-chen-stein} that
\[
 d_{\mathrm{TV}}\bigl(\mathcal L(W_N(\delta_N)),\operatorname{Po}(\lambda_N)\bigr)
 \longrightarrow0.
\]
Together with \eqref{eq:jittered-mean-limit}, this yields
\[
 \PP(R_N^{\mathrm{jit}}>r_N)
 =\PP(W_N(\delta_N)=0)
 =e^{-\lambda_N}+o(1)
 \longrightarrow
 \exp\bigl(-d2^{d-2}t^{d+1}\bigr),
\]
which proves \eqref{eq:jittered-survival-target} and the desired convergence.
\end{proof}

The covering lower bound in~\eqref{eq:universal-geometric-bounds}
and the fact that each grid cube contains one point give
$1/(2n)\le h_\infty(P_N^{\mathrm{jit}})\le 1/n$.
Combining this with Theorem~\ref{thm:jittered-minimum-distance} gives the following.

\begin{corollary}
For jittered sampling in $[0,1]^d$,
\[
 h_\infty(P_N^{\mathrm{jit}})\asymp_d N^{-1/d},
 \qquad
 \rho_\infty(P_N^{\mathrm{jit}})
 =\Theta_{\PP}\left(N^{1/(d+1)}\right).
\]
\end{corollary}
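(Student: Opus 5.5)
The argument combines the deterministic two-sided bound on the covering radius with the probabilistic order of the minimum distance from Theorem~\ref{thm:jittered-minimum-distance}.

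For the covering radius, every grid cube $Q_{\boldsymbol{i}}$ has side $1/n$ and contains a jittered point. Hence every $\boldsymbol{z}\in[0,1]^d$ lies in the closure of some cube, and is within $\ell^\infty$-distance $1/n$ of that cube's point; so $h_\infty(P_N^{\mathrm{jit}})\le 1/n=N^{-1/d}$. The universal bound \eqref{eq:universal-geometric-bounds} gives $h_\infty(P_N^{\mathrm{jit}})\ge 1/(2N^{1/d})$. Together these give $h_\infty(P_N^{\mathrm{jit}})\asymp_d N^{-1/d}$ deterministically, for every realization.

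For the mesh ratio, write $\rho_\infty=2h_\infty/R_N^{\mathrm{jit}}$. Theorem~\ref{thm:jittered-minimum-distance} gives $R_N^{\mathrm{jit}}=\Theta_{\PP}(n^{-1}N^{-1/(d+1)})$. Since $h_\infty$ lies between $1/(2n)$ and $1/n$ deterministically, we have
\[
 \frac{1}{nR_N^{\mathrm{jit}}}\le\rho_\infty(P_N^{\mathrm{jit}})\le\frac{2}{nR_N^{\mathrm{jit}}}.
\]
It therefore suffices that $nR_N^{\mathrm{jit}}=\Theta_{\PP}(N^{-1/(d+1)})$.

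This follows directly from the tail bounds \eqref{eq:jittered-elementary-tails}, applied with $\delta=\varepsilon N^{-1/(d+1)}$ and $\delta=KN^{-1/(d+1)}$:
\begin{itemize}
\item The first bound gives $\PP\bigl(nR_N^{\mathrm{jit}}\le \varepsilon N^{-1/(d+1)}\bigr)\le C_d\varepsilon^{d+1}$. This is small for small $\varepsilon$, which yields the upper bound $O_{\PP}(N^{1/(d+1)})$ on $\rho_\infty$.
\item The second bound gives $\PP\bigl(nR_N^{\mathrm{jit}}>KN^{-1/(d+1)}\bigr)\le \exp(-K^{d+1}/6)$. This is small for large $K$, valid once $N$ is large enough that $K N^{-1/(d+1)}<1/2$, and yields the lower bound $\Omega_{\PP}(N^{1/(d+1)})$.
\end{itemize}
Alternatively, the Weibull limit law already gives both bounds, since the limit has no atom at $0$ or $\infty$. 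No step is a real obstacle. The only point needing care is to index along $N=n^d$ and to note that $\rho_\infty$ is finite almost surely, because the points are almost surely distinct.
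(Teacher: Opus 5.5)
Your proposal is correct and follows the paper's own argument: you combine the deterministic sandwich $1/(2n)\le h_\infty(P_N^{\mathrm{jit}})\le 1/n$ with the order $R_N^{\mathrm{jit}}=\Theta_{\PP}(n^{-1}N^{-1/(d+1)})$ from Theorem~\ref{thm:jittered-minimum-distance}. The sandwich comes from the universal covering bound and the one-point-per-cube structure. Your re-derivation of that order from the tail bounds \eqref{eq:jittered-elementary-tails} is accurate, but it is redundant because the theorem already states it.
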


\subsection{Random Latin hypercube sampling}\label{subsec:latin-hypercube}

We finally consider standard random Latin hypercube sampling, introduced by
McKay, Beckman and Conover~\cite{McKayBeckmanConover1979}.  See also
Stein~\cite{Stein1987} for large-sample properties.  Let
$\pi_1,\dots,\pi_d$ be independent uniformly distributed permutations of
$\{0,\dots,N-1\}$, and let $U_{i,j}$ be independent uniform random variables
on $[0,1)$.  Define
\begin{equation}\label{eq:lhs-construction}
\boldsymbol{X}_i=\left(\frac{\pi_1(i)+U_{i,1}}{N},\dots,
          \frac{\pi_d(i)+U_{i,d}}{N}\right),
\qquad i=0,\dots,N-1,
\end{equation}
and put $P_N^{\mathrm{LHS}}=(\boldsymbol{X}_i)_{0\le i<N}$ and
$R_N^{\mathrm{LHS}}=R_\infty(P_N^{\mathrm{LHS}})$.
The Latin hypercube constraint gives exact one-dimensional stratification, but
it does not impose one point in each $d$-dimensional cube at the natural scale
$N^{-1/d}$.  As a consequence, for fixed dimension $d\ge2$ its mesh ratio has
the same order as independent Monte Carlo sampling.

\begin{lemma}
\label{lem:lhs-one-coordinate-close}
Let $N\ge2$, and let $Y=(A+U)/N$ and $Z=(B+V)/N$, where $(A,B)$ is a
uniformly distributed ordered pair of distinct elements of
$\{0,\dots,N-1\}$ and $U,V$ are independent uniform random variables on
$[0,1)$.  Put $p_N(r)=\PP(|Y-Z|\le r)$.  Then, for $0\le r\le1$,
\[
p_N(r)=
\begin{cases}
 Nr^2, & 0\le r\le N^{-1},\\[1mm]
 \displaystyle\frac{2r-r^2-N^{-1}}{1-N^{-1}},
   & N^{-1}\le r\le1.
\end{cases}
\]
In particular,
\begin{equation}\label{eq:lhs-one-coordinate-close}
 p_N(r)\le 2r
 \qquad (r\ge0).
\end{equation}
\end{lemma}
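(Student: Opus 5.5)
The plan is to avoid conditioning on the difference $A-B$ directly, and instead compare with the unconstrained model in which the two integer labels are drawn independently. Let $A',B'$ be independent and uniform on $\{0,\dots,N-1\}$, independent of $U,V$, and put $Y'=(A'+U)/N$ and $Z'=(B'+V)/N$. Then $Y',Z'$ are independent and uniform on $[0,1)$, so for $0\le r\le1$
\[
 \PP(|Y'-Z'|\le r)=2r-r^2 .
\]
Conditioning on whether $A'=B'$ splits this probability into two pieces:
\[
 2r-r^2=\Bigl(1-\frac1N\Bigr)p_N(r)+\frac1N\,\PP\bigl(|U-V|\le Nr\bigr).
\]
This works because, conditional on $A'\ne B'$, the pair $(A',B')$ is a uniform ordered pair of distinct labels and is independent of $(U,V)$. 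So this conditional law is exactly the law in the statement. On $\{A'=B'\}$ we have $|Y'-Z'|=|U-V|/N$.

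Next, write $s=Nr$. For the same-label term, $\PP(|U-V|\le s)=2s-s^2$ when $0\le s\le1$, and it equals $1$ when $s\ge1$. Solving the identity for $p_N(r)$ then gives the two cases.
\begin{itemize}
\item For $r\ge N^{-1}$, we get $p_N(r)=(2r-r^2-N^{-1})/(1-N^{-1})$ immediately.
\item For $r\le N^{-1}$, the numerator is
\[
 2r-r^2-\frac{2Nr-N^2r^2}{N}=(N-1)r^2 .
\]
Dividing by $1-N^{-1}$ gives $Nr^2$.
\end{itemize}

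For the bound \eqref{eq:lhs-one-coordinate-close}, I would again split into cases.
\begin{itemize}
\item If $r\le N^{-1}$, then $Nr^2\le r\le 2r$.
\item If $N^{-1}\le r\le1$, the inequality $2r-r^2-N^{-1}\le 2r(1-N^{-1})$ is equivalent to
\[
 -\Bigl(r-\frac1N\Bigr)^2-\frac1N+\frac1{N^2}\le0,
\]
which holds because $N\ge2$.
\item If $r>1$, then trivially $p_N(r)\le1<2r$.
\end{itemize}

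There is no real obstacle here. The only point needing care is to justify that conditioning the independent-label model on $A'\ne B'$ reproduces exactly the law of $(A,B,U,V)$ in the lemma. This follows because $(A',B')$ is uniform on $\{0,\dots,N-1\}^2$ and independent of $(U,V)$.
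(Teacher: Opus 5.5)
Your proof is correct and follows essentially the same route as the paper. Both arguments realize the law of $(Y,Z)$ by conditioning two independent uniform points of $[0,1)$ on lying in distinct strata. The only differences are minor. For $r\le N^{-1}$ the paper computes directly from adjacent strata (probability $2/N$ times the triangle area $(Nr)^2/2$), whereas you subtract the same-stratum term $N^{-1}(2Nr-N^2r^2)$ from $2r-r^2$. You also write out the verification of $p_N(r)\le 2r$, which the paper leaves as immediate.
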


\begin{proof}
Let $\widetilde Y,\widetilde Z$ be independent uniform random variables on
$[0,1)$, and set
$\widetilde A=\lfloor N\widetilde Y\rfloor$ and
$\widetilde B=\lfloor N\widetilde Z\rfloor$.  Conditional on
$\widetilde A\ne\widetilde B$, the ordered pair
$(\widetilde A,\widetilde B)$ is uniform over distinct stratum labels, while
the positions within the two strata remain independent and uniform.  Hence
$(Y,Z)$ has the same law as $(\widetilde Y,\widetilde Z)$ under this
conditioning, and
\[
 p_N(r)
 =\PP\bigl(|\widetilde Y-\widetilde Z|\le r
      \mid \widetilde A\ne\widetilde B\bigr)
 =\frac{\PP\bigl(|\widetilde Y-\widetilde Z|\le r,
                  \ \widetilde A\ne\widetilde B\bigr)}{1-N^{-1}},
\]
since $\PP(\widetilde A\ne\widetilde B)=1-N^{-1}$.

If $0\le r\le N^{-1}$, closeness is possible only for adjacent strata.
Since $\PP(|A-B|=1)=2/N$ and, conditionally on $B=A+1$,
$|Y-Z|\le r$ is equivalent to $U-V\ge1-Nr$, we obtain
\[
 p_N(r)=\frac{2}{N}\frac{(Nr)^2}{2}=Nr^2.
\]
If $N^{-1}\le r\le1$, then
$\{\widetilde A=\widetilde B\}\subset
\{|\widetilde Y-\widetilde Z|\le r\}$, because a stratum has length
$N^{-1}$.  Therefore
\[
 p_N(r)=
 \frac{\PP(|\widetilde Y-\widetilde Z|\le r)-\PP(\widetilde A=\widetilde B)}{1-N^{-1}}
 =\frac{2r-r^2-N^{-1}}{1-N^{-1}}.
\]
The bound \eqref{eq:lhs-one-coordinate-close} follows immediately from
these formulas.  For $r>1$, it is trivial.
\end{proof}

\begin{theorem}
\label{thm:lhs-geometry}
Let $d\ge2$ be fixed.  Then
\[
R_N^{\mathrm{LHS}}=\Theta_{\PP}(N^{-2/d}),
\qquad
h_\infty(P_N^{\mathrm{LHS}})
=\Theta_{\PP}\left(\left(\frac{\log N}{N}\right)^{1/d}\right),
\]
and consequently
\[
\rho_\infty(P_N^{\mathrm{LHS}})
=\Theta_{\PP}\left(N^{1/d}(\log N)^{1/d}\right).
\]
\end{theorem}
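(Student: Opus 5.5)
We handle the separation radius and the covering radius separately, each by first- and second-moment counting. The mesh-ratio statement then follows at once, since $\rho_\infty=2h_\infty/R_\infty$ and $\Theta_{\PP}$ bounds multiply.

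\emph{Separation radius.} The key fact is that the coordinates of distinct points decouple. For $i\ne i'$, the pair $(\pi_j(i),\pi_j(i'))$ is a uniform ordered pair of distinct labels, independently over $j$, so by Lemma~\ref{lem:lhs-one-coordinate-close}
\[
\PP(\lVert\boldsymbol{X}_i-\boldsymbol{X}_{i'}\rVert_\infty\le r)=p_N(r)^d .
\]

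\emph{Upper bound on $R_N^{\mathrm{LHS}}$.} Put $r=sN^{-2/d}$ and let $Z$ be the number of pairs $\{i,i'\}$ at distance at most $r$. For $d\ge3$ we have $r\le N^{-1}$ once $N$ is large. For $d=2$ and $s\ge1$ we use the lower form of $p_N$ instead. In all cases $p_N(r)\asymp r$, so $\EE[Z]\asymp N^2r^d=s^d$. For Lemma~\ref{lem:probabilistic-tools}(ii) we need the variance. Pairs sharing one index contribute $O(N^3r^{2d})=O(s^{2d}/N)$. Disjoint pairs $\{i,i'\},\{k,l\}$ are not independent, because the permutations couple them, but conditioning on four distinct labels per coordinate changes each one-coordinate probability only by a factor $1+O(1/N)$. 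Hence their covariance is $O(N^{-1})$ times the product of the two probabilities, and their total contribution is $O(s^{2d}/N)$. Consequently
\[
\PP(Z=0)\le\frac{1}{\EE[Z]}+O(N^{-1}s^{2d}+\ldots),
\]
which is small for large fixed $s$, giving $R_N^{\mathrm{LHS}}=O_{\PP}(N^{-2/d})$.

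\emph{Lower bound on $R_N^{\mathrm{LHS}}$.} Markov's inequality with $p_N(r)\le2r$ from \eqref{eq:lhs-one-coordinate-close} gives
\[
\PP(R_N^{\mathrm{LHS}}\le sN^{-2/d})\le N^2(2sN^{-2/d})^d=2^ds^d,
\]
which is small for small $s$.

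\emph{Covering radius, upper bound.} Partition $[0,1]^d$ into cubes of side $\ell=(K\log N/N)^{1/d}$, adjusted so the side length divides evenly. Then $h_\infty\le\ell$ unless some cube is empty, and a union bound handles the probability of an empty cube. For an axis box $B=\prod_j I_j$ with each $I_j$ a union of $L$ strata, the number of points in $B$ is a sum over $i$ of indicators, and $\PP(B\text{ empty})$ equals the probability that $d$ independent uniform $L$-subsets of $\{0,\dots,N-1\}$ have no common index pattern $(\pi_1(i),\dots,\pi_d(i))$. I would bound this by $\exp(-cNL^d/N^d)$. To do so, reveal $\pi_1$, restrict to the $L$ indices $i$ with $\pi_1(i)\in I_1$, and sample the others sequentially, using negative association of permutation indicators. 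With $L^d/N^{d-1}=K\log N$, the union over $O(N/\log N)$ cubes is $o(1)$ for large $K$.

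\emph{Covering radius, lower bound.} Choose $\ell=(c\log N/N)^{1/d}$ and count the empty cubes among $M\asymp N/(c\log N)$ disjoint cubes of side $2\ell$. Each is empty with probability about $\exp(-2^dc\log N)=N^{-2^dc}$, so the expected count is $N^{1-2^dc-o(1)}\to\infty$ for small $c$. A second moment on this count, again using near-independence for disjoint cubes under permutation coupling, shows that some cube is empty with high probability. An empty cube of side $2\ell$ forces $h_\infty\ge\ell$.

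\emph{Main obstacle.} The hardest step is controlling dependence induced by the permutations: both the $1+O(1/N)$ covariance estimates for disjoint pairs and the sharp emptiness probabilities, with their pairwise near-independence, for boxes in the covering argument. I would first try the coupling that realizes the LHS by drawing i.i.d.\ uniform coordinates and conditioning on distinct strata, as in the proof of Lemma~\ref{lem:lhs-one-coordinate-close}. Failing that, I would use hypergeometric-type tail bounds, via negative association, coordinate by coordinate.
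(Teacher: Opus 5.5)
\textbf{Covering-radius lower bound: genuine gap.} Everything else in your proposal works. The separation lower bound is the paper's union bound. Your separation upper bound is a valid variant of the paper's second moment. You count pairs at true distance $\le r$, jitters included. The paper instead relabels so that $\pi_1=\mathrm{id}$ and counts only label-close pairs with $|i-\ell|\le H$ and $|\pi_j(i)-\pi_j(\ell)|\le H$, which makes the covariances purely combinatorial. Your disjoint-pair bound is fine: removing the two occupied labels only deletes nonnegative terms, so each one-coordinate joint probability is at most $p_N(r)^2(1+O(N^{-1}))$. Correct one slip, though. For $d\ge3$, $r=sN^{-2/d}\ge N^{-1}$ for large $N$, not $\le$, so it is the second branch of Lemma~\ref{lem:lhs-one-coordinate-close} that gives $p_N(r)\asymp r$; the first branch would give $Nr^2$. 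Your covering upper bound is essentially the paper's negative-association argument.

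The gap is the second-moment step over a full grid of cubes, which rests on ``near-independence for disjoint cubes''. You do not prove this, and negative association does not supply it. Negative association only decorrelates monotone functions of \emph{disjoint} blocks of a negatively associated family, but all grid cubes are built from the same permutations $\pi_1,\dots,\pi_d$.
\begin{itemize}
\item If two cubes share their projection onto a set $J$ of coordinates, both emptiness events depend on the common random set $\bigcap_{j\in J}\pi_j^{-1}(I_j)$. Conditionally on it, both are decreasing in its size, so this shared size is a source of positive correlation.
\item If they are disjoint in two or more coordinates, they depend on disjoint entries $\chi(\pi_j(i)=k)$. These entries are not negatively associated: already $\{\pi(0)\ne0\}$ and $\{\pi(1)\ne1\}$ are positively correlated for a uniform permutation. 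In $d=2$ with $\pi_1=\mathrm{id}$, these are exactly the emptiness events of the disjoint cells $[0,1/N)^2$ and $[1/N,2/N)^2$.
\end{itemize}
You would therefore need a quantitative bound $\PP(E_Q\cap E_{Q'})\le(1+o(1))\PP(E_Q)\PP(E_{Q'})$ over all pair types. You would also need a matching lower bound on each $\PP(E_Q)$, since negative association gives only the upper bound $\exp(-N\prod_j a_j)$. Neither is supplied.

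The paper avoids this by giving up the full grid. After relabeling so that $\pi_1=\mathrm{id}$, it uses only the single row $Q_s=[sr/N,(s+1)r/N)\times[0,\alpha)^{d-1}$, $0\le s<K\asymp a^{-1}$. Then $Q_s$ is empty iff $T\cap S_s=\emptyset$, where $T=A_2\cap\cdots\cap A_d$ is one common random index set and the $S_s$ are disjoint deterministic index blocks. The emptiness indicators are thus monotone functions of disjoint blocks of the negatively associated family $(\chi(i\notin T))_i$, so $\operatorname{Var}(Z)\le\EE[Z]$ with no covariance estimate at all. The single-cube lower bound $(1-\mu^{-1})e^{-8N\alpha^d}$ comes from three facts:
\begin{itemize}
\item conditional on $M=|T|$, the set $T$ is uniform among subsets of that size;
\item $\operatorname{Var}(M)\le\mu$, so $M\le 2\mu$ with high probability;
\item the hypergeometric product formula for $\PP(T\cap S_s=\emptyset\mid M)$.
\end{itemize}
Only $\asymp a^{-1}$ cubes are used, but $\EE[Z]\gtrsim a^{-1}N^{-8c^d}\to\infty$ once $8c^d<1/d$, which is all that is needed.
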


We prove the minimum-distance and covering-radius estimates separately.
The mesh-ratio estimate then follows immediately.

\begin{proof}[Proof of the minimum-distance estimate]
Fix distinct sample indices $i$ and $\ell$.  In each coordinate,
$(X_{i,j},X_{\ell,j})$ has the distribution considered in
Lemma~\ref{lem:lhs-one-coordinate-close}, and these $d$ coordinate pairs are
independent.  Hence
\[
 \PP\bigl(\lVert\boldsymbol{X}_i-\boldsymbol{X}_\ell\rVert_\infty\le r\bigr)
 =p_N(r)^d
 \le (2r)^d.
\]
For $c>0$, put $r_N=cN^{-2/d}$.  By the union bound,
\[
\PP(R_N^{\mathrm{LHS}}\le r_N)
 \le \binom{N}{2}\,p_N(r_N)^d
 \le 2^{d-1}c^d.
\]
Letting $c\downarrow0$ gives
\[
R_N^{\mathrm{LHS}}=\Omega_{\PP}(N^{-2/d}).
\]

For the reverse bound, by relabeling the sample indices in
\eqref{eq:lhs-construction}, we may assume that $\pi_1=\mathrm{id}$.  The
relabeled jitters remain independent and uniform, and
$\pi_2,\dots,\pi_d$ remain independent uniformly distributed permutations.
For an integer $1\le H\le N/2$, let
\[
\mathcal E_H
=
\bigl\{\{i,\ell\}:0\le i<\ell<N,\ |i-\ell|\le H\bigr\}.
\]
Then
\[
 |\mathcal E_H|
 =HN-\frac{H(H+1)}2
 \asymp NH,
\]
and, for a uniform permutation $\pi$ and a fixed pair $i\ne\ell$,
\[
 p_H
 :=\PP\bigl(|\pi(i)-\pi(\ell)|\le H\bigr)
 =\frac{2|\mathcal E_H|}{N(N-1)}
 \asymp \frac HN.
\]
For $e=\{i,\ell\}\in\mathcal E_H$, let $X_e$ indicate that
$|\pi_j(i)-\pi_j(\ell)|\le H$ for $j=2,\dots,d$, and put
\[
Z_H=\sum_{e\in\mathcal E_H}X_e.
\]
If $Z_H\ge1$, then the
corresponding two sample points are at distance less than $(H+1)/N$ in every
coordinate.  Hence Lemma~\ref{lem:probabilistic-tools}(ii) gives
\begin{equation}\label{eq:lhs-min-distance-probability}
 \PP\left(R_N^{\mathrm{LHS}}<\frac{H+1}{N}\right)
 \ge \PP(Z_H\ge1)
 =1-\PP(Z_H=0)
 \ge 1-\frac{\operatorname{Var}(Z_H)}{(\EE[Z_H])^2}.
\end{equation}
Thus it suffices to estimate $\EE[Z_H]$ and $\operatorname{Var}(Z_H)$.  We first note that
\[
 \EE[Z_H]
 =|\mathcal E_H|p_H^{d-1}
 \asymp_d H^dN^{2-d}.
\]
For the variance, write
\[
 \operatorname{Var}(Z_H)
 =\sum_{e\in\mathcal E_H}\operatorname{Var}(X_e)
 +2\sum_{\substack{e,f\in\mathcal E_H\\ e<f}}
 \operatorname{Cov}(X_e,X_f).
\]
The diagonal sum is at most $\EE[Z_H]$.  We split the remaining pairs according
to whether $e$ and $f$ share an endpoint.

Suppose first that $e=\{i,\ell\}$ and $f=\{i,k\}$ with
$\ell\ne k$.  For one permutation, after fixing $\pi(i)$, each of
$\pi(\ell)$ and $\pi(k)$ has at most $2H$ admissible values.  Hence the two
closeness conditions hold with probability $O((H/N)^2)$.  Since the
permutations $\pi_2,\ldots,\pi_d$ are independent,
$\EE[X_eX_f]=O_d((H/N)^{2(d-1)})$.
Each edge has at most $O(H)$ other edges sharing an endpoint, while
$|\mathcal E_H|=O(NH)$.  Thus there are $O(NH^2)$ such pairs, whose total
contribution is
\begin{equation}\label{eq:lhs-variance-shared-endpoint}
 O_d\left(NH^2(H/N)^{2(d-1)}\right)
 =O_d\left(N^{-1}(\EE[Z_H])^2\right).
\end{equation}

Now suppose that $e=\{i,\ell\}$ and $f=\{k,m\}$ are disjoint.  For one
uniform permutation $\pi$,
\[
 \PP\bigl(|\pi(i)-\pi(\ell)|\le H,\ |\pi(k)-\pi(m)|\le H\bigr)
 =\frac{(2|\mathcal E_H|)^2+O(|\mathcal E_H|H)}{N(N-1)(N-2)(N-3)}
 =p_H^2\bigl(1+O(N^{-1})\bigr).
\]
Hence independence of $\pi_2,\ldots,\pi_d$ gives
\[
 \EE[X_eX_f]
 =p_H^{2(d-1)}\bigl(1+O_d(N^{-1})\bigr),
 \qquad
 \operatorname{Cov}(X_e,X_f)
 =O_d\left(N^{-1}p_H^{2(d-1)}\right).
\]
There are $O(|\mathcal E_H|^2)$ disjoint pairs, so their total contribution is
\begin{equation}\label{eq:lhs-variance-disjoint}
 O_d\left(N^{-1}|\mathcal E_H|^2p_H^{2(d-1)}\right)
 =O_d\left(N^{-1}(\EE[Z_H])^2\right).
\end{equation}
Combining the diagonal bound $\sum_e\operatorname{Var}(X_e)\le\EE[Z_H]$
with \eqref{eq:lhs-variance-shared-endpoint} and
\eqref{eq:lhs-variance-disjoint} gives
\[
 \operatorname{Var}(Z_H)
 \le C_d\left(\EE[Z_H]+N^{-1}(\EE[Z_H])^2\right).
\]
Fix $C\ge1$ and set
$H=\lfloor C N^{1-2/d}\rfloor$.  Then
$\EE[Z_H]\asymp_d C^d$, and the preceding variance estimate gives
\[
\frac{\operatorname{Var}(Z_H)}{(\EE[Z_H])^2}
\le C_d\left(C^{-d}+N^{-1}\right).
\]
Applying \eqref{eq:lhs-min-distance-probability} therefore yields
\[
\limsup_{N\to\infty}
\PP\left(R_N^{\mathrm{LHS}}>(C+1)N^{-2/d}\right)
\le C_dC^{-d}.
\]
Letting $C\to\infty$ gives
\[
R_N^{\mathrm{LHS}}=O_{\PP}(N^{-2/d}).\qedhere
\]
\end{proof}

\begin{proof}[Proof of the covering-radius estimate]
For the upper bound, let $Q=\prod_{j=1}^d I_j$, where each
$I_j$ is a union of $r_j$ consecutive Latin hypercube strata, and put
$a_j=r_j/N$.  Write
\[
 B_{i,j}=\chi(X_{i,j}\in I_j),
 \qquad
 J_i=\prod_{j=1}^d B_{i,j},
 \qquad 0\le i<N.
\]
For each $j$, $(B_{i,j})_{0\le i<N}$ is the indicator vector of a uniform
$r_j$-subset and is negatively associated
by~\cite[Theorem~2.11]{JoagDevProschan1983}.  The coordinate families are
independent, and negative association is preserved under independent unions
and coordinatewise monotone functions of disjoint blocks
by~\cite[Properties~P6 and~P7]{JoagDevProschan1983}.  Hence $(J_i)_{0\le i<N}$ is
negatively associated.  The same is true of $(1-J_i)_{0\le i<N}$, and the
marginal probability bound~\cite[Proposition~4]{DubhashiRanjan1998} gives
\begin{equation}\label{eq:lhs-empty-box}
\begin{aligned}
 \PP(P_N^{\mathrm{LHS}}\cap Q=\emptyset)
 &=\PP(J_i=0\text{ for all }i)\\
 &\le\prod_{i=0}^{N-1}\PP(J_i=0)
 =\left(1-\prod_{j=1}^d a_j\right)^N
 \le\exp\left(-N\prod_{j=1}^d a_j\right).
\end{aligned}
\end{equation}
Take $a=(\log N/N)^{1/d}$ and partition each coordinate axis into blocks of
consecutive strata, each containing between $\lceil aN\rceil$ and
$2\lceil aN\rceil$ strata.  Let $\mathcal Q_N$ be the resulting collection of
boxes.  Then $|\mathcal Q_N|\le C_d a^{-d}$ and every $Q\in\mathcal Q_N$
satisfies $a_j\ge a$.  Hence \eqref{eq:lhs-empty-box} and the union bound give
\[
 \PP\bigl(\exists Q\in\mathcal Q_N:\ P_N^{\mathrm{LHS}}\cap Q=\emptyset\bigr)
 \le C_d a^{-d}e^{-Na^d}
 =\frac{C_d}{\log N}
 \longrightarrow0.
\]
The boxes have diameters $O_d(a)$, which proves the upper bound.

For the lower bound, we look for an empty cube of side length of order
$(\log N/N)^{1/d}$.  Choose a constant $c>0$, whose value will be specified
below, and put
\[
 a=c\left(\frac{\log N}{N}\right)^{1/d},
 \qquad r=\lfloor aN\rfloor,
 \qquad \alpha=\frac rN,
 \qquad K=\left\lfloor\frac Nr\right\rfloor.
\]
Relabeling the sample indices in \eqref{eq:lhs-construction}, assume that
$\pi_1=\mathrm{id}$.  For $0\le s<K$, define
\[
 S_s=\{sr,\ldots,(s+1)r-1\}
 \quad\text{and}\quad
 Q_s=\left[\frac{sr}{N},\frac{(s+1)r}{N}\right)
       \times[0,\alpha)^{d-1}.
\]
The cubes $Q_s$ are disjoint and have side length $\alpha$.  For
$2\le j\le d$, let
\[
 A_j=\{i:\pi_j(i)<r\},
 \qquad
 T=A_2\cap\cdots\cap A_d,
 \qquad M=|T|.
\]
Then
\begin{equation}\label{eq:lhs-empty-block-equivalence}
 P_N^{\mathrm{LHS}}\cap Q_s=\emptyset
 \quad\Longleftrightarrow\quad
 T\cap S_s=\emptyset.
\end{equation}
Define the number of empty cubes by
\[
 Z=\sum_{s=0}^{K-1}
   \chi(P_N^{\mathrm{LHS}}\cap Q_s=\emptyset)
  =\sum_{s=0}^{K-1}
   \chi(T\cap S_s=\emptyset).
\]
If $Z\ge1$, the center of an empty cube is at $\ell^\infty$-distance at
least $\alpha/2$ from every sample point.  Hence it suffices to prove
$\PP(Z=0)\to0$.  For all sufficiently large $N$,
$a/2\le\alpha\le a$ and $K\ge(2\alpha)^{-1}$.
The sets $A_2,\ldots,A_d$ are independent uniform $r$-subsets, so
$\mu:=\EE[M]=N\alpha^{d-1}$.  Since $d\ge2$, we have $\mu\to\infty$.
By the same negative-association closure property used above, the indicators
$(\chi(i\in T))_{0\le i<N}$ are negatively associated.  Since negative
association implies pairwise nonpositive
covariances~\cite{JoagDevProschan1983},
\[
 \operatorname{Var}(M)
 \le \sum_{i=0}^{N-1}
       \operatorname{Var}\bigl(\chi(i\in T)\bigr)
 \le \EE[M]=\mu.
\]
Therefore
\begin{equation}\label{eq:lhs-intersection-size}
 \PP(M>2\mu)\le \mu^{-1}.
\end{equation}

Given $M=v$, the set $T$ is uniform over all $v$-subsets of
$\{0,\ldots,N-1\}$ by permutation invariance.  Thus, for every $s$ and
$0\le v\le N$,
\[
 p_v:=\PP(T\cap S_s=\emptyset\mid M=v)
 =\frac{\binom{N-r}{v}}{\binom Nv}
 =\prod_{u=0}^{v-1}\left(1-\frac{r}{N-u}\right).
\]
For $v\le2\mu$ and all sufficiently large $N$, we have $v\le N/2$ and
\[
 0\le \frac{r}{N-u}\le \frac{r}{N-v}\le\frac12,
 \qquad 0\le u<v.
\]
Using $\log(1-x)\ge-2x$ for $0\le x\le1/2$, together with
$r=N\alpha$, $v\le 2N\alpha^{d-1}$, and $N-v\ge N/2$, we obtain
\[
 \log p_v\ge -2\sum_{u=0}^{v-1}\frac{r}{N-u}
 \ge -\frac{2rv}{N-v}
 \ge -\frac{2(N\alpha)(2N\alpha^{d-1})}{N/2}=-8N\alpha^d.
\]
Hence, by \eqref{eq:lhs-intersection-size}, for every $s$,
\[
 \PP(T\cap S_s=\emptyset)
 =\sum_{v=0}^N p_v\PP(M=v)
 \ge \sum_{v\le2\mu}\exp(-8N\alpha^d)\PP(M=v)
 \ge (1-\mu^{-1})\exp(-8N\alpha^d).
\]

Summing over $s$ gives
\[
 \EE[Z]
 =\sum_{s=0}^{K-1}\PP(T\cap S_s=\emptyset)
 \ge \frac{1-\mu^{-1}}{2a}\exp(-8Na^d)
 =\frac{1-\mu^{-1}}{2c}
   N^{1/d-8c^d}(\log N)^{-1/d}.
\]
Choose $c$ so that $8c^d<1/d$.  Then $\EE[Z]\to\infty$.

The indicators $(\chi(i\notin T))_{0\le i<N}$ are negatively
associated.  Since the sets $S_0,\ldots,S_{K-1}$ are disjoint, the closure
properties for coordinatewise monotone functions of disjoint
blocks~\cite[Properties~P6 and~P7]{JoagDevProschan1983} show that the
empty-block indicators
$\chi(T\cap S_s=\emptyset)=\prod_{i\in S_s}\chi(i\notin T)$,
$0\le s<K$, are negatively associated.  Consequently,
$\operatorname{Var}(Z)\le \EE[Z]$.
By Lemma~\ref{lem:probabilistic-tools}(ii),
\[
 \PP(Z=0)
 \le \frac{\operatorname{Var}(Z)}{\EE[Z]^2}
 \le \frac1{\EE[Z]}
 \longrightarrow0.
\]
Thus, with probability tending to one, $Z\ge1$, and therefore
\[
 h_\infty(P_N^{\mathrm{LHS}})\ge \frac{\alpha}{2}\ge\frac a4.
\]
This proves the covering-radius lower bound.\qedhere

\end{proof}

\begin{remark}
When $d=2$, the upper bound $R_N^{\mathrm{LHS}}=O_{\PP}(N^{-1})$ also follows,
after rescaling and accounting for the within-stratum jitters,
from~\cite[Theorem~1]{BlackburnHombergerWinkler2019} on the minimum Manhattan
distance of a uniform random permutation graph.  Our argument treats all fixed
$d\ge2$ in a unified way.
\end{remark}

\section{Full Owen scrambling}\label{sec:owen}

\subsection{The one-dimensional case}

In one dimension, full Owen scrambling coincides in distribution at each level
with jittered sampling, and hence we have the following.

\begin{theorem}
\label{thm:one-dimensional-owen}
Let $b\ge2$, let $\mathcal S=(x_j)_{j\ge0}$ be a $(0,1)$-sequence in base $b$,
and let $\widetilde{\mathcal S}=(\widetilde x_j)_{j\ge0}$ be obtained by
applying a single full Owen scrambling to $\mathcal S$.  Set
$\widetilde P_n=(\widetilde x_j)_{0\le j<n}$ for $n\ge1$.
Then, as $m\to\infty$,
\[
 b^{3m/2}R_\infty(\widetilde P_{b^m})
 \xrightarrow{\mathrm d} W_{2,1/2}.
\]
Moreover, there are constants $C_b,c_b>0$ such that, almost surely,
\[
 R_\infty(\widetilde P_n)
 \le C_b n^{-3/2}(\log n)^{1/2},
 \qquad
 \rho_\infty(\widetilde P_n)
 \ge c_b n^{1/2}(\log n)^{-1/2}
\]
for all sufficiently large $n$.
\end{theorem}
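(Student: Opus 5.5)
Our plan is to reduce everything to the jittered case $d=1$ of Theorem~\ref{thm:jittered-minimum-distance}, with $n=N=b^m$. The first step is the distributional identity. Since $\mathcal S$ is a $(0,1)$-sequence, $P_{b^m}$ contains exactly one point in each interval $[a b^{-m},(a+1)b^{-m})$. Full Owen scrambling permutes these level-$m$ intervals bijectively, because it is determined by the permutations at depths $1,\dots,m$. Below depth $m$, the scrambled digits of each point are driven by permutations attached to distinct depth-$m$ prefixes. These permutations are independent of each other and of the top levels, and an Owen-scrambled digit tail is uniform, since the point is then uniform in $[0,1)$. Hence $\widetilde P_{b^m}$, viewed as a set, has the law of a jittered sample with $n=b^m$ cells. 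The minimum distance is label-invariant, so Theorem~\ref{thm:jittered-minimum-distance} with $d=1$ gives
\[
 b^m\,b^{m/2}R_\infty(\widetilde P_{b^m})\xrightarrow{\mathrm d}W_{2,1/2}.
\]
This is the stated limit, since $d2^{d-2}=1/2$ and $nN^{1/(d+1)}=b^{3m/2}$. (Endpoint $1$ has probability zero, so half-open cells cause no issue.)

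For the almost-sure bounds we apply the upper tail in \eqref{eq:jittered-elementary-tails} at level $m$ with $\delta_m=(6\cdot 2\, m\log b\, b^{-m})^{1/2}$, which is $<1/2$ for large $m$. This gives
\[
 \PP\bigl(R_\infty(\widetilde P_{b^m})>\delta_m b^{-m}\bigr)\le \exp\bigl(-\tfrac16 b^m\delta_m^2\bigr)=b^{-2m},
\]
which is summable. By Lemma~\ref{lem:first-borel-cantelli}, almost surely for all large $m$ we have $R_\infty(\widetilde P_{b^m})\le C b^{-3m/2}m^{1/2}$. To pass to general $n$, choose $m$ with $b^{m}\le n<b^{m+1}$. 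Lemma~\ref{lem:initial-segment-comparison} with $M_r=b^r$ gives $R_\infty(\widetilde P_n)\le R_\infty(\widetilde P_{b^m})$. Since $b^{-m}\le b\,n^{-1}$ and $m\le \log n/\log b$, we get $R_\infty(\widetilde P_n)\le C_b n^{-3/2}(\log n)^{1/2}$.

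The mesh-ratio bound then follows from \eqref{eq:universal-geometric-bounds}. We have $h_\infty(\widetilde P_n)\ge 1/(2n)$ and $q_\infty=R_\infty/2$, so $\rho_\infty(\widetilde P_n)\ge n^{-1}/R_\infty(\widetilde P_n)\ge c_b n^{1/2}(\log n)^{-1/2}$.

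The main obstacle is the first step: checking carefully that the joint law of the scrambled level-$b^m$ point set is exactly jittered. The cell assignment is a random bijection, but its randomness is independent of the within-cell positions, and these positions are independent across cells. We need the nested tree structure to guarantee that the depth-$>m$ permutations used by different points are disjoint families. This holds because distinct points have distinct length-$m$ prefixes.
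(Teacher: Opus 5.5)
Your proposal is correct and follows essentially the same route as the paper's proof. Both identify the scrambled $b^m$-point set with a one-dimensional jittered sample, then apply Theorem~\ref{thm:jittered-minimum-distance} and its upper tail with $\delta_m\asymp m^{1/2}b^{-m/2}$ together with Borel--Cantelli, and transfer to all $n$ via Lemma~\ref{lem:initial-segment-comparison} and \eqref{eq:universal-geometric-bounds}. You additionally spell out the subtree-independence justification of the jittered law and an explicit choice of $\delta_m$, which the paper leaves implicit.
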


\begin{proof}
Since the scrambled point set has the same distribution as a one-dimensional
jittered sample, the distributional assertion is the one-dimensional case of
Theorem~\ref{thm:jittered-minimum-distance}.
Applying \eqref{eq:jittered-elementary-tails} with
$\delta_m=Cm^{1/2}b^{-m/2}$ and choosing $C$ sufficiently large gives a
summable upper bound for
$\PP(R_\infty(\widetilde P_{b^m})>\delta_m b^{-m})$.
Hence the first Borel--Cantelli lemma gives, almost surely,
\[
 R_\infty(\widetilde P_{b^m})
 \le C_bm^{1/2}b^{-3m/2}
\]
for all sufficiently large $m$.  The two almost-sure bounds for all
sufficiently large $n$ now follow from
Lemma~\ref{lem:initial-segment-comparison} and
\eqref{eq:universal-geometric-bounds}.
\end{proof}

We now turn to arbitrary fixed dimension.

\subsection{Close pairs from isotropic occupancy}
Let $b\ge2$ and $d\ge1$ be fixed.  The close-pair construction below does
not require the full net property.  It only uses the fact that, at a level
comparable with the natural spacing scale $N^{-1/d}$, every isotropic
$b$-adic cell is occupied.

We use the following standard subtree-independence property of full Owen
scrambling.  For references, see~\cite{Owe95} and
compare~\cite[Section~2]{WiartLemieuxDong2021}.

\begin{lemma}
\label{lem:owen-prefix-selection}
Let $\boldsymbol{x}^{(1)},\ldots,\boldsymbol{x}^{(R)}\in[0,1)^d$ be fixed,
and let $\boldsymbol{X}^{(1)},\ldots,\boldsymbol{X}^{(R)}$ be their images
under a $d$-dimensional full Owen scrambling.  Fix
$k\ge0$, a set
$\mathcal I\subseteq\{1,\ldots,R\}\times\{1,\ldots,d\}$,
and integers $p_{r,j}\in\{0,\ldots,b^k-1\}$ for $(r,j)\in\mathcal I$.
Suppose that the map $(r,j)\mapsto (j,p_{r,j})$ is injective on
$\mathcal I$.  Conditional on
\[
 \left\{\left\lfloor b^kX_j^{(r)}\right\rfloor=p_{r,j}
       \text{ for all }(r,j)\in\mathcal I\right\},
\]
whenever this event has positive probability, the variables
$U_{r,j}:=b^kX_j^{(r)}-p_{r,j}$, $(r,j)\in\mathcal I$,
are independent and uniform on $[0,1)$.
The same conclusion remains valid if the indices are selected after the first
$k$ scrambled digits of all points have been revealed, provided that the
selection depends only on these digits and the original indices.  Indeed, such
a selection is independent of all permutations below level $k$.
\end{lemma}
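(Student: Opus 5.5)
We will prove Lemma~\ref{lem:owen-prefix-selection} coordinatewise and level by level, using the tree structure of full Owen scrambling.

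First fix a coordinate $j$.  The scrambled value $X_j^{(r)}$ has digits $\widetilde a_1,\widetilde a_2,\ldots$, where $\widetilde a_s=\pi_{j,a_1\cdots a_{s-1}}(a_s)$ and $a_s$ are the original digits of $x_j^{(r)}$.  The first $k$ scrambled digits, and hence $\lfloor b^kX_j^{(r)}\rfloor$, are functions of the permutations at depths $<k$ only.  The remaining quantity $b^kX_j^{(r)}-\lfloor b^kX_j^{(r)}\rfloor$ has digits $\widetilde a_{k+1},\widetilde a_{k+2},\ldots$.  These are produced by the permutations indexed by prefixes of length $\ge k$ that extend the original prefix $a_1\cdots a_k$ of $x_j^{(r)}$; this is the subtree below that prefix.

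The scrambling map on the first $k$ levels is a bijection of $\mathcal D_b^k$ (a tree automorphism).  Hence distinct scrambled prefixes $p_{r,j}\neq p_{r',j}$ in the same coordinate correspond to distinct original prefixes.  Injectivity of $(r,j)\mapsto(j,p_{r,j})$ on $\mathcal I$ therefore means that, on the conditioning event, the points indexed by $\mathcal I$ in coordinate $j$ lie in pairwise distinct original depth-$k$ subtrees.  Across different $j$ the permutation families are independent by construction.

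Next we use independence.  The families of permutations attached to disjoint subtrees (over all $j$ and all depth-$k$ prefixes) are mutually independent, and they are jointly independent of the permutations at depths $<k$.  The conditioning event is measurable with respect to the depth-$<k$ permutations.  It is also measurable with respect to the original digits, which are fixed, since the event says that the prefix $a_1\cdots a_k$ is mapped to $p_{r,j}$.  So conditioning does not change the law of the subtree permutations.  For each $(r,j)\in\mathcal I$, the variable $U_{r,j}$ is a function of one such subtree family, and these subtrees are distinct, so the $U_{r,j}$ are independent.

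It remains to show that each $U_{r,j}$ is uniform on $[0,1)$.  The digits $\widetilde a_{k+1},\widetilde a_{k+2},\ldots$ are i.i.d.\ uniform on $\mathcal D_b$: at each step $s$, the permutation $\pi_{j,a_1\cdots a_{s-1}}$ is uniform and has not been used for earlier digits of this point.  Therefore $\pi(a_s)$ is uniform and independent of the previous digits.  An expansion with i.i.d.\ uniform digits is uniform on $[0,1)$, and the endpoint issue has probability zero.

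For the adaptive-selection clause, suppose the selection of $\mathcal I$ and the values $p_{r,j}$ depends only on the first $k$ scrambled digits of all points and the original indices.  Then the selection is measurable with respect to the depth-$<k$ permutations.  Condition on that $\sigma$-field and apply the fixed-selection argument to each realized selection, which is a countable (here finite) decomposition.  The same independence and uniformity follow.

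The only delicate step is the bookkeeping that identifies the depth-$\ge k$ subtrees used by different $(r,j)$ as disjoint.  The bijectivity of the top-$k$ scrambling handles it.
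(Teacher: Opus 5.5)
Your proof is correct and takes essentially the paper's approach. The paper gives no detailed argument: it cites this as the standard subtree-independence property of full Owen scrambling, and it justifies the adaptive clause by noting that the selection is independent of all permutations below level $k$. Your argument supplies exactly those details: the top-$k$ scrambling is a bijection, so the selected $(r,j)$ sit in distinct original depth-$k$ subtrees; the permutation families on disjoint subtrees are independent of the depth-$<k$ permutations; the tail digits are i.i.d.\ uniform; and the adaptive case follows by decomposing over the finitely many atoms of the depth-$<k$ $\sigma$-field.
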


\begin{theorem}
\label{thm:owen-upper}
Let $\gamma\in\mathbb N_0$ be fixed.  For each sufficiently large $m$, set
$k=\lfloor m/d\rfloor-\gamma$,
and let $P_{b^m}$ be a deterministic $N$-point set in $[0,1)^d$, where $N=b^m$.
Suppose that every isotropic $b$-adic cube
\begin{equation}\label{eq:isotropic-occupancy-cubes}
 Q_{\boldsymbol{a}}
 =\prod_{j=1}^d
 \left[\frac{a_j}{b^k},\frac{a_j+1}{b^k}\right),
 \qquad 0\le a_j<b^k,
\end{equation}
contains at least one point of $P_{b^m}$.  Let $\widetilde P_{b^m}$ be obtained from
$P_{b^m}$ by full Owen scrambling.

Then there exist positive constants $c_{b,d,\gamma}$ and
$C_{b,d,\gamma}$ such that, for every sufficiently large $m$ and every
$1\le\lambda\le N^{1/d}$, with probability at least
$1-C_{b,d,\gamma}\lambda^{-1}$,
\[
\begin{aligned}
R_\infty(\widetilde P_{b^m})
&\le C_{b,d,\gamma}\,N^{-3/(2d)}\lambda^{1/2},\\
\rho_\infty(\widetilde P_{b^m})
&\ge c_{b,d,\gamma}\,N^{1/(2d)}\lambda^{-1/2}.
\end{aligned}
\]
In particular,
\[
R_\infty(\widetilde P_{b^m})
=O_{\PP}\!\left(N^{-3/(2d)}\right),
\qquad
\rho_\infty(\widetilde P_{b^m})
=\Omega_{\PP}\!\left(N^{1/(2d)}\right).
\]
\end{theorem}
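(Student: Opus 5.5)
Under full Owen scrambling the level-$k$ prefix cells form a permuted grid with at least one point in every cube. So I will select one point per level-$k$ cube, using only the scrambled prefixes, and then treat the relative positions below level $k$ as jittered-type independent uniforms via Lemma~\ref{lem:owen-prefix-selection}. Full Owen scrambling acts on each coordinate's digit tree by a tree automorphism, so it maps the family of isotropic level-$k$ cubes bijectively onto itself. Hence every cube $Q_{\boldsymbol a}$ still contains at least one point of $\widetilde P_{b^m}$, and the occupying indices are determined by the first $k$ scrambled digits. In each cube I select one representative point by a rule depending only on these digits and the original indices, say the smallest index.

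Next I mimic the disjoint-pairs argument in the proof of Theorem~\ref{thm:jittered-minimum-distance}. I pair face-adjacent cubes in the first coordinate, $Q_{(2s,a_2,\dots,a_d)}$ with $Q_{(2s+1,a_2,\dots,a_d)}$, which gives $M\ge b^{kd}/3\asymp_{b,d,\gamma}N$ disjoint pairs. Fix a pair and its two representatives $r,r'$. For $j\ge2$ they share the prefix $a_j$, so the map $(r,j)\mapsto(j,p_{r,j})$ is not injective there. This is the main obstacle: I must not rely on coordinate-wise independence within the same prefix.

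To handle it, I will condition at depth $k+1$ instead of $k$. The points of a level-$k$ cube lying in distinct level-$(k+1)$ subcubes across all coordinates are exactly what independence requires. So I pass to representatives defined by their first $k+1$ digits and restrict to pairs whose two representatives differ in digit $k+1$ in every coordinate $j\ge2$, and in coordinate $1$ sit in the adjacent boundary subcells (last digit $b-1$ versus $0$). Each such configuration has probability bounded below by a constant $c_{b,d}>0$ given the level-$k$ prefixes, because the digit-$(k+1)$ permutations are uniform and independent across distinct prefix nodes. If occupancy alone does not guarantee two distinct level-$(k+1)$ digits, I instead compare one point in a cube with a point in the neighbouring cube in every coordinate, i.e.\ diagonal neighbours, so all prefixes differ; then injectivity holds outright and the uniforms are independent. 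Either way, after conditioning on the selected prefixes, each pair is close, $\lVert\boldsymbol X-\boldsymbol X'\rVert_\infty\le\delta b^{-k'}$, with probability $\ge c\,\delta^{d+1}$. In the diagonal variant this requires the coordinates to lie near the common corner, which costs $\delta^{2d}$ instead. So I will use the first variant with one face-adjacent coordinate and $d-1$ "same cell but independent" coordinates, which reproduces the jittered exponent $\delta^{d+1}$. Distinct pairs use disjoint prefix nodes, so the events are conditionally independent.

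Then $\PP(\text{no close pair})\le\exp(-cN\delta^{d+1})$. Taking $\delta^{d+1}=\lambda/N$ and using $b^{-k}\asymp N^{-1/d}$ gives $R_\infty\le CN^{-1/d}(\lambda/N)^{1/(d+1)}$. This is weaker than the claimed $N^{-3/(2d)}$ when $d\ge2$, so the per-pair probability must be improved. The correct count uses all pairs of points within the same level-$k$ cube, whose expected number of close pairs at scale $\delta b^{-k}$ is $\asymp N\delta^d$, but those pairs are dependent. The target $\delta\asymp N^{-1/(2d)}\lambda^{1/2}$ indicates $\Theta(N)$ pairs with probability $\delta^{2d}$: independent uniforms in $d$ coordinates at one level, each costing $\delta^2$. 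So I will use independent pairs at a common scale in each coordinate and get the $\lambda^{-1}$ tail via the second moment from Lemma~\ref{lem:probabilistic-tools}(ii), expecting $\mathrm{Var}/\EE^2\lesssim 1/\EE\asymp\lambda^{-1}$. Finally, \eqref{eq:universal-geometric-bounds} converts the bound on $R_\infty$ into the mesh-ratio bound.
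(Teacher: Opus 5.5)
\textbf{There is a genuine gap.} The construction you commit to does not work.

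Your face-adjacent pairs with $d-1$ ``same cell but independent'' coordinates need independence that full Owen scrambling does not provide. Its independence is per (coordinate, prefix node), not per cube. Two points sharing a level-$k$ prefix in coordinate $j$ are moved by the same permutations. Whether they differ at digit $k+1$ is not random at all, since common-prefix lengths are preserved. Even when they do differ there, they can be $\delta b^{-k}$-close in that coordinate only near a common child boundary. That costs order $\delta^2$, as in Lemma~\ref{lem:owen-two-point}, not $\delta$.

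Moreover, there are only $b^k\asymp N^{1/d}$ level-$k$ prefix nodes per coordinate. Any family of $\Theta(N)$ pairs therefore reuses each node about $N^{1-1/d}$ times, so the events are not conditionally independent and the bound $\exp(-cN\delta^{d+1})$ is unavailable. You also have the comparison backwards: $N^{-1/d-1/(d+1)}$ is \emph{smaller} than $N^{-3/(2d)}$ for $d\ge2$. That rate would contradict the lower bound of Corollary~\ref{cor:owen-probabilistic-bounds} for $(0,m,2)$-nets, which satisfy its hypotheses by Lemma~\ref{lem:coincidence-zero-net}. So that route cannot be repaired.

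The diagonal variant you dismissed is exactly what the paper uses, and $\delta^{2d}$ is the correct price. Take $L=\lfloor b^k/2\rfloor$ and, for $\boldsymbol r\in\{0,\ldots,L-1\}^d$, pair the cubes with indices $2r_j$ and $2r_j+1$ in every coordinate. The $2d$ prefixes of one pair are then distinct, and Lemma~\ref{lem:owen-prefix-selection} gives $\PP(E_{\boldsymbol r})=\eta^{2d}$ with $\eta=\lambda^{1/2}N^{-1/(2d)}$.

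What your proposal lacks is a correct second-moment bound, and your heuristic for it is wrong on both counts. First, $\EE[Z]=(L\eta^2)^d\asymp\lambda^d$, not $\lambda$. Second, $\operatorname{Var}(Z)\lesssim\EE[Z]$ is not available, because pairs with $r_j=r'_j$ share both coordinate-$j$ prefix nodes. Under the occupancy hypothesis alone, their points may share many further digits, so they can be strongly positively correlated there.

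The missing step is as follows. Condition on the first $k$ scrambled digits and factor $\PP(E_{\boldsymbol r}\cap E_{\boldsymbol r'})$ over coordinates. The factor is $\eta^4$ if $r_j\ne r'_j$, since there are four distinct prefixes. It is at most $\eta^2$ if $r_j=r'_j$: keep only one of the two constraints in each shared prefix. Summing coordinatewise gives $\EE[Z^2]\le\bigl(L\eta^2+L(L-1)\eta^4\bigr)^d$, hence $\operatorname{Var}(Z)/\EE[Z]^2\le\bigl(1-1/L+1/(L\eta^2)\bigr)^d-1\lesssim 1/(L\eta^2)\asymp\lambda^{-1}$. The $\lambda^{-1}$ tail comes from these shared-coordinate correlations, not from $1/\EE[Z]$.
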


\begin{proof}
Put $L=\lfloor b^k/2\rfloor$.
For $\boldsymbol{r}\in\{0,\ldots,L-1\}^d$ and
$\varepsilon\in\{0,1\}$, define
\[
 I_{\boldsymbol{r}}^{(\varepsilon)}
 =\prod_{j=1}^d
 \left[\frac{2r_j+\varepsilon}{b^k},
       \frac{2r_j+\varepsilon+1}{b^k}\right).
\]
The closures of $I_{\boldsymbol{r}}^{(0)}$ and
$I_{\boldsymbol{r}}^{(1)}$ meet at one vertex.  Full Owen scrambling maps
the level-$k$ cube labels bijectively, so every such cube is nonempty.  In
each cube choose, for example, the scrambled point having the least original
index, and denote it by
$\boldsymbol{X}_{\boldsymbol{r}}^{(\varepsilon)}$.

Put $\eta=\lambda^{1/2}N^{-1/(2d)}$.
Since $1\le\lambda\le N^{1/d}$, we have $0<\eta\le1$.  Let
$E_{\boldsymbol{r}}$ be the event that, in every coordinate,
$\boldsymbol{X}_{\boldsymbol{r}}^{(0)}$ and
$\boldsymbol{X}_{\boldsymbol{r}}^{(1)}$ lie within relative distance $\eta$
of their common boundary, and put
\[
Z_m:=\sum_{\boldsymbol{r}\in\{0,\ldots,L-1\}^d}
\chi(E_{\boldsymbol{r}}).
\]
If $Z_m\ge1$, then one of the selected pairs has distance at most
\[
 2\eta b^{-k}
 \le C_{b,d,\gamma}N^{-3/(2d)}\lambda^{1/2}.
\]
Therefore Lemma~\ref{lem:probabilistic-tools}(ii) gives
\[
 \PP\left(R_\infty(\widetilde P_{b^m})
 \le C_{b,d,\gamma}N^{-3/(2d)}\lambda^{1/2}\right)
 \ge \PP(Z_m\ge1)
 \ge 1-\frac{\operatorname{Var}(Z_m)}{(\EE[Z_m])^2}.
\]
Thus it suffices to prove
\begin{equation}\label{eq:owen-upper-second-moment-target}
 \frac{\operatorname{Var}(Z_m)}{(\EE[Z_m])^2}
 \le C_{b,d,\gamma}\lambda^{-1}.
\end{equation}

We now estimate the first and second moments of $Z_m$.  Condition on the
first $k$ scrambled digits.  The selected point indices and their level-$k$
prefixes are then fixed.  The $2d$ prefixes involved in
$E_{\boldsymbol{r}}$ are distinct, so Lemma~\ref{lem:owen-prefix-selection}
shows that, under the resulting conditional law,
$\PP(E_{\boldsymbol{r}})=\eta^{2d}$.  Since this value does not depend on the
fixed prefixes, the same identity holds unconditionally.  Thus
$\EE[Z_m]=(L\eta^2)^d$.

Under the same conditioning, consider two indices
$\boldsymbol{r},\boldsymbol{r}'$.  If $r_j=r_j'$, the joint event imposes
restrictions within the same two prefixes in coordinate $j$.  Within each
prefix, this joint restriction is contained in a single tail condition of
probability $\eta$, so the resulting factor is at most $\eta^2$.  If
$r_j\ne r_j'$, the joint event imposes restrictions within four distinct
prefixes, giving the factor $\eta^4$ by
Lemma~\ref{lem:owen-prefix-selection}.  Summing over
$\boldsymbol{r}$ and $\boldsymbol{r}'$ coordinate by coordinate shows, for
every fixed realization of the first $k$ scrambled digits, that the second
moment is at most $\bigl(L\eta^2+L(L-1)\eta^4\bigr)^d$.
Averaging over the first $k$ scrambled digits therefore gives
\[
 \EE[Z_m^2]
 \le
 \bigl(L\eta^2+L(L-1)\eta^4\bigr)^d.
\]
Substituting these estimates into the left-hand side of
\eqref{eq:owen-upper-second-moment-target} gives
\[
 \frac{\operatorname{Var}(Z_m)}{\EE[Z_m]^2}
 \le \left(1-\frac1L+\frac{1}{L\eta^2}\right)^d-1
 \le \frac{C_{b,d,\gamma}}{L\eta^2}
 \le C_{b,d,\gamma}\lambda^{-1}.
\]
Here the last two inequalities use
$L\eta^2\asymp_{b,d,\gamma}\lambda$, which follows from
$k=\lfloor m/d\rfloor-\gamma$, $L^d\asymp_{b,d,\gamma}N$, and
$\eta^2=\lambda N^{-1/d}$.
This proves \eqref{eq:owen-upper-second-moment-target}, and hence the bound for
$R_\infty$.  Together with the covering lower bound in
\eqref{eq:universal-geometric-bounds}, this gives the asserted mesh-ratio
bound.
\end{proof}

\begin{corollary}
\label{cor:owen-upper-net}
Let $t,d,b$ be fixed, let $P_{b^m}$ be a deterministic $(t,m,d)$-net in base
$b$, and let $\widetilde P_{b^m}$ be obtained by full Owen scrambling.
Then the conclusions of Theorem~\ref{thm:owen-upper} hold with constants
depending only on $b,d,t$.
\end{corollary}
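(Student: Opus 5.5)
The plan is to reduce Corollary~\ref{cor:owen-upper-net} directly to Theorem~\ref{thm:owen-upper}. I will exhibit a fixed $\gamma\in\mathbb N_0$, depending only on $d$ and $t$, such that for every sufficiently large $m$ every isotropic $b$-adic cube $Q_{\boldsymbol{a}}$ of side $b^{-k}$, with $k=\lfloor m/d\rfloor-\gamma$, contains at least one point of the $(t,m,d)$-net $P_{b^m}$. Once this occupancy condition holds, Theorem~\ref{thm:owen-upper} applies verbatim with constants $c_{b,d,\gamma}$, $C_{b,d,\gamma}$. Since $\gamma$ is a function of $d$ and $t$ only, these constants depend only on $b,d,t$.

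For the occupancy, note that $Q_{\boldsymbol{a}}$ is the level-$(k,\ldots,k)$ elementary interval $E((k,\ldots,k),\boldsymbol{a})$, of volume $b^{-dk}$. The net property says that every elementary interval of volume $b^{t-m}$ contains exactly $b^t\ge1$ points. Hence any elementary interval of volume at least $b^{t-m}$ contains at least one point, because it is a disjoint union of elementary intervals of volume exactly $b^{t-m}$. To see this, refine the level vector $(k,\ldots,k)$ in some coordinates until the total level equals $m-t$; this requires $dk\le m-t$. So I choose $\gamma=\lceil t/d\rceil$. Then
\[
 dk=d\lfloor m/d\rfloor-d\lceil t/d\rceil\le m-t,
\]
and $k\ge0$ for all sufficiently large $m$. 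This gives the hypothesis of Theorem~\ref{thm:owen-upper}.

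The main point to check is that the refinement decomposition is legitimate. Splitting coordinate $j$ from level $k$ to level $k'\ge k$ partitions each interval into $b^{k'-k}$ subintervals. Therefore $Q_{\boldsymbol{a}}$ is a disjoint union of elementary intervals at the refined level vector, each of volume $b^{t-m}$ and each containing $b^t$ points. A second point is the ``sufficiently large $m$'' clause: Theorem~\ref{thm:owen-upper} only requires the condition for each sufficiently large $m$, which we have.

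Everything else, namely the bounds on $R_\infty$ and $\rho_\infty$ and the $O_{\PP}$ and $\Omega_{\PP}$ conclusions, is inherited directly from Theorem~\ref{thm:owen-upper}.
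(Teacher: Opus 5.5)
Your proposal is correct and matches the paper's proof: the paper also takes $\gamma=\lceil t/d\rceil$, notes $dk\le m-t$, and partitions each isotropic level-$k$ cube into elementary intervals of volume $b^{t-m}$ to verify the occupancy hypothesis of Theorem~\ref{thm:owen-upper}. The constants then depend only on $b,d,t$, as you say.
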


\begin{proof}
Take $\gamma=\lceil t/d\rceil$ and
$k=\lfloor m/d\rfloor-\gamma$.  Then $dk\le m-t$, and every level-$k$
isotropic cube contains exactly $b^{m-dk}$ points by partitioning it into
elementary intervals of volume $b^{t-m}$.  Hence
Theorem~\ref{thm:owen-upper} applies.
\end{proof}

The same estimate yields a one-sided almost-sure conclusion for a single
scrambling of a whole sequence, without the coincidence and cutoff assumptions
introduced below.

\begin{corollary}
\label{cor:owen-sequence-nonqu}
Let $\mathcal S=(\boldsymbol{x}_n)_{n\ge0}$ be a $(t,d)$-sequence in base $b$, not
necessarily digital, and let
$\widetilde{\mathcal S}=(\widetilde{\boldsymbol{x}}_n)_{n\ge0}$ be obtained by applying
a single full Owen scrambling to $\mathcal S$.  For $n\ge1$, set
$\widetilde P_n=(\widetilde{\boldsymbol{x}}_j)_{0\le j<n}$.
Then, for every $\varepsilon>0$, almost surely there exists $n_0$ such that
\[
\rho_\infty(\widetilde P_n)
\ge
c_{\varepsilon,b,d,t}\,n^{1/(2d)}(\log n)^{-1/2}
\bigl(\log(2+\log n)\bigr)^{-(1+\varepsilon)/2}
\]
for all $n\ge n_0$.  In particular, the mesh ratio of the initial segments of
$\widetilde{\mathcal S}$ diverges almost surely, and hence
$\widetilde{\mathcal S}$ is almost surely not quasi-uniform.
\end{corollary}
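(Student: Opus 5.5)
The plan is to apply Corollary~\ref{cor:owen-upper-net} along the geometric subsequence $n=b^m$, then use the first Borel--Cantelli lemma, and finally interpolate to all $n$ with Lemma~\ref{lem:initial-segment-comparison}. For each $m\ge t$, the initial segment $\widetilde P_{b^m}$ is the full Owen scrambling of the first $b^m$ points of $\mathcal S$. These points form a $(t,m,d)$-net, taking $k=0$ in the definition of a $(t,d)$-sequence. The scrambling is a single one, shared by all levels, but each individual marginal law is exactly that of Corollary~\ref{cor:owen-upper-net}. Borel--Cantelli does not need independence across $m$, so this sharing causes no difficulty.

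Fix $\varepsilon>0$ and choose $\lambda_m:=m(\log m)^{1+\varepsilon}$ for $m\ge3$. This satisfies $1\le\lambda_m\le b^{m/d}=N^{1/d}$ for large $m$. By Corollary~\ref{cor:owen-upper-net}, the event
\[
A_m:=\Bigl\{R_\infty(\widetilde P_{b^m})>C_{b,d,t}\,b^{-3m/(2d)}\lambda_m^{1/2}\Bigr\}
\]
has probability at most $C_{b,d,t}\lambda_m^{-1}=C_{b,d,t}\,m^{-1}(\log m)^{-1-\varepsilon}$, which is summable in $m$. Lemma~\ref{lem:first-borel-cantelli} then gives, almost surely for all large $m$,
\[
R_\infty(\widetilde P_{b^m})\le C\,b^{-3m/(2d)}\,m^{1/2}(\log m)^{(1+\varepsilon)/2}.
\]

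For general $n$, let $b^m\le n<b^{m+1}$ and apply Lemma~\ref{lem:initial-segment-comparison} with $M_r=b^r$. It gives $R_\infty(\widetilde P_n)\le R_\infty(\widetilde P_{b^m})$. For the numerator, \eqref{eq:universal-geometric-bounds} gives $h_\infty(\widetilde P_n)\ge \tfrac12 n^{-1/d}\ge \tfrac12 b^{-(m+1)/d}$. Combining these yields
\[
\rho_\infty(\widetilde P_n)\ge c\,b^{m/(2d)}m^{-1/2}(\log m)^{-(1+\varepsilon)/2}.
\]
Since $m\asymp\log n$ and $\log m\asymp\log(2+\log n)$, and since $b^{m/(2d)}\ge b^{-1/(2d)}n^{1/(2d)}$, this is the asserted bound, with constants depending on $\varepsilon,b,d,t$. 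The divergence of the mesh ratio, and hence non-quasi-uniformity, follows immediately.

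The only delicate point is the choice of $\lambda_m$. The failure probability from the second-moment bound decays only like $\lambda^{-1}$, which is polynomial rather than exponential, so $\lambda_m$ must grow just fast enough to be summable. That requirement is exactly what produces the $(\log n)^{-1/2}(\log\log n)^{-(1+\varepsilon)/2}$ loss. A second point to check is that Corollary~\ref{cor:owen-upper-net} requires only ``sufficiently large $m$'' with constants uniform in $m$, so that the Borel--Cantelli sum is legitimate.
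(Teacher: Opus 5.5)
Your proposal is correct and matches the paper's proof. Both apply Corollary~\ref{cor:owen-upper-net} at $n=b^m$ with $\lambda_m$ of order $m(\log m)^{1+\varepsilon}$ (the paper writes $\log(2+m)$), use the first Borel--Cantelli lemma, and then pass to all $n$ via Lemma~\ref{lem:initial-segment-comparison} and the covering lower bound in \eqref{eq:universal-geometric-bounds}.
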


\begin{proof}
First let $n=b^m$.  For all sufficiently large $m$,
$\lambda_m:=m\bigl(\log(2+m)\bigr)^{1+\varepsilon}$ satisfies
$\lambda_m\le b^{m/d}$.  Applying
Corollary~\ref{cor:owen-upper-net} with $\lambda=\lambda_m$, the exceptional
probabilities are summable.  Hence the first Borel--Cantelli lemma gives,
almost surely for all sufficiently large $m$,
\[
R_\infty(\widetilde P_{b^m})
\le
C_{\varepsilon,b,d,t}\,b^{-3m/(2d)}m^{1/2}
\bigl(\log(2+m)\bigr)^{(1+\varepsilon)/2}.
\]
The asserted bound for all sufficiently large $n$ now follows from
Lemma~\ref{lem:initial-segment-comparison} and
\eqref{eq:universal-geometric-bounds}.
\end{proof}

\subsection{Coincidences and common prefixes}
\label{subsec:prefix-vectors}

The preceding argument constructs a close pair using only isotropic occupancy,
but that property alone gives no lower bound for the minimum distance.  For
example, in base $2$, a $(1,m,1)$-net places two points in each dyadic interval
of length $2^{1-m}$, and these two points may share arbitrarily many further
binary digits.  Full Owen scrambling preserves their common-prefix
length, so such a pair remains close after scrambling.  We therefore need
additional control of common-prefix coincidences.

The pair-count framework below is closely related to that of Wiart, Lemieux and
Dong~\cite{WiartLemieuxDong2021}.  Their normalized quantities count pairs
sharing prescribed initial digits, and their analysis decomposes the
conditional law of a scrambled pair according to its exact common-prefix
shell.  Here we use the same two-point structure for the minimum over all
pairs, but work primarily with the unnormalized cumulative counts.

We first record the common-prefix vector and its two-point small-ball law.

\begin{definition}[Common-prefix length]\label{def:common-prefix-length}
For $x,y\in[0,1)$, use the regular base-$b$ expansions, namely those not
ending in an infinite string of digits $b-1$, and define
\[
\ell(x,y)
:=
\sup\bigl\{\ell\in\mathbb N_0:
\lfloor b^\ell x\rfloor=\lfloor b^\ell y\rfloor\bigr\}
\in\mathbb N_0\cup\{\infty\}.
\]
Since the base $b$ is fixed throughout this section, we suppress it from the
notation.  Thus, for regular expansions, $\ell(x,y)$ is the number of common
leading base-$b$ digits and is infinite exactly when $x=y$.  For digital
points, we instead use their specified output-digit expansions, since these
determine the tree on which Owen scrambling acts.  The common-prefix length is
then infinite exactly when the specified digit sequences coincide.
For $\boldsymbol{x},\boldsymbol{y}\in[0,1)^d$, write
\[
\boldsymbol{\ell}(\boldsymbol{x},\boldsymbol{y})
=
\bigl(\ell(x_1,y_1),\ldots,\ell(x_d,y_d)\bigr).
\]
\end{definition}

The next lemma is the $\ell^\infty$ small-ball specialization of the conditional
two-point law for digital scrambling.  A related formulation appears
in~\cite[Section~2]{WiartLemieuxDong2021}.  We include the short calculation in the
normalization needed below.

\begin{lemma}\label{lem:owen-two-point}
Let $\boldsymbol{x},\boldsymbol{y}\in[0,1)^d$ be deterministic points with
common-prefix vector
$\boldsymbol{\ell}(\boldsymbol{x},\boldsymbol{y})
=(\ell_1,\ldots,\ell_d)\in(\mathbb N_0\cup\{\infty\})^d$.
Apply a $d$-dimensional full Owen scrambling, and
denote the images by
$\widetilde{\boldsymbol{x}},\widetilde{\boldsymbol{y}}$.  Then, for every
$r>0$,
\[
\PP\bigl(\lVert\widetilde{\boldsymbol{x}}-\widetilde{\boldsymbol{y}}\rVert_\infty\le r\bigr)
\le
\prod_{j=1}^d \min\{1,(b^{\ell_j+1}r)^2\},
\]
where the factor corresponding to $\ell_j=\infty$ is interpreted as $1$.
Moreover, if every $\ell_j$ is finite and
$r\le b^{-1-\max_j\ell_j}$, then
\[
\PP\bigl(\lVert\widetilde{\boldsymbol{x}}-\widetilde{\boldsymbol{y}}\rVert_\infty\le r\bigr)
=b^{2|\boldsymbol{\ell}|+d}r^{2d}.
\]
\end{lemma}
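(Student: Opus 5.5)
The plan is to reduce to one coordinate at a time. The coordinates of full Owen scrambling use independent permutation families, so $\PP(\lVert\widetilde{\boldsymbol{x}}-\widetilde{\boldsymbol{y}}\rVert_\infty\le r)=\prod_j\PP(|\widetilde x_j-\widetilde y_j|\le r)$. It therefore suffices to prove, in dimension one with $\ell=\ell(x,y)$, that
\[
\PP(|\widetilde x-\widetilde y|\le r)\le\min\{1,(b^{\ell+1}r)^2\}
\]
for every $r>0$, with equality $b^{2\ell+1}r^2$ when $\ell<\infty$ and $r\le b^{-1-\ell}$. The case $\ell=\infty$ is trivial, since then $\widetilde x=\widetilde y$ and the factor is $1$.

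For finite $\ell$, I would describe the one-dimensional scrambled pair explicitly.
\begin{itemize}
\item The first $\ell$ digits are common, and their images form a common random prefix $p$, so both points lie in the same cell $[p b^{-\ell},(p+1)b^{-\ell})$.
\item At digit $\ell+1$ the original digits differ. They are mapped by a single uniform permutation $\pi$ attached to the common prefix, so $(\widetilde a,\widetilde a')$ is a uniform ordered pair of distinct digits.
\item Below digit $\ell+1$ the two points follow different input prefixes, so all deeper permutations are distinct and independent. By the subtree-independence argument behind Lemma~\ref{lem:owen-prefix-selection}, applied conditionally on the first $\ell+1$ scrambled digits, the remainders $U=b^{\ell+1}\widetilde x-(\text{prefix})$ and $V$ are independent uniform on $[0,1)$, independent of $(\widetilde a,\widetilde a')$.
\end{itemize}
This is exactly the configuration of Lemma~\ref{lem:lhs-one-coordinate-close} with $N=b$, rescaled by $b^{-\ell}$:
\[
b^\ell|\widetilde x-\widetilde y|=|Y-Z|,\qquad Y=\frac{\widetilde a+U}{b},\quad Z=\frac{\widetilde a'+V}{b}.
\]
Consequently $\PP(|\widetilde x-\widetilde y|\le r)=p_b(b^\ell r)$.

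By Lemma~\ref{lem:lhs-one-coordinate-close}, for $b^\ell r\le b^{-1}$, that is $r\le b^{-1-\ell}$, this equals $b\,(b^\ell r)^2=b^{2\ell+1}r^2$. Taking the product over coordinates gives exactly $b^{2|\boldsymbol\ell|+d}r^{2d}$ under the hypothesis $r\le b^{-1-\max_j\ell_j}$, which proves the equality statement.

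For the inequality, note that $b^{2\ell+1}r^2\le (b^{\ell+1}r)^2$. When $r>b^{-1-\ell}$ we have $(b^{\ell+1}r)^2>1$, so the bound reduces to the trivial bound by $1$. Hence each factor is at most $\min\{1,(b^{\ell+1}r)^2\}$ in all cases.

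The step needing the most care is the independence claim. One must check that the permutation at depth $\ell+1$ is shared by the two points, so that it yields distinct digits uniformly, while all deeper permutations are distinct, so that $U$ and $V$ are independent. One must also check that conditioning on the revealed prefixes does not bias the remainders. This follows because the selection depends only on the top $\ell+1$ levels, which is the final clause of Lemma~\ref{lem:owen-prefix-selection}.

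For digital points with specified expansions, the same tree argument applies verbatim to the specified digit sequences. The case $\ell_j=\infty$ then means the sequences coincide, so the scrambled coordinates coincide and the factor is $1$.
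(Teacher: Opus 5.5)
Your proof is correct and follows essentially the same route as the paper. You reduce to one coordinate by independence of the coordinate scramblings, note that the level-$(\ell+1)$ scrambled digits form a uniform ordered pair of distinct digits with independent uniform tails, get $b^{2\ell+1}r^2$ for $r\le b^{-\ell-1}$, and use the trivial bound $1$ otherwise. The only difference is that you obtain the one-dimensional computation by rescaling Lemma~\ref{lem:lhs-one-coordinate-close} with $N=b$, whereas the paper redoes it directly as adjacency probability $2/b$ times the triangle area $(r/L)^2/2$ with $L=b^{-\ell-1}$.
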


\begin{proof}
First consider $d=1$.  If $\ell(x,y)=\infty$, the two output
digit sequences coincide, and the asserted estimate is the trivial bound by
$1$.  Otherwise write $\ell=\ell(x,y)$ and put $L=b^{-\ell-1}$.  The two level-$(\ell+1)$ output children form a uniform
ordered pair of distinct children, hence are adjacent with probability $2/b$.
Given adjacency, their normalized tails are independent uniform variables, and
for $0\le r\le L$ the required boundary event has area $(r/L)^2/2$.  Thus, for $0\le r\le L$,
\[
\PP(|\widetilde x-\widetilde y|\le r)
=\frac{2}{b}\frac{(r/L)^2}{2}=b^{2\ell+1}r^2
\le (b^{\ell+1}r)^2.
\]
For $r>L$, one has
$\min\{1,(b^{\ell+1}r)^2\}=1$, so the trivial probability bound gives
the same asserted estimate.  Applying it in each coordinate and using the
independence of the coordinate scramblings proves both $d$-dimensional
statements.
\end{proof}

We now count pairs that lie in the same level-$\boldsymbol{k}$ elementary
interval.  Let $P_{b^m}=(\boldsymbol{x}_n)_{0\le n<N}$, where $N=b^m$, be an
$N$-point set.  For $\boldsymbol{k}\in\mathbb N_0^d$ and
$E\in\mathcal E_{\boldsymbol{k}}$, put
\[
\nu_m(E)
=
\#\{0\le n<N:\boldsymbol{x}_n\in E\}.
\]
For digital points, membership at a $b$-adic boundary is determined by the
specified output-digit expansion.  Define the cumulative coincidence count
\begin{equation}\label{eq:coincidence-count}
M_m(\boldsymbol{k})
:=
\sum_{E\in\mathcal E_{\boldsymbol{k}}}
\nu_m(E)\bigl(\nu_m(E)-1\bigr).
\end{equation}
Thus $M_m(\boldsymbol{k})$ counts ordered pairs of different indices lying in
the same level-$\boldsymbol{k}$ elementary interval.

\begin{definition}[Elementary-interval coincidence bound]
\label{def:uniform-coincidence}
An $N$-point set $P_{b^m}$ satisfies the elementary-interval
coincidence bound with constant $A\ge1$ if
\begin{equation}\label{eq:uniform-coincidence}
M_m(\boldsymbol{k})
\le A N(N-1)b^{-|\boldsymbol{k}|}
\qquad
\text{for all }\boldsymbol{k}\in\mathbb N_0^d.
\end{equation}
A family $(P_{b^m})$ satisfies the uniform coincidence bound if the same constant
$A$ works for every $m$.
\end{definition}

We also impose a common-prefix cutoff on the total common-prefix depth.

\begin{definition}[Common-prefix cutoff]
\label{def:uniform-prefix-cutoff}
An $N$-point set $P_{b^m}$, with $N=b^m$, satisfies the
common-prefix cutoff with parameter $\kappa\in\mathbb N_0$ if every
elementary interval $E\in\mathcal E_{\boldsymbol{k}}$ with
$|\boldsymbol{k}|\ge m+\kappa$ contains at most one point of $P_{b^m}$, counted
with multiplicity.  A family $(P_{b^m})$ satisfies a uniform common-prefix cutoff if the same
$\kappa$ works for every $m$.  Equivalently, for every pair of distinct
indices, all coordinatewise common-prefix lengths are finite and their sum is
strictly less than $m+\kappa$.
\end{definition}

\begin{lemma}
\label{lem:coincidence-zero-net}
\begin{romanenumerate}
\item Every $(0,m,d)$-net in base $b$, not necessarily digital, satisfies
Definition~\ref{def:uniform-coincidence} with $A=1$ and
Definition~\ref{def:uniform-prefix-cutoff} with $\kappa=0$.

\item Let $\boldsymbol{e}=(e_1,\ldots,e_d)\in\mathbb N^d$, and let
$\mathcal S=(\boldsymbol{x}_n)_{n\ge0}$ be a
$(0,\boldsymbol{e},d)$-sequence in base $b$, not necessarily digital.  Put
$t=\sum_{j=1}^d(e_j-1)$ and
$P_{b^m}=(\boldsymbol{x}_n)_{0\le n<b^m}$.
Then $(P_{b^m})_{m\ge1}$ satisfies the uniform coincidence bound with
$A=b^t$ and the uniform common-prefix cutoff with $\kappa=t$.
\end{romanenumerate}
\end{lemma}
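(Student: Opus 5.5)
Part~(i) reduces to a direct occupancy count.  For a $(0,m,d)$-net and any $\boldsymbol{k}$ with $|\boldsymbol{k}|\le m$, every level-$\boldsymbol{k}$ elementary interval is a disjoint union of $b^{m-|\boldsymbol{k}|}$ elementary intervals of volume $b^{-m}$.  Each of these contains exactly one point, so $\nu_m(E)=b^{m-|\boldsymbol{k}|}$.  Hence
\[
M_m(\boldsymbol{k})=b^{|\boldsymbol{k}|}\,b^{m-|\boldsymbol{k}|}\bigl(b^{m-|\boldsymbol{k}|}-1\bigr)
=N\bigl(Nb^{-|\boldsymbol{k}|}-1\bigr)\le N(N-1)b^{-|\boldsymbol{k}|},
\]
where the last step uses $b^{-|\boldsymbol{k}|}\le1$.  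If $|\boldsymbol{k}|\ge m$, pick $\boldsymbol{k}'\le\boldsymbol{k}$ coordinatewise with $|\boldsymbol{k}'|=m$.  Each level-$\boldsymbol{k}$ interval lies inside a level-$\boldsymbol{k}'$ interval, which holds exactly one point.  So $\nu_m(E)\le1$, $M_m(\boldsymbol{k})=0$, and both the coincidence bound with $A=1$ and the cutoff with $\kappa=0$ follow.

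For part~(ii) I use the $(0,\boldsymbol{e},d)$ property with $r=0$.  Fix $\boldsymbol{k}$ and set $c_j=\lfloor k_j/e_j\rfloor$, so $\boldsymbol{q}=(e_jc_j)\le\boldsymbol{k}$ and $M=|\boldsymbol{q}|$.  Two cases arise.

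\textbf{Case $M\le m$.}  The first $b^m$ points split into $b^{m-M}$ consecutive blocks of length $b^M$.  Each block puts exactly one point in every level-$\boldsymbol{q}$ interval, so every level-$\boldsymbol{q}$ interval holds exactly $b^{m-M}$ points.  Every level-$\boldsymbol{k}$ interval $E$ is contained in a level-$\boldsymbol{q}$ interval $F$.  Since $\sum_{E\subseteq F}\nu(E)=\nu(F)$ and $\nu(E)\le\nu(F)$, we get $\sum_{E\subseteq F}\nu(E)(\nu(E)-1)\le\nu(F)(\nu(F)-1)$.  Therefore
\[
M_m(\boldsymbol{k})\le M_m(\boldsymbol{q})=N\bigl(Nb^{-M}-1\bigr)\le N(N-1)b^{-M}.
\]
Since $k_j-e_jc_j\le e_j-1$, we have $|\boldsymbol{k}|-M\le t$, so $b^{-M}\le b^tb^{-|\boldsymbol{k}|}$, giving the bound with $A=b^t$.

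\textbf{Case $M>m$.}  This is the delicate step.  I shrink $\boldsymbol{c}$ to some $\boldsymbol{c}'\le\boldsymbol{c}$ whose $M'=\sum e_jc'_j$ satisfies $m-\max_j e_j<M'\le m$; this is possible because lowering one $c_j$ by one reduces $M$ by $e_j$.  Then $b^m/b^{M'}$ is a positive integer, so every level-$\boldsymbol{q}'$ interval holds exactly $b^{m-M'}$ points.  That is too crude once $b^{m-M'}>1$, so instead I aim for $\nu(E)\le1$ whenever $M\ge m$.  Choose $\boldsymbol{c}''\le\boldsymbol{c}$ with $M''\ge m$ minimal, and apply the $(0,\boldsymbol{e},d)$ property with block length $b^{M''}\ge b^m$, $r=0$.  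The first $b^{M''}$ points already put at most one point in each level-$\boldsymbol{q}''$ interval, and our $b^m$ points form a subset of them.  So $\nu(E)\le1$ for level-$\boldsymbol{k}\ge\boldsymbol{q}''$, and $M_m(\boldsymbol{k})=0$.  In fact this holds whenever $M\ge m$, with no need to reach $M''$ exactly.

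\textbf{Cutoff.}  If $|\boldsymbol{k}|\ge m+t$, then $M\ge|\boldsymbol{k}|-t\ge m$, and the previous step gives at most one point per interval.  This is the cutoff with $\kappa=t$.
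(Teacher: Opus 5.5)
Your proposal is correct and follows the paper's proof essentially step for step. Part (i) uses the same occupancy count. Part (ii) uses the same reduction to $\boldsymbol{q}=(e_j\lfloor k_j/e_j\rfloor)_j$ with $|\boldsymbol{k}|-|\boldsymbol{q}|\le t$, the same block decomposition when $|\boldsymbol{q}|\le m$, and the same argument via the first $b^{|\boldsymbol{q}|}$ points when $|\boldsymbol{q}|\ge m$. The detour through $\boldsymbol{c}'$ and $\boldsymbol{c}''$ in the case $M>m$ is superfluous: as you observe at the end, applying the $(0,\boldsymbol{e},d)$ property directly to $\boldsymbol{c}$ with $r=0$ already gives at most one point per level-$\boldsymbol{q}$ interval, and this is exactly what the paper does.
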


\begin{proof}
For part~(i), put $D=|\boldsymbol{k}|$.  If $D\le m$, every elementary
interval in $\mathcal E_{\boldsymbol{k}}$ contains exactly $b^{m-D}$ points,
and hence
\[
M_m(\boldsymbol{k})
=
N\bigl(b^{m-D}-1\bigr)
\le N(N-1)b^{-D}.
\]
If $D\ge m$, each interval in $\mathcal E_{\boldsymbol{k}}$ is contained in
an elementary interval of volume $b^{-m}$, which contains exactly one point.
Thus $M_m(\boldsymbol{k})=0$, and the same one-point property gives the
common-prefix cutoff with $\kappa=0$.

For part~(ii), fix $m$ and $\boldsymbol{k}$, put $D=|\boldsymbol{k}|$, and set
 $q_j=e_j\lfloor k_j/e_j\rfloor$ and $Q=|\boldsymbol{q}|$.
Then $\boldsymbol{q}\le\boldsymbol{k}$ coordinatewise and $D-t\le Q\le D$.
Hence every pair counted by $M_m(\boldsymbol{k})$ lies in the same
level-$\boldsymbol{q}$ elementary interval.

Suppose first that $Q\le m$.  Partition the first $b^m$ indices into
$b^{m-Q}$ consecutive index intervals of length $b^Q$.  By the
$(0,\boldsymbol{e},d)$-sequence property, every level-$\boldsymbol{q}$
elementary interval contains exactly one point from each such interval, hence exactly
$b^{m-Q}$ points of $P_{b^m}$.  Therefore
\[
\begin{aligned}
M_m(\boldsymbol{k})
&\le N\bigl(b^{m-Q}-1\bigr)\\
&\le N(N-1)b^{-Q}
\le b^tN(N-1)b^{-D}.
\end{aligned}
\]
If $Q>m$, every level-$\boldsymbol{q}$ elementary interval contains exactly
one point among the first $b^Q$ points, and hence at most one point of $P_{b^m}$.
Thus $M_m(\boldsymbol{k})=0$.  This proves the coincidence bound with $A=b^t$.
Finally, if $D\ge m+t$, then $Q\ge m$.  The same argument shows that every
level-$\boldsymbol{k}$ elementary interval contains at most one point of
$P_{b^m}$.  Hence the cutoff holds with $\kappa=t$.
\end{proof}

\subsection{Minimum-distance lower tail}

\begin{theorem}
\label{thm:owen-lower-tail}
Let $m\ge1$, and let $P_{b^m}=(\boldsymbol{x}_n)_{0\le n<N}$ be an $N$-point set, where
$N=b^m$, satisfying Definitions~\ref{def:uniform-coincidence}
and~\ref{def:uniform-prefix-cutoff} with constants $A$ and $\kappa$, and let
$\widetilde P_{b^m}=(\widetilde{\boldsymbol{x}}_n)_{0\le n<N}$ be obtained by
full Owen scrambling.  Put
$R_m:=R_\infty(\widetilde P_{b^m})$.  Then, for every $r>0$,
\begin{align}
\PP(R_m\le r)
&\le
\frac12
\sum_{|\boldsymbol{\ell}|<m+\kappa}
M_m(\boldsymbol{\ell})
\prod_{j=1}^d
\min\{1,(b^{\ell_j+1}r)^2\}
\label{eq:prefix-vector-tail}\\
&\le
C_{b,d,A,\kappa}N^3(1+m)^{d-1}r^{2d}.
\notag
\end{align}
\end{theorem}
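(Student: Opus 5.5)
The plan is a union bound over pairs, organized by the exact common-prefix vector of each pair, followed by Lemma~\ref{lem:owen-two-point} and the coincidence bound.

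First, write
\[
\PP(R_m\le r)\le\sum_{0\le n<n'<N}\PP\bigl(\lVert\widetilde{\boldsymbol{x}}_n-\widetilde{\boldsymbol{x}}_{n'}\rVert_\infty\le r\bigr)
=\frac12\sum_{n\ne n'}\PP\bigl(\lVert\widetilde{\boldsymbol{x}}_n-\widetilde{\boldsymbol{x}}_{n'}\rVert_\infty\le r\bigr).
\]
Next, group the ordered pairs $(n,n')$ according to their exact common-prefix vector $\boldsymbol{\ell}=\boldsymbol{\ell}(\boldsymbol{x}_n,\boldsymbol{x}_{n'})$. Definition~\ref{def:uniform-prefix-cutoff} guarantees that every entry of $\boldsymbol{\ell}$ is finite and that $|\boldsymbol{\ell}|<m+\kappa$. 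So only such vectors occur, and for each pair Lemma~\ref{lem:owen-two-point} bounds the probability by $\prod_j\min\{1,(b^{\ell_j+1}r)^2\}$.

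The number of ordered pairs with exact prefix vector $\boldsymbol{\ell}$ is at most the number of ordered pairs sharing at least the first $\ell_j$ digits in every coordinate. These are exactly the pairs lying in a common level-$\boldsymbol{\ell}$ elementary interval, whose number is $M_m(\boldsymbol{\ell})$. For digital points, the same identification holds with interval membership read from the specified output-digit expansions, as in the convention adopted for $\nu_m$. Replacing the exact counts by $M_m(\boldsymbol{\ell})$ gives \eqref{eq:prefix-vector-tail}. The point needing care is that the exact-shell count is dominated by the cumulative count.

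For the second inequality, bound $\min\{1,x^2\}\le x^2$ and use \eqref{eq:uniform-coincidence}, $M_m(\boldsymbol{\ell})\le AN^2b^{-|\boldsymbol{\ell}|}$. Each term is then at most
\[
AN^2b^{-|\boldsymbol{\ell}|}\,b^{2|\boldsymbol{\ell}|+2d}r^{2d}=Ab^{2d}N^2b^{|\boldsymbol{\ell}|}r^{2d}.
\]
Group the vectors by $D=|\boldsymbol{\ell}|$. There are $\binom{D+d-1}{d-1}\le(D+1)^{d-1}$ vectors with $|\boldsymbol{\ell}|=D$. Hence the sum is at most
\[
Ab^{2d}N^2r^{2d}\sum_{D<m+\kappa}(D+1)^{d-1}b^D\le C\,N^2r^{2d}(m+\kappa)^{d-1}b^{m+\kappa},
\]
by the geometric growth of $b^D$. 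Since $b^m=N$ and $(m+\kappa)^{d-1}\le C_\kappa(1+m)^{d-1}$, this is $C_{b,d,A,\kappa}N^3(1+m)^{d-1}r^{2d}$.

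The main obstacle is the geometric-series step. It works only because the gain $b^{-|\boldsymbol{\ell}|}$ from the coincidence bound is beaten by the loss $b^{2|\boldsymbol{\ell}|}$ in the two-point bound, so the sum is dominated by the top shell. The cutoff is what stops that shell at depth $m+\kappa$, and the polynomial factor in $m$ is the count of vectors in that shell.
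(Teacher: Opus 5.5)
Your proposal is correct and follows the paper's proof essentially step for step. Both arguments take a union bound over ordered pairs grouped by their exact common-prefix vector, use the cutoff to restrict to finite $\boldsymbol{\ell}$ with $|\boldsymbol{\ell}|<m+\kappa$, dominate each exact-shell count by $M_m(\boldsymbol{\ell})$, and apply Lemma~\ref{lem:owen-two-point}; the coincidence bound then reduces the second inequality to $\sum_{D<m+\kappa}\binom{D+d-1}{d-1}b^D$, which the cutoff truncates at order $(1+m)^{d-1}b^{m+\kappa}$.
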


\begin{proof}
For an ordered pair $(n,n')$, let
$\boldsymbol{\ell}=(\ell(x_{n,1},x_{n',1}),\ldots,
\ell(x_{n,d},x_{n',d}))$.  By the equivalent pairwise formulation of the
common-prefix cutoff, $\boldsymbol{\ell}\in\mathbb N_0^d$ and
$|\boldsymbol{\ell}|<m+\kappa$.
For each such vector $\boldsymbol{\ell}$, the number of ordered pairs whose common-prefix vector is exactly $\boldsymbol{\ell}$ is at most
$M_m(\boldsymbol{\ell})$.  Lemma~\ref{lem:owen-two-point}, followed by the
union bound, therefore gives \eqref{eq:prefix-vector-tail}.

Using the coincidence bound~\eqref{eq:uniform-coincidence} and
$\min\{1,(b^{\ell_j+1}r)^2\}\le b^{2\ell_j+2}r^2$,
we obtain
\[
\begin{aligned}
\PP(R_m\le r)
&\le
\frac{A N(N-1)}2
\sum_{|\boldsymbol{\ell}|<m+\kappa}
 b^{-|\boldsymbol{\ell}|}
\prod_{j=1}^d b^{2\ell_j+2}r^2\\
&=
\frac{A b^{2d}}2N(N-1)r^{2d}
\sum_{|\boldsymbol{\ell}|<m+\kappa}
 b^{-|\boldsymbol{\ell}|}b^{2|\boldsymbol{\ell}|}\\
&=
\frac{A b^{2d}}2N(N-1)r^{2d}
\sum_{D=0}^{m+\kappa-1}
\binom{D+d-1}{d-1}b^D\\
&\le C_{b,d,A,\kappa}N^3(1+m)^{d-1}r^{2d}.
\end{aligned}
\]
This proves the second inequality in
\eqref{eq:prefix-vector-tail}.
\end{proof}

\begin{corollary}
\label{cor:owen-probabilistic-bounds}
Let $b,d,t$ be fixed, and let $(P_{b^m})_{m\ge t}$ be a family of deterministic
$(t,m,d)$-nets in base $b$, not necessarily digital.  Put $N=b^m$.  Assume that the
family satisfies the uniform coincidence bound with constant $A$ and the
uniform common-prefix cutoff with parameter $\kappa$.  If $\widetilde P_{b^m}$ is
obtained by full Owen scrambling, then
\[
\begin{aligned}
R_\infty(\widetilde P_{b^m})
&=\Omega_{\PP}\!\left(
N^{-3/(2d)}(\log N)^{-(d-1)/(2d)}
\right),
&
R_\infty(\widetilde P_{b^m})
&=O_{\PP}\!\left(N^{-3/(2d)}\right),
\end{aligned}
\]
\[
h_\infty(\widetilde P_{b^m})\asymp_{b,d,t}N^{-1/d},
\]
and
\[
\begin{aligned}
\rho_\infty(\widetilde P_{b^m})
&=\Omega_{\PP}\!\left(N^{1/(2d)}\right),
&
\rho_\infty(\widetilde P_{b^m})
&=O_{\PP}\!\left(
N^{1/(2d)}(\log N)^{(d-1)/(2d)}
\right).
\end{aligned}
\]
\end{corollary}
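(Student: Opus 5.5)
The plan is to combine three earlier results: Theorem~\ref{thm:owen-lower-tail} gives the lower tail of the minimum distance, Corollary~\ref{cor:owen-upper-net} gives the upper bound, and the net covering bound~\eqref{eq:net-covering} together with~\eqref{eq:universal-geometric-bounds} handles the covering radius. The mesh-ratio statements then follow by division.

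\emph{Upper bound on $R_\infty$.} Corollary~\ref{cor:owen-upper-net} applies to every $(t,m,d)$-net and gives $R_\infty(\widetilde P_{b^m})=O_{\PP}(N^{-3/(2d)})$ directly. By the covering lower bound in~\eqref{eq:universal-geometric-bounds}, it also gives $\rho_\infty=\Omega_{\PP}(N^{1/(2d)})$; this is exactly how Theorem~\ref{thm:owen-upper} already concludes.

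\emph{Lower bound on $R_\infty$.} Given $\varepsilon>0$, set $r=c\,N^{-3/(2d)}(\log N)^{-(d-1)/(2d)}$. The second inequality of Theorem~\ref{thm:owen-lower-tail} gives
\[
\PP(R_m\le r)\le C_{b,d,A,\kappa}N^3(1+m)^{d-1}c^{2d}N^{-3}(\log N)^{-(d-1)}.
\]
Since $m=\log N/\log b$, the factor $(1+m)^{d-1}(\log N)^{-(d-1)}$ is bounded by a constant depending on $b,d$. Hence the probability is at most $C'c^{2d}$, and choosing $c$ small makes it at most $\varepsilon$. This proves the $\Omega_{\PP}$ bound, and the constants are uniform in $m$ because $A$ and $\kappa$ are uniform.

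\emph{Covering radius.} Owen scrambling maps a $(t,m,d)$-net to a $(t,m,d)$-net, since it permutes elementary intervals of each level bijectively. So~\eqref{eq:net-covering} gives $h_\infty\le b^{(d-1+t)/d}N^{-1/d}$ deterministically, and~\eqref{eq:universal-geometric-bounds} gives $h_\infty\ge N^{-1/d}/2$. Together these give $h_\infty\asymp_{b,d,t}N^{-1/d}$.

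\emph{Mesh-ratio upper bound.} Write $\rho_\infty=2h_\infty/R_\infty$. Using the deterministic upper bound on $h_\infty$ and the $\Omega_{\PP}$ lower bound on $R_\infty$ gives $\rho_\infty=O_{\PP}(N^{1/(2d)}(\log N)^{(d-1)/(2d)})$.

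No step is a real obstacle. The only points to check are that the logarithmic factor cancels uniformly in $m$, and that the scrambled set still satisfies the net property for the covering bound.
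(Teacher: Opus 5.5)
Your proposal is correct and follows the paper's proof essentially step for step. The upper bound on $R_\infty$ comes from Corollary~\ref{cor:owen-upper-net} with a fixed $\lambda$. The lower bound comes from the second inequality of Theorem~\ref{thm:owen-lower-tail} with $r$ a small constant times $N^{-3/(2d)}(\log N)^{-(d-1)/(2d)}$; the paper writes $(1+m)$ in place of $\log N$, which is equivalent since $m=\log_b N$. The covering radius follows from preservation of the net property together with \eqref{eq:universal-geometric-bounds} and \eqref{eq:net-covering}, and the mesh-ratio bounds follow by division.
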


\begin{proof}
In Corollary~\ref{cor:owen-upper-net}, choose $\lambda$ to be a sufficiently
large fixed constant.  This gives
\[
R_\infty(\widetilde P_{b^m})=O_{\PP}\!\left(N^{-3/(2d)}\right).
\]
Likewise, applying Theorem~\ref{thm:owen-lower-tail} with
$r=N^{-3/(2d)}(1+m)^{-(d-1)/(2d)}\tau^{-1}$
gives an exceptional probability at most
$C_{b,d,A,\kappa}\tau^{-2d}$.  Choosing $\tau$ to be a sufficiently large
fixed constant yields
\[
R_\infty(\widetilde P_{b^m})
=\Omega_{\PP}\!\left(
N^{-3/(2d)}(1+m)^{-(d-1)/(2d)}
\right).
\]
Since $m=\log_b N$, this is the asserted lower bound.  Scrambling preserves
the net property, so \eqref{eq:universal-geometric-bounds} and
\eqref{eq:net-covering} give
$h_\infty(\widetilde P_{b^m})\asymp_{b,d,t}N^{-1/d}$.  Combining these covering
bounds with the minimum-distance bounds gives the two mesh-ratio bounds.
\end{proof}

\begin{corollary}
\label{cor:owen-badic-as}
Let $\mathcal S=(\boldsymbol{x}_n)_{n\ge0}$ be a $(t,d)$-sequence in
base $b$, not necessarily digital, and suppose that, for every $m$, its first
$b^m$ points satisfy the uniform coincidence bound and the uniform common-prefix
cutoff with constants $A$ and $\kappa$ independent of $m$.  Apply a single
full Owen scrambling to the whole sequence, and set
$\widetilde P_n=(\widetilde{\boldsymbol{x}}_j)_{0\le j<n}$ for $n\ge1$.
Then, almost surely,
\[
\frac{\log R_\infty(\widetilde P_n)}{\log n}
\longrightarrow -\frac{3}{2d},
\qquad
\frac{\log\rho_\infty(\widetilde P_n)}{\log n}
\longrightarrow \frac{1}{2d}
\]
as $n\to\infty$.  In particular, the scrambled sequence is almost surely not
quasi-uniform.
\end{corollary}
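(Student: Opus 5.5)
The plan is to work along the geometric subsequence $n=b^m$: obtain almost-sure bounds there from the exponential-in-$m$ tail estimates already proved, using the first Borel--Cantelli lemma (Lemma~\ref{lem:first-borel-cantelli}). Lemma~\ref{lem:initial-segment-comparison} then transfers these bounds to all $n$, with $M_r=b^r$. Since consecutive terms differ only by the factor $b$, this comparison costs a constant factor, which is negligible on the $\log n$ scale.

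\textbf{Upper bound on $R_\infty$.} This is already contained in the proof of Corollary~\ref{cor:owen-sequence-nonqu}. Almost surely, for all large $m$,
\[
R_\infty(\widetilde P_{b^m})\le C\,b^{-3m/(2d)}m^{1/2}(\log(2+m))^{(1+\varepsilon)/2}.
\]
This gives
\[
\limsup_m \frac{\log R_\infty(\widetilde P_{b^m})}{m\log b}\le -\frac{3}{2d}.
\]

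\textbf{Lower bound on $R_\infty$.} Apply Theorem~\ref{thm:owen-lower-tail} to the first $b^m$ points, which by hypothesis satisfy the coincidence bound and the cutoff with constants independent of $m$. Take $r_m=b^{-m(3/(2d)+\delta)}$ for a fixed $\delta>0$. The theorem then bounds
\[
\PP\bigl(R_\infty(\widetilde P_{b^m})\le r_m\bigr)\le C N^3(1+m)^{d-1}r_m^{2d}=C(1+m)^{d-1}b^{-2d\delta m},
\]
which is summable in $m$. By Borel--Cantelli, almost surely $R_\infty(\widetilde P_{b^m})>r_m$ for all large $m$. Taking $\delta\downarrow0$ along a countable sequence gives
\[
\liminf_m \frac{\log R_\infty(\widetilde P_{b^m})}{m\log b}\ge -\frac{3}{2d}.
\]
Lemma~\ref{lem:initial-segment-comparison} then yields the first limit along all $n$, since $\log b^{m+1}/\log b^m\to1$.

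\textbf{Mesh ratio.} For the covering radius we use deterministic bounds. From \eqref{eq:universal-geometric-bounds}, $h_\infty(\widetilde P_n)\ge \tfrac12 n^{-1/d}$. For the upper bound, scrambling a $(t,d)$-sequence yields a $(t,d)$-sequence, so each $\widetilde P_{b^m}$ is a $(t,m,d)$-net and \eqref{eq:net-covering} gives $h_\infty(\widetilde P_{b^m})\le C b^{-m/d}$. Monotonicity via Lemma~\ref{lem:initial-segment-comparison} extends this to $h_\infty(\widetilde P_n)\le C' n^{-1/d}$. Hence $\log h_\infty(\widetilde P_n)/\log n\to-1/d$ surely, and
\[
\rho_\infty=\frac{2h_\infty}{R_\infty}\quad\text{gives}\quad \frac{\log\rho_\infty(\widetilde P_n)}{\log n}\to -\frac1d+\frac{3}{2d}=\frac{1}{2d}.
\]
Because this exponent is positive, $\rho_\infty(\widetilde P_n)\to\infty$, so the scrambled sequence is almost surely not quasi-uniform.

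The main point requiring care is that the scrambling is single and shared across all $m$. Borel--Cantelli needs only marginal probabilities, so dependence across levels is harmless. Theorem~\ref{thm:owen-lower-tail} applies to each prefix $P_{b^m}$ under the same scrambling law, since the first $b^m$ scrambled points are exactly the full Owen scrambling of the first $b^m$ original points. No further obstacle is expected.
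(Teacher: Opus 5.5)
Your proposal is correct and follows the paper's proof: Borel--Cantelli along $n=b^m$ using Theorem~\ref{thm:owen-lower-tail} and Corollary~\ref{cor:owen-upper-net}, the deterministic net covering bounds, and transfer to all $n$ via Lemma~\ref{lem:initial-segment-comparison}. The only difference is cosmetic. You take the lower-tail threshold $b^{-m(3/(2d)+\delta)}$ and intersect over countably many $\delta$, whereas the paper uses the polylogarithmically corrected threshold $b^{-3m/(2d)}(1+m)^{-(d-1)/(2d)}m^{-1}$ together with $\lambda=m^2$, which gives $\log R_\infty(\widetilde P_{b^m})=-\tfrac{3}{2d}\log b^m+O(\log m)$ in one step.
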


\begin{proof}
First let $n=b^m$.  Apply Theorem~\ref{thm:owen-lower-tail} with
$r=b^{-3m/(2d)}(1+m)^{-(d-1)/(2d)}m^{-1}$
and Corollary~\ref{cor:owen-upper-net} with $\lambda=m^2$.
For all sufficiently large $m$, we have $1\le\lambda\le b^{m/d}$, and the two
exceptional probabilities are $O(m^{-2d}+m^{-2})$, hence summable.  The first
Borel--Cantelli lemma therefore gives, almost surely,
\[
\log R_\infty(\widetilde P_{b^m})
=-\frac{3}{2d}\log b^m+O(\log m).
\]
Moreover, scrambling preserves the $(t,m,d)$-net property, so
$h_\infty(\widetilde P_{b^m})\asymp_{b,d,t}b^{-m/d}$.
Since $\log m=o(m)$, Lemma~\ref{lem:initial-segment-comparison}
transfers the exponents of $R_\infty$ and $h_\infty$ from $n=b^m$ to all
$n$.  This gives the asserted limits.
\end{proof}

\begin{remark}
Let $\mathcal H$ be a Halton sequence in pairwise coprime bases
$b_1,\ldots,b_d$.  Apply a single full Owen scrambling to each coordinate in
its respective base, independently across coordinates, and let
$\widetilde P_N$ denote the first $N$ scrambled points.  Then the conclusions
of Corollary~\ref{cor:owen-probabilistic-bounds} hold as $N\to\infty$, with
constants depending only on $d$ and the bases.  The conclusions of
Corollary~\ref{cor:owen-badic-as} hold for the whole scrambled sequence.
For $\boldsymbol{k}\in\mathbb N_0^d$, put
$B(\boldsymbol{k})=\prod_{j=1}^d b_j^{k_j}$.
By the Chinese remainder theorem, every level-$\boldsymbol{k}$ mixed-base
elementary interval contains either
$\lfloor N/B(\boldsymbol{k})\rfloor$ or
$\lceil N/B(\boldsymbol{k})\rceil$ of the first $N$ Halton points.  Hence the
corresponding coincidence count is at most
$N(N-1)B(\boldsymbol{k})^{-1}$, and it vanishes when
$B(\boldsymbol{k})\ge N$.  Moreover,
\[
\sum_{B(\boldsymbol{k})<N}B(\boldsymbol{k})
\ll_{b_1,\ldots,b_d}N(1+\log N)^{d-1}.
\]
These estimates give the same lower-tail bound as
Theorem~\ref{thm:owen-lower-tail}.  Choosing
$k_j=\lfloor\log_{b_j}N^{1/d}\rfloor$ in the occupancy argument gives the
same close-pair and covering estimates.  The almost-sure conclusions for all
initial segments then follow from
Lemma~\ref{lem:initial-segment-comparison} applied along a geometric
subsequence.
\end{remark}

\subsection{Digital-net specialization}

For digital point sets, the cumulative coincidence count has an algebraic
kernel representation.  Its integrality makes the common-prefix cutoff an
automatic consequence of the coincidence bound~\eqref{eq:uniform-coincidence}.

Suppose that $b$ is a prime power, and write the digital point set as
$P_{b^m}=(\boldsymbol{x}_i)_{0\le i<N}$, using its specified output-digit
expansions.  Identify the indices $0,\ldots,N-1$ with their $m$ base-$b$
digits, and let $\oplus$ denote digitwise addition.  For
$\boldsymbol{k}\in\mathbb N_0^d$, let
\[
V_m(\boldsymbol{k})
=
\left\{0\le i<N:
\ell\bigl(0,x_{i,j}\bigr)\ge k_j
\text{ for }j=1,\ldots,d\right\}.
\]
By the digital group structure,
\[
\ell\bigl(x_{i,j},x_{i\oplus h,j}\bigr)
=
\ell\bigl(0,x_{h,j}\bigr),
\qquad j=1,\ldots,d,
\quad 0\le i,h<N.
\]
Thus $\boldsymbol{x}_i$ and $\boldsymbol{x}_{i\oplus h}$ lie in the same
level-$\boldsymbol{k}$ elementary interval precisely when
$h\in V_m(\boldsymbol{k})$.  Every $h$
occurs in exactly $N$ ordered pairs $(i,i\oplus h)$, and $h=0$ accounts for
the diagonal pairs.  Hence
\begin{equation}\label{eq:digital-coincidence-kernel}
M_m(\boldsymbol{k})
=
N\bigl(|V_m(\boldsymbol{k})|-1\bigr)
\qquad
\text{for all }\boldsymbol{k}\in\mathbb N_0^d.
\end{equation}

\begin{lemma}
\label{lem:digital-prefix-cutoff}
Let $P_{b^m}$ be digital and satisfy
Definition~\ref{def:uniform-coincidence} with constant $A$.  Then $P_{b^m}$
satisfies Definition~\ref{def:uniform-prefix-cutoff} with parameter
$\kappa=\lceil\log_b A\rceil$.
\end{lemma}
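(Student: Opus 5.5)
The plan is to combine the kernel identity \eqref{eq:digital-coincidence-kernel} with the integrality of $|V_m(\boldsymbol{k})|$. Fix $\boldsymbol{k}\in\mathbb N_0^d$ with $|\boldsymbol{k}|\ge m+\kappa$, where $\kappa=\lceil\log_b A\rceil$. It suffices to show that $M_m(\boldsymbol{k})=0$. Indeed, $M_m(\boldsymbol{k})$ counts ordered pairs of different indices in a common level-$\boldsymbol{k}$ elementary interval. So $M_m(\boldsymbol{k})=0$ says precisely that each such interval contains at most one point, counted with multiplicity, which is Definition~\ref{def:uniform-prefix-cutoff}.

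By \eqref{eq:digital-coincidence-kernel} and the coincidence bound \eqref{eq:uniform-coincidence},
\[
 N\bigl(|V_m(\boldsymbol{k})|-1\bigr)=M_m(\boldsymbol{k})\le AN(N-1)b^{-|\boldsymbol{k}|},
\]
hence
\[
 |V_m(\boldsymbol{k})|-1\le A(N-1)b^{-|\boldsymbol{k}|}
 < A\,b^{m}b^{-m-\kappa}=A b^{-\kappa}\le 1,
\]
using $N-1<b^m$ and $b^{\kappa}\ge A$. Since $|V_m(\boldsymbol{k})|-1$ is a nonnegative integer ($0\in V_m(\boldsymbol{k})$ always), the strict inequality forces $|V_m(\boldsymbol{k})|=1$. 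Therefore $M_m(\boldsymbol{k})=0$.

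The only point needing care is whether multiplicities and boundary conventions are handled consistently. Both $M_m$ and the kernel identity count coincident output-digit expansions, with $\ell=\infty$ counted as lying in every interval, so the identity holds for all $\boldsymbol{k}$, including large ones. Repeated points correspond to nonzero $h$ with $\boldsymbol{x}_h=\boldsymbol{0}$ in digits, and these would lie in $V_m(\boldsymbol{k})$; they are excluded by the same argument. Finally, the equivalent pairwise formulation in Definition~\ref{def:uniform-prefix-cutoff} follows because a pair with common-prefix vector $\boldsymbol{\ell}$ lies in a common level-$\boldsymbol{\ell}$ interval.
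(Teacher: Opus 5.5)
Your proof is correct and is essentially the paper's argument: the kernel identity \eqref{eq:digital-coincidence-kernel} together with the coincidence bound gives $|V_m(\boldsymbol{k})|-1\le A(N-1)b^{-|\boldsymbol{k}|}<1$, and integrality then forces $V_m(\boldsymbol{k})=\{0\}$. The only difference is that you conclude through $M_m(\boldsymbol{k})=0$, whereas the paper concludes through injectivity of the level-$\boldsymbol{k}$ prefix map; both say that every level-$\boldsymbol{k}$ elementary interval contains at most one point.
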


\begin{proof}
If $|\boldsymbol{k}|\ge m+\lceil\log_b A\rceil$, then
\eqref{eq:digital-coincidence-kernel} and the coincidence bound~\eqref{eq:uniform-coincidence} give
$|V_m(\boldsymbol{k})|-1\le A(N-1)b^{-|\boldsymbol{k}|}<1$.
The left-hand side is a nonnegative integer, so $V_m(\boldsymbol{k})=\{0\}$.
Thus the prefix map at level $\boldsymbol{k}$ is injective, so every
level-$\boldsymbol{k}$ elementary interval contains at most one point of $P_{b^m}$,
and the required cutoff follows.
\end{proof}

\begin{corollary}
\label{cor:owen-digital-specialization}
Let $b,d,t$ be fixed, and let $(P_{b^m})_{m\ge t}$ be a family of deterministic
digital $(t,m,d)$-nets in base $b$.  If the family satisfies the uniform coincidence
bound with a constant $A$ independent of $m$, then all conclusions of
Corollary~\ref{cor:owen-probabilistic-bounds} hold, with
$\kappa=\lceil\log_b A\rceil$.
\end{corollary}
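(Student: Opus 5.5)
The plan is to reduce the statement directly to Corollary~\ref{cor:owen-probabilistic-bounds}. That corollary already gives all the listed conclusions for any family of deterministic $(t,m,d)$-nets in base $b$, digital or not, provided two hypotheses hold: the uniform coincidence bound with some constant $A$, and the uniform common-prefix cutoff with some parameter $\kappa$ independent of $m$. The first hypothesis is assumed in the present statement. So the only work is to verify the cutoff, with $\kappa=\lceil\log_b A\rceil$.

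For each $m$, I apply Lemma~\ref{lem:digital-prefix-cutoff} to the digital net $P_{b^m}$. Since $P_{b^m}$ is digital and satisfies Definition~\ref{def:uniform-coincidence} with the constant $A$, the lemma yields Definition~\ref{def:uniform-prefix-cutoff} with parameter $\lceil\log_b A\rceil$. This parameter depends only on $A$, hence not on $m$, so the family satisfies the uniform common-prefix cutoff.

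A few consistency checks remain.
\begin{itemize}
\item \emph{Expansion conventions.} The coincidence counts $M_m(\boldsymbol{k})$ and the common-prefix lengths must both be taken with respect to the specified output-digit expansions. This is the convention of Definition~\ref{def:common-prefix-length} and of the kernel identity \eqref{eq:digital-coincidence-kernel}, so the hypotheses of Theorem~\ref{thm:owen-lower-tail} are met in exactly the sense used there.
\item \emph{Scrambling acts on the output tree.} Full Owen scrambling of a digital net is understood as acting on these output-digit trees, consistent with Lemma~\ref{lem:owen-two-point}.
\item \emph{Dependence of constants.} With $\kappa$ now a function of $A$, the implicit constants from Corollary~\ref{cor:owen-probabilistic-bounds} depend only on $b,d,t,A$.
\end{itemize}

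I do not expect a real obstacle, since the integrality argument is already packaged in Lemma~\ref{lem:digital-prefix-cutoff}. The one subtle point is the boundary case in which a fixed digit expansion ends in digits $b-1$. This case is handled by the paper's stated convention that, for digital points, membership at $b$-adic boundaries is determined by the specified expansions.
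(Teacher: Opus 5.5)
Your proposal is correct and matches the paper's proof: Lemma~\ref{lem:digital-prefix-cutoff} supplies the uniform common-prefix cutoff with $\kappa=\lceil\log_b A\rceil$, after which Corollary~\ref{cor:owen-probabilistic-bounds} applies directly. Your extra consistency checks on output-digit expansions and on the dependence of the constants are sound but not needed beyond this one-line reduction.
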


\begin{proof}
Lemma~\ref{lem:digital-prefix-cutoff} supplies the required uniform
common-prefix cutoff, so Corollary~\ref{cor:owen-probabilistic-bounds} applies.
\end{proof}

\section{Matrix and linear scrambling}
\label{sec:linear}

This section studies matrix-based randomizations in base $2$.  We first prove
a general one-sided estimate for matrix and linear scrambling in dimension
$d\ge2$.  We then introduce a balanced-prefix affine-tail model that permits
sharp two-sided analysis and show that one-dimensional matrix and linear
scrambling both fit this model.

\begin{remark}
The restriction to base $2$ comes from
Lemmas~\ref{lem:exact-digital-distance}
and~\ref{lem:small-digital-difference-count}, which use the independent-sign
representation of a binary digitwise difference.  The lower-triangular
algebra extends to finite fields, but for $b>2$ a different estimate for small
real digit differences is required.
\end{remark}

Throughout this section, $\oplus$ denotes addition in $\mathbb F_2$ and its
componentwise extension to binary vectors.  For a binary sequence
$\boldsymbol{y}=(y_r)_{r\ge1}$, write
$[\boldsymbol{y}]:=\sum_{r\ge1}y_r2^{-r}$.
For a finite vector $\boldsymbol{v}\in\mathbb F_2^M$, we use the same notation
and set $[\boldsymbol{v}]:=\sum_{r=1}^M v_r2^{-r}$.  Equivalently, we regard
$\boldsymbol{v}$ as being zero-padded.
Let $C_1,\ldots,C_d\in\mathbb F_2^{\mathbb N\times m}$ be the generating
matrices, with finite matrices extended by zero rows.  For
$\boldsymbol{a}\in\mathbb F_2^m$, set
\[
x_{\boldsymbol{a},j}=[C_j\boldsymbol{a}],
\qquad
\boldsymbol{x}_{\boldsymbol{a}}
=(x_{\boldsymbol{a},1},\ldots,x_{\boldsymbol{a},d}).
\]
We index the resulting digital point set by its input vectors and write
\[
P_{2^m}=(\boldsymbol{x}_{\boldsymbol{a}})_{\boldsymbol{a}\in\mathbb F_2^m}.
\]
\subsection{Digital distance}

For $\boldsymbol{D}\in\mathbb F_2^M$, put
\[
\mu_M(\boldsymbol{D})
=
\min_{\boldsymbol{v}\in\mathbb F_2^M}
\bigl|[\boldsymbol{v}\oplus\boldsymbol{D}]-[\boldsymbol{v}]\bigr|.
\]

\begin{lemma}
\label{lem:exact-digital-distance}
For $\boldsymbol{D}\in\mathbb F_2^M\setminus\{\boldsymbol{0}\}$, let
$\nu(\boldsymbol{D}):=\min\{r:D_r=1\}$.
Then
\begin{equation}\label{eq:exact-digital-distance}
\mu_M(\boldsymbol{D})
=
2^{-\nu(\boldsymbol{D})}
-
\sum_{\substack{r>\nu(\boldsymbol{D})\\D_r=1}}2^{-r}.
\end{equation}
\end{lemma}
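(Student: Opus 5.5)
The plan is to use the independent-sign representation of a binary digitwise difference. Write $\nu=\nu(\boldsymbol{D})$ and let $S=\{r:D_r=1\}$. For any $\boldsymbol{v}\in\mathbb F_2^M$, the digits of $\boldsymbol{v}\oplus\boldsymbol{D}$ and $\boldsymbol{v}$ agree at $r\notin S$. At $r\in S$ they differ, and $(v_r\oplus 1)-v_r=\varepsilon_r\in\{+1,-1\}$, where $\varepsilon_r=+1$ exactly when $v_r=0$. Hence
\[
 [\boldsymbol{v}\oplus\boldsymbol{D}]-[\boldsymbol{v}]
 =\sum_{r\in S}\varepsilon_r2^{-r}.
\]
As $\boldsymbol{v}$ ranges over $\mathbb F_2^M$, the sign vector $(\varepsilon_r)_{r\in S}$ ranges over all of $\{\pm1\}^S$, since the digits $v_r$ for $r\in S$ can be chosen freely. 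So $\mu_M(\boldsymbol{D})$ is the minimum of $\bigl|\sum_{r\in S}\varepsilon_r2^{-r}\bigr|$ over all sign choices.

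Next I would bound this quantity from below. By the symmetry $\varepsilon\mapsto-\varepsilon$, we may assume $\varepsilon_\nu=+1$. Then
\[
 \sum_{r\in S}\varepsilon_r2^{-r}
 \ge 2^{-\nu}-\sum_{r\in S,\,r>\nu}2^{-r}.
\]
The right-hand side is positive, because
\[
 \sum_{r>\nu,\,r\in S}2^{-r}\le\sum_{r=\nu+1}^{M}2^{-r}<2^{-\nu}.
\]
So the absolute value is at least $2^{-\nu}-\sum_{r\in S,\,r>\nu}2^{-r}$.

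To see that this lower bound is attained, take $\varepsilon_\nu=+1$ and $\varepsilon_r=-1$ for the other $r\in S$. Concretely, $v_\nu=0$, $v_r=1$ for $r\in S\setminus\{\nu\}$, and the remaining digits are arbitrary. This gives exactly the value in~\eqref{eq:exact-digital-distance}.

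I do not expect a real obstacle here. The only points needing care are the sign bookkeeping (that every sign pattern is realizable by some $\boldsymbol{v}$) and the strict geometric-series inequality. That inequality is what guarantees the leading term dominates, so the minimum of the absolute value is achieved with the leading sign positive and no cancellation past zero occurs.
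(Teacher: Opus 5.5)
Your proposal is correct and follows essentially the same route as the paper's proof: the independent-sign representation $\sum_{r:D_r=1}\varepsilon_r2^{-r}$, the observation that the leading term $2^{-\nu(\boldsymbol{D})}$ strictly dominates the remaining weights, and attainment by taking the opposite sign on every lower nonzero digit. You also spell out the realizability of all sign patterns, the symmetry reduction, and an explicit minimizer, which the paper leaves implicit.
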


\begin{proof}
For fixed $\boldsymbol{D}$, varying $\boldsymbol{v}$ allows the signs in
\[
[\boldsymbol{v}\oplus\boldsymbol{D}]-[\boldsymbol{v}]
=
\sum_{r:D_r=1}\varepsilon_r2^{-r},
\qquad \varepsilon_r\in\{-1,1\},
\]
to be chosen independently.  The leading term $2^{-\nu(\boldsymbol{D})}$
exceeds the sum of all lower binary weights, so the minimum absolute value is
obtained by taking the opposite sign for every lower nonzero digit.
\end{proof}

\begin{lemma}
\label{lem:small-digital-difference-count}
For every integer $K\ge1$,
\begin{enumerate}
\item[(i)]
for every $\nu\in\{1,\ldots,M\}$,
\[
\#\{\boldsymbol{D}\in\mathbb F_2^M:
 \nu(\boldsymbol{D})=\nu,\ \mu_M(\boldsymbol{D})\le K2^{-M}\}
=
\min\{K,2^{M-\nu}\}.
\]
\item[(ii)]
\[
\#\{\boldsymbol{D}\in\mathbb F_2^M\setminus\{\boldsymbol{0}\}:
\mu_M(\boldsymbol{D})\le K2^{-M}\}
\le MK.
\]
\end{enumerate}
\end{lemma}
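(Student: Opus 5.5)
The plan is to use the explicit formula \eqref{eq:exact-digital-distance} from Lemma~\ref{lem:exact-digital-distance} and count directly. Fix $\nu\in\{1,\ldots,M\}$ and consider the vectors $\boldsymbol{D}$ with $\nu(\boldsymbol{D})=\nu$. Such a vector is determined by its tail $(D_{\nu+1},\ldots,D_M)$, which ranges freely over $\mathbb F_2^{M-\nu}$. Write
\[
T=\sum_{\substack{r>\nu\\D_r=1}}2^{-r}.
\]
Multiplying by $2^M$ turns $T$ into an integer $s=2^MT$. As the tail varies, $s$ runs bijectively through $\{0,1,\ldots,2^{M-\nu}-1\}$, because the tail is just the binary expansion of $s$ with $M-\nu$ digits.

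In these terms, \eqref{eq:exact-digital-distance} reads
\[
2^M\mu_M(\boldsymbol{D})=2^{M-\nu}-s.
\]
The condition $\mu_M(\boldsymbol{D})\le K2^{-M}$ is therefore equivalent to $s\ge 2^{M-\nu}-K$. We need to count the integers $s$ with $0\le s\le 2^{M-\nu}-1$ and $s\ge 2^{M-\nu}-K$. If $K\le 2^{M-\nu}$ there are exactly $K$ of them; otherwise all $2^{M-\nu}$ values qualify. This gives $\min\{K,2^{M-\nu}\}$, which is part~(i).

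For part~(ii), every nonzero $\boldsymbol{D}$ has a well-defined $\nu(\boldsymbol{D})\in\{1,\ldots,M\}$. We partition the nonzero vectors according to $\nu$ and sum the counts from part~(i), bounding each summand by $K$. The total is at most $MK$.

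The only point needing care is the bijection between tails and integers $s\in[0,2^{M-\nu})$, including the edge case $\nu=M$. There the tail is empty, $s=0$, and $\mu_M=2^{-M}\le K2^{-M}$ for every $K\ge1$, so the count is $1=\min\{K,1\}$, as required. Beyond this the argument is routine, and I expect no real obstacle.
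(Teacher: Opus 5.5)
Your proposal is correct and follows essentially the same route as the paper: both use the formula of Lemma~\ref{lem:exact-digital-distance} to observe that, for fixed $\nu$, the quantity $2^M\mu_M(\boldsymbol{D})$ runs exactly once through $1,2,\ldots,2^{M-\nu}$ as the lower digits vary, which gives (i), and then sum over $\nu$ for (ii). Your explicit reindexing by $s=2^MT$ and the check of the edge case $\nu=M$ simply spell out what the paper states in one line.
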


\begin{proof}
By Lemma~\ref{lem:exact-digital-distance}, for fixed
$\nu(\boldsymbol{D})=\nu$, the value of $\mu_M(\boldsymbol{D})$ runs once through
$2^{-M},2\cdot2^{-M},\ldots,2^{M-\nu}2^{-M}$
as the lower digits vary.  This proves (i).  Summing over $\nu$ proves
(ii).\qedhere
\end{proof}

\subsection{Higher-dimensional scrambling}\label{subsec:higher-dimensional-matrix-linear}

The next lemma isolates the probabilistic estimate for one fixed nonzero
input-vector difference.

\begin{lemma}
\label{lem:linear-fixed-difference}
Let $C_1,\ldots,C_d\in\mathbb F_2^{\mathbb N\times m}$ generate the binary
digital point set
$P_{2^m}=(\boldsymbol{x}_{\boldsymbol{a}})_{\boldsymbol{a}\in\mathbb F_2^m}$,
and fix a digital shift $\boldsymbol{\delta}$.  Let
$(\boldsymbol{x}_{\boldsymbol{a}}^{\mathrm{mat},\boldsymbol{\delta}})_{
\boldsymbol{a}\in\mathbb F_2^m}$ be its matrix-scrambled version obtained using
independent random infinite nonsingular lower triangular matrices
$L_1,\ldots,L_d$.

For $\boldsymbol{h}\in\mathbb F_2^m\setminus\{\boldsymbol{0}\}$, set
\[
\ell_{j,m}(\boldsymbol{h})
:=
\min\{\ell(0,x_{\boldsymbol{h},j}),m-1\},
\qquad
\boldsymbol{\ell}_m(\boldsymbol{h})
:=
\bigl(\ell_{1,m}(\boldsymbol{h}),\ldots,
      \ell_{d,m}(\boldsymbol{h})\bigr).
\]
Then, for every integer $K\ge1$,
\[
\PP\left(
\min_{\boldsymbol{a}\in\mathbb F_2^m}
\left\lVert
\boldsymbol{x}_{\boldsymbol{a}\oplus\boldsymbol{h}}^{\mathrm{mat},\boldsymbol{\delta}}
-
\boldsymbol{x}_{\boldsymbol{a}}^{\mathrm{mat},\boldsymbol{\delta}}
\right\rVert_\infty
<K2^{-m}
\right)
\le
K^d2^{|\boldsymbol{\ell}_m(\boldsymbol{h})|+d-dm}.
\]
The estimate is uniform over the choice of the fixed shift.
\end{lemma}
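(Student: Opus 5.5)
The plan is to reduce the event to a condition on the random digit-difference vectors $L_jC_j\boldsymbol{h}$, which do not depend on $\boldsymbol{a}$ or on $\boldsymbol{\delta}$. We then use the exact count of Lemma~\ref{lem:small-digital-difference-count}(i) in each coordinate, and multiply using the independence of $L_1,\ldots,L_d$.

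\emph{Step 1 (reduction to a digit difference).} By linearity, the $j$-th scrambled digit vector of $\boldsymbol{x}_{\boldsymbol{a}\oplus\boldsymbol{h}}$ is $\boldsymbol{v}\oplus\boldsymbol{D}_j$. Here $\boldsymbol{v}=L_jC_j\boldsymbol{a}\oplus\boldsymbol{\delta}_j$ is the digit vector of $\boldsymbol{x}_{\boldsymbol{a}}$, and $\boldsymbol{D}_j:=L_jC_j\boldsymbol{h}$. The vector $\boldsymbol{D}_j$ is independent of $\boldsymbol{a}$ and of the shift, which already explains the uniformity in $\boldsymbol{\delta}$. Write $\boldsymbol{D}_j^{(m)}\in\mathbb F_2^m$ for its first $m$ digits. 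The contribution of digits beyond level $m$ to $[\boldsymbol{v}\oplus\boldsymbol{D}_j]-[\boldsymbol{v}]$ has absolute value at most $2^{-m}$. The contribution of the first $m$ digits is an integer multiple of $2^{-m}$ and has absolute value at least $\mu_m(\boldsymbol{D}_j^{(m)})$. Hence if the minimum over $\boldsymbol{a}$ is $<K2^{-m}$, then for every $j$ we have $\mu_m(\boldsymbol{D}_j^{(m)})<(K+1)2^{-m}$. By integrality this gives $\mu_m(\boldsymbol{D}_j^{(m)})\le K2^{-m}$, which includes the possibility $\boldsymbol{D}_j^{(m)}=\boldsymbol{0}$. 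So the probability in question is at most $\prod_j\PP(F_j)$, where $F_j=\{\mu_m(\boldsymbol{D}_j^{(m)})\le K2^{-m}\}$, since the $L_j$ are independent.

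\emph{Step 2 (law of $\boldsymbol{D}_j$).} Let $\ell=\ell(0,x_{\boldsymbol{h},j})$. If $\ell\ge m-1$ (including $\ell=\infty$), then $\ell_{j,m}(\boldsymbol{h})=m-1$ and the required factor $K2^{\ell_{j,m}+1-m}=K\ge1$ is trivial. Otherwise $\boldsymbol{y}=C_j\boldsymbol{h}$ has zero digits up to position $\ell$ and digit $1$ at position $\ell+1$. Lower triangularity of $L_j$ then gives the following.
\begin{itemize}
\item The first $\ell$ digits of $\boldsymbol{D}_j$ vanish.
\item Digit $\ell+1$ equals the diagonal entry, which is $1$ in $\mathbb F_2$.
\item Each digit $r>\ell+1$ contains the uniform subdiagonal entry $(L_j)_{r,\ell+1}$ added to terms involving other entries. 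These digits are therefore i.i.d.\ uniform.
\end{itemize}
Thus $\nu(\boldsymbol{D}_j^{(m)})=\ell+1$, and its $m-\ell-1$ lower digits are uniform.

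\emph{Step 3 (count and multiply).} By Lemma~\ref{lem:small-digital-difference-count}(i), with $\nu=\ell+1$,
\[
\PP(F_j)=\frac{\min\{K,2^{m-\nu}\}}{2^{m-\nu}}\le K2^{\ell+1-m}=K2^{\ell_{j,m}(\boldsymbol{h})+1-m}.
\]
Taking the product over $j$ yields $K^d2^{|\boldsymbol{\ell}_m(\boldsymbol{h})|+d-dm}$.

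The main point needing care is Step 1: handling the digits beyond level $m$ (output precision may exceed $m$) via the integrality argument, and dealing with the degenerate cases where $\boldsymbol{D}_j^{(m)}=\boldsymbol{0}$ or $\ell$ is large, which is exactly why the cap $m-1$ appears in $\ell_{j,m}(\boldsymbol{h})$.
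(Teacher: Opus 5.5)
Your proof is correct and follows essentially the same route as the paper. Both reduce the event to $\mu_m$ of the first $m$ digits of $L_jC_j\boldsymbol{h}$, apply Lemma~\ref{lem:small-digital-difference-count}(i) to the uniform law on vectors whose first nonzero digit sits at position $\ell_{j,m}(\boldsymbol{h})+1$, and multiply over the independent coordinates. The paper handles the digits beyond level $m$ by placing each scrambled coordinate in the closed dyadic interval $2^{-m}[q,q+1]$ instead of your head/tail split, but this is the same integrality argument. Your explicit justification of the uniform law, via the column-$(\ell+1)$ subdiagonal entries, fills in a step the paper only asserts.
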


\begin{proof}
Let $C_{j,m}$ denote the first $m$ rows of $C_j$, and put
$\boldsymbol{D}_j(\boldsymbol{h})
:=C_{j,m}\boldsymbol{h}\in\mathbb F_2^m$.
Let $L_{j,m}$ be the leading $m\times m$ submatrix of $L_j$, and let
$\boldsymbol{\delta}_j^{(m)}$ be the first $m$ shift digits.  The first $m$
digits of the $j$-th scrambled coordinate are
$\boldsymbol{v}_{\boldsymbol{a},j}
:=L_{j,m}C_{j,m}\boldsymbol{a}
\oplus\boldsymbol{\delta}_j^{(m)}$.  Hence
$\boldsymbol{v}_{\boldsymbol{a}\oplus\boldsymbol{h},j}
\oplus\boldsymbol{v}_{\boldsymbol{a},j}
=L_{j,m}\boldsymbol{D}_j(\boldsymbol{h})$.  Write
$q_{\boldsymbol{a},j}
:=\sum_{r=1}^m(\boldsymbol{v}_{\boldsymbol{a},j})_r2^{m-r}$.
Thus the $j$-th scrambled coordinate belongs to the closed dyadic interval
$2^{-m}[q_{\boldsymbol{a},j},q_{\boldsymbol{a},j}+1]$.

Suppose that, for some $\boldsymbol{a}\in\mathbb F_2^m$,
\[
\left\lVert
\boldsymbol{x}_{\boldsymbol{a}\oplus\boldsymbol{h}}^{\mathrm{mat},\boldsymbol{\delta}}
-
\boldsymbol{x}_{\boldsymbol{a}}^{\mathrm{mat},\boldsymbol{\delta}}
\right\rVert_\infty
<K2^{-m}.
\]
Then, for every coordinate $j$,
$|q_{\boldsymbol{a}\oplus\boldsymbol{h},j}
-q_{\boldsymbol{a},j}|\le K$.
Since
$\boldsymbol{v}_{\boldsymbol{a}\oplus\boldsymbol{h},j}
=\boldsymbol{v}_{\boldsymbol{a},j}
\oplus L_{j,m}\boldsymbol{D}_j(\boldsymbol{h})$,
the definition of $\mu_m$ gives
\[
\mu_m(L_{j,m}\boldsymbol{D}_j(\boldsymbol{h}))
\le
2^{-m}
\left|
q_{\boldsymbol{a}\oplus\boldsymbol{h},j}
-q_{\boldsymbol{a},j}
\right|
\le K2^{-m}.
\]
If $\boldsymbol{D}_j(\boldsymbol{h})\ne\boldsymbol{0}$, its first nonzero digit
is at position $r_0=\ell_{j,m}(\boldsymbol{h})+1$, and
$L_{j,m}\boldsymbol{D}_j(\boldsymbol{h})$ is uniform over the $2^{m-r_0}$ vectors
with first nonzero digit at $r_0$.  Lemma~\ref{lem:small-digital-difference-count}(i)
therefore gives
\[
\PP\left(
\mu_m(L_{j,m}\boldsymbol{D}_j(\boldsymbol{h}))\le K2^{-m}
\right)
=
\frac{\min\{K,2^{m-r_0}\}}{2^{m-r_0}}
\le K2^{\ell_{j,m}(\boldsymbol{h})+1-m}.
\]
If $\boldsymbol{D}_j(\boldsymbol{h})=\boldsymbol{0}$, the same bound is trivial
because its right-hand side is at least one.  Independence across the
coordinates gives the result.
\end{proof}

The theorem below uses only digitality and the coincidence bound~\eqref{eq:uniform-coincidence} for the
minimum-distance estimate.  The $(t,m,d)$-net property is needed only for the
covering radius.  The restriction $d\ge2$ is essential because the final
union bound has no decaying factor when $d=1$.

\begin{theorem}
\label{thm:linear-general-polylog}
Let $d\ge2$ and $t\ge0$ be fixed.  For each $m\ge\max\{t,1\}$, let
$P_{2^m}=(\boldsymbol{x}_{\boldsymbol{a}})_{\boldsymbol{a}\in\mathbb F_2^m}$
be a digital $(t,m,d)$-net in base $2$.  Assume that the family $(P_{2^m})$
satisfies the uniform coincidence bound with a constant $A$ independent of
$m$.  Put $N=2^m$.  There are constants
$c,C>0$, depending only on $d,t,A$, such that, for every
$m\ge\max\{t,1\}$ and every
$u\ge1$ satisfying $u\log N\le N^{1-1/d}$,
\[
\PP\left(
 R_\infty(P_{2^m}^{\mathrm{scr}})<
 c\,N^{-1/d}(u\log N)^{-1}
 \text{ or }
 \rho_\infty(P_{2^m}^{\mathrm{scr}})>C\,u\log N
\right)
\le C\,u^{-d}.
\]
In particular,
\[
R_\infty(P_{2^m}^{\mathrm{scr}})
=\Omega_{\PP}\!\left(N^{-1/d}(\log N)^{-1}\right),
\qquad
\rho_\infty(P_{2^m}^{\mathrm{scr}})=O_{\PP}(\log N).
\]
The bounds for matrix scrambling are uniform over the fixed shift
$\boldsymbol{\delta}$.
\end{theorem}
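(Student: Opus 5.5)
The covering part is immediate, so the real work is a union bound over input differences $\boldsymbol{h}$ using Lemma~\ref{lem:linear-fixed-difference}. Matrix and linear scrambling preserve the digital $(t,m,d)$-net property. Hence \eqref{eq:net-covering} gives deterministically $h_\infty(P_{2^m}^{\mathrm{scr}})\le 2^{(d-1+t)/d}N^{-1/d}$. The mesh-ratio event is therefore contained in the minimum-distance event after adjusting constants, and it suffices to bound $\PP(R_\infty<c N^{-1/d}(u\log N)^{-1})$. For the linear law, condition on the shift: the fixed-shift bound is uniform in $\boldsymbol\delta$, so it survives averaging.

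Since every pair of distinct points has the form $(\boldsymbol{a},\boldsymbol{a}\oplus\boldsymbol{h})$ with $\boldsymbol h\ne\boldsymbol 0$, the union bound gives
\[
\PP\bigl(R_\infty<K2^{-m}\bigr)\le\sum_{\boldsymbol h\ne\boldsymbol 0}K^d2^{|\boldsymbol\ell_m(\boldsymbol h)|+d-dm}.
\]
Choose $K=\lceil cN^{1-1/d}(u\log N)^{-1}\rceil$. The condition $u\log N\le N^{1-1/d}$ ensures that $K\asymp cN^{1-1/d}/(u\log N)$ for $c\le1$, and in particular $K\ge1$. The sum is then controlled by $\Sigma_m:=\sum_{\boldsymbol h\ne\boldsymbol 0}2^{|\boldsymbol\ell_m(\boldsymbol h)|}$. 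The target is $\Sigma_m\le C_{d,A}N\,m^{d}$, so that the bound becomes
\[
K^d2^{d-dm}C Nm^d\lesssim c^d N^{d-1}(u\log N)^{-d}N^{1-d}m^d\lesssim c^du^{-d}.
\]
This uses $m\asymp\log N$, and the factor $m^d$ cancels the $(\log N)^{-d}$ exactly.

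The main step is bounding $\Sigma_m$ via the coincidence kernel \eqref{eq:digital-coincidence-kernel}. For each $\boldsymbol k$ with all $k_j\le m-1$, the number of $\boldsymbol h\ne\boldsymbol 0$ with $\boldsymbol\ell_m(\boldsymbol h)\ge\boldsymbol k$ coordinatewise is at most $|V_m(\boldsymbol k)|-1\le A(N-1)2^{-|\boldsymbol k|}$, by \eqref{eq:uniform-coincidence}. The truncation at $m-1$ only helps here, since $\ell_{j,m}\ge k_j$ with $k_j\le m-1$ implies $\ell(0,x_{\boldsymbol h,j})\ge k_j$. Then write $2^{|\boldsymbol\ell|}=\sum_{\boldsymbol k\le\boldsymbol\ell}2^{|\boldsymbol k|}\prod_j w(k_j)$, with weights $w(0)=1$ and $w(k)=1/2$ for $k\ge1$; this is the telescoping identity $2^\ell=1+\sum_{k=1}^{\ell}2^{k-1}$ applied in each coordinate. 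Exchanging the sums gives
\[
\Sigma_m\le\sum_{\boldsymbol k\in\{0,\ldots,m-1\}^d}2^{|\boldsymbol k|}\cdot A N2^{-|\boldsymbol k|}=AN m^d.
\]
Here the truncation $\ell_{j,m}\le m-1$ in Lemma~\ref{lem:linear-fixed-difference} is exactly what keeps the number of levels finite.

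The only potentially delicate point is handling $\boldsymbol h$ with some $\boldsymbol D_j(\boldsymbol h)=0$. In that case $\ell_{j,m}=m-1$ and the lemma's bound for that coordinate is trivial, but this is already incorporated in the sum above, so no separate case is needed. Finally, the bound $Cu^{-d}$ for all $u\ge1$, with constants depending only on $d,t,A$, yields the stated $\Omega_{\PP}$ and $O_{\PP}$ conclusions by letting $u$ be a large fixed constant.
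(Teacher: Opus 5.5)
Your proposal is correct and follows the paper's proof essentially step for step. Both arguments take a union bound over nonzero input differences via Lemma~\ref{lem:linear-fixed-difference}, count differences through the coincidence kernel \eqref{eq:digital-coincidence-kernel}, choose $K\approx N^{1-1/d}/(u\log N)$, use the net covering bound, and average over the random shift. The only variation is your telescoping decomposition of $2^{|\boldsymbol\ell|}$, which is valid but unnecessary: the paper groups the sum by the exact value $\boldsymbol q=\boldsymbol\ell_m(\boldsymbol h)$ and uses the containment $H_m(\boldsymbol q)\subseteq V_m(\boldsymbol q)\setminus\{\boldsymbol 0\}$, so the weights $2^{|\boldsymbol q|}$ cancel the coincidence bound directly and give the same $A m^d N$ total.
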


\begin{proof}
For $\boldsymbol{h}\in\mathbb F_2^m\setminus\{\boldsymbol{0}\}$, let
$\boldsymbol{\ell}_m(\boldsymbol{h})$ be as in
Lemma~\ref{lem:linear-fixed-difference}.  Fix an integer $K\ge1$.  By that
lemma and a union bound over the nonzero input-vector differences,
\begin{equation}\label{eq:linear-small-distance-union}
\PP\left(
R_\infty(P_{2^m}^{\mathrm{mat},\boldsymbol{\delta}})<K2^{-m}
\right)
\le
\sum_{\boldsymbol{h}\in\mathbb F_2^m\setminus\{\boldsymbol{0}\}}
K^d2^{|\boldsymbol{\ell}_m(\boldsymbol{h})|+d-dm}.
\end{equation}

For $\boldsymbol{q}\in\{0,\ldots,m-1\}^d$, let
\[
H_m(\boldsymbol{q})
=
\{\boldsymbol{h}\in\mathbb F_2^m\setminus\{\boldsymbol{0}\}:
\ell_{j,m}(\boldsymbol{h})=q_j,\ j=1,\ldots,d\}.
\]
Identify each integer in $V_m(\boldsymbol{q})$ with its length-$m$
binary digit vector.  Then
$H_m(\boldsymbol{q})\subseteq
V_m(\boldsymbol{q})\setminus\{\boldsymbol{0}\}$, and
\eqref{eq:digital-coincidence-kernel} together with the coincidence bound~\eqref{eq:uniform-coincidence}
gives
\[
|H_m(\boldsymbol{q})|
\le |V_m(\boldsymbol{q})|-1
\le A(N-1)2^{-|\boldsymbol{q}|}.
\]
The sets $H_m(\boldsymbol{q})$ partition
$\mathbb F_2^m\setminus\{\boldsymbol{0}\}$, and the summand in
\eqref{eq:linear-small-distance-union} depends on $\boldsymbol{h}$ only through
$\boldsymbol{\ell}_m(\boldsymbol{h})$.  Hence, grouping the sum by
$\boldsymbol{q}=\boldsymbol{\ell}_m(\boldsymbol{h})$ gives
\[
\begin{aligned}
\PP\left(
R_\infty(P_{2^m}^{\mathrm{mat},\boldsymbol{\delta}})<K2^{-m}
\right)
&\le
\sum_{\boldsymbol{q}\in\{0,\ldots,m-1\}^d}
|H_m(\boldsymbol{q})|
K^d2^{|\boldsymbol{q}|+d-dm}\\
&\le
\sum_{\boldsymbol{q}\in\{0,\ldots,m-1\}^d}
A(N-1)2^{-|\boldsymbol{q}|}
K^d2^{|\boldsymbol{q}|+d-dm}\\
&=
A m^d(N-1)K^d2^{d-dm}\\
&\le
A2^d m^dK^d2^{-m(d-1)}.
\end{aligned}
\]
In the last two lines, the factors $2^{-|\boldsymbol{q}|}$ and
$2^{|\boldsymbol{q}|}$ cancel, there are $m^d$ possible vectors $\boldsymbol{q}$, and $N-1<2^m$.

With $N=2^m$, take
$K=\lfloor N^{1-1/d}/(u\log N)\rfloor$.
The assumed range of $u$ gives $K\ge1$ and
$K\ge N^{1-1/d}/(2u\log N)$.
Substituting this choice of $K$ into the preceding estimate, and using
$m=(\log N)/\log 2$, gives
\[
\begin{aligned}
\PP\left(
R_\infty(P_{2^m}^{\mathrm{mat},\boldsymbol{\delta}})
<\frac12\,N^{-1/d}(u\log N)^{-1}
\right)
&\le
\PP\left(
R_\infty(P_{2^m}^{\mathrm{mat},\boldsymbol{\delta}})<K2^{-m}
\right)\\
&\le C_{d,A}u^{-d}.
\end{aligned}
\]
By the $(t,m,d)$-net property and the endpoint convention above,
\eqref{eq:net-covering} gives
$h_\infty(P_{2^m}^{\mathrm{mat},\boldsymbol{\delta}})
\le C_{d,t}N^{-1/d}$.
Consequently,
\[
\left\{
\rho_\infty(P_{2^m}^{\mathrm{mat},\boldsymbol{\delta}})
>4C_{d,t}\,u\log N
\right\}
\subseteq
\left\{
R_\infty(P_{2^m}^{\mathrm{mat},\boldsymbol{\delta}})
<\frac12\,N^{-1/d}(u\log N)^{-1}
\right\},
\]
which proves the stated probability bound.  Taking $u=B$ and then choosing
$B$ sufficiently large gives the two asserted probabilistic orders, uniformly
over fixed digital shifts.  Averaging every fixed-shift estimate over the
random digital shift proves the corresponding assertions for
$P_{2^m}^{\mathrm{lin}}$.
\end{proof}

\begin{corollary}
\label{cor:linear-sequence-polylog}
Let $d\ge2$, and let $\mathcal S=(\boldsymbol{x}_n)_{n\ge0}$ be a digital
$(t,d)$-sequence in base $2$.  Assume that the point sets consisting of the
first $2^m$ points satisfy the uniform coincidence bound with a constant $A$
independent of $m$.  Then, for every $\varepsilon>0$, almost surely,
\[
\rho_\infty(P_n^{\mathrm{scr}})
\le
C_{\varepsilon,d,t,A}
(\log n)^{1+1/d}
\bigl(\log(2+\log n)\bigr)^{(1+\varepsilon)/d}
\]
for all sufficiently large $n$.  In the case of matrix scrambling, this
conclusion holds simultaneously for all fixed infinite digital shifts
$\boldsymbol{\delta}$.
\end{corollary}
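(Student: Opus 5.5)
The plan is to follow the pattern of Corollaries~\ref{cor:owen-sequence-nonqu} and~\ref{cor:owen-badic-as}. We first work along the subsequence $n=2^m$, apply Theorem~\ref{thm:linear-general-polylog} with a slowly growing $u=u_m$, and use Borel--Cantelli. We then interpolate to all $n$ through Lemma~\ref{lem:initial-segment-comparison}. The first $2^m$ points of $\mathcal S$ are a digital $(t,m,d)$-net in base $2$, generated by the first $m$ columns of the generating matrices. Since the same scrambling matrices (and shifts) are used at every level, $P_{2^m}^{\mathrm{scr}}$ is exactly the matrix- or linear-scrambled version of that net. Hence Theorem~\ref{thm:linear-general-polylog} applies for each $m$ with constants depending only on $d,t,A$.

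Choose
\[
u_m=\bigl(m(\log(2+m))^{1+\varepsilon}\bigr)^{1/d}.
\]
Then $\sum_m u_m^{-d}<\infty$, and $u_m\log N\le N^{1-1/d}$ holds for large $m$ because $d\ge2$. Theorem~\ref{thm:linear-general-polylog} gives exceptional probabilities at most $Cu_m^{-d}$, which are summable. So almost surely, for all large $m$,
\[
R_\infty(P_{2^m}^{\mathrm{scr}})\ge c\,2^{-m/d}(u_m m\log2)^{-1}.
\]
For the interpolation, take $2^m\le n<2^{m+1}$. Lemma~\ref{lem:initial-segment-comparison} gives
\[
\rho_\infty(P_n^{\mathrm{scr}})\le 2h_\infty(P_{2^m}^{\mathrm{scr}})/R_\infty(P_{2^{m+1}}^{\mathrm{scr}}).
\]
By \eqref{eq:net-covering}, which holds deterministically because scrambling preserves the net property, $h_\infty(P_{2^m}^{\mathrm{scr}})\le C_{d,t}2^{-m/d}$. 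Combining this with the lower bound on $R_\infty(P_{2^{m+1}}^{\mathrm{scr}})$ bounds the ratio by $C\,m\,u_{m+1}$. This is $\asymp(\log n)^{1+1/d}(\log(2+\log n))^{(1+\varepsilon)/d}$, the claimed rate.

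The delicate point is the claim of simultaneity over all fixed infinite shifts $\boldsymbol\delta$ for matrix scrambling. Borel--Cantelli gives a null set that depends on $\boldsymbol\delta$, and there are uncountably many shifts. A union over shifts would therefore fail. I would instead bypass the shift by going back into the proof of Theorem~\ref{thm:linear-general-polylog}. The event bounded there, via Lemma~\ref{lem:linear-fixed-difference}, is contained in the event that for some nonzero $\boldsymbol h$ and every $j$ one has $\mu_m(L_{j,m}\boldsymbol D_j(\boldsymbol h))\le K2^{-m}$. That event depends only on the matrices $L_j$, not on $\boldsymbol\delta$. Its probability obeys the same bound $A2^dm^dK^d2^{-m(d-1)}$.

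Concretely, define $G_m$ as the matrix-only event that $\mu_m(L_{j,m}\boldsymbol D_j(\boldsymbol h))>K_m2^{-m}$ for some $j$, for every nonzero $\boldsymbol h$. On $G_m$, for every shift, every pair of scrambled points is at distance at least $K_m2^{-m}$. A scrambled difference, with the shift cancelling, has truncated-digit distance at least $\mu_m$. We then need to account for the boundary slack of one cell, so $K$ is replaced by $K-1$ or the width constants are adjusted, as in the lemma. Borel--Cantelli applied to the complements of $G_m$ yields a single null set outside of which the bound holds for all $\boldsymbol\delta$ at once. The covering bound is deterministic for every shift.

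The linear case follows either by the same argument or by averaging. The main obstacle I expect is this verification that the close-pair event is shift-free with the correct constants. That check is routine once the containment is spelled out.
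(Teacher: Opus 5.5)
Your proposal is correct and follows the paper's proof essentially step for step: the same $u_m=m^{1/d}(\log(2+m))^{(1+\varepsilon)/d}$, Borel--Cantelli along $n=2^m$, and interpolation via Lemma~\ref{lem:initial-segment-comparison} and the net covering bound. For simultaneity over all $\boldsymbol{\delta}$, the paper uses the same observation you do: the bad event in the proof of Lemma~\ref{lem:linear-fixed-difference} is a shift-independent event that depends only on the matrices. Your hedge about replacing $K$ by $K-1$ is unnecessary, because the lemma's containment already shows that a distance below $K2^{-m}$ forces $\mu_m(L_{j,m}\boldsymbol{D}_j(\boldsymbol{h}))\le K2^{-m}$ for every $j$, so on your event $G_m$ every pair is at distance at least $K_m2^{-m}$ with no adjustment.
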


\begin{proof}
First let $n=2^m$.  For all sufficiently large $m$, apply
Theorem~\ref{thm:linear-general-polylog} with
\[
u_m=m^{1/d}(\log(2+m))^{(1+\varepsilon)/d}.
\]
The sum of the exceptional probabilities is bounded by
\[
C\sum_{m\ge m_0}u_m^{-d}
=C\sum_{m\ge m_0}\frac{1}{m(\log(2+m))^{1+\varepsilon}}<\infty.
\]
In the fixed-shift proof, the bad event is contained
in the same shift-independent event for every digital shift.  The first
Borel--Cantelli lemma therefore gives, simultaneously for all shifts,
\[
R_\infty(P_{2^m}^{\mathrm{scr}})
\ge
c_{\varepsilon,d,t,A}
2^{-m/d}m^{-1-1/d}
(\log(2+m))^{-(1+\varepsilon)/d}
\]
for all sufficiently large $m$.  The same conclusion follows under the joint
law for linear scrambling.

The asserted bound for all sufficiently large $n$ now follows from
Lemma~\ref{lem:initial-segment-comparison} and the net covering bound.
\end{proof}

The preceding minimum-distance argument uses only the coincidence bound, but
it is one-sided and becomes ineffective when $d=1$.  Motivated by recent uses
of random digital-net models in integration~\cite{Pan26,GLT26}, we next
introduce an affine-tail model adapted to sharp minimum-distance analysis.
Its application to
one-dimensional matrix and linear scrambling is given in
Subsection~\ref{subsec:one-dimensional-matrix-linear}.

\subsection{Balanced-prefix affine tails}
\label{subsec:balanced-affine-tails}

\begin{definition}[Balanced-prefix random affine-tail model]
\label{def:balanced-prefix-random-affine}
Fix $d,k\ge1$ and set $m=kd$.  Let
$B_1,\ldots,B_d$ be arbitrary deterministic matrices with
$B_j\in\mathbb F_2^{k\times m}$ such that
\[
B:=
\begin{pmatrix}
B_1\\
\vdots\\
B_d
\end{pmatrix}
\in\mathbb F_2^{m\times m}
\]
is invertible.  Let
$G_1,\ldots,G_d\in\mathbb F_2^{\mathbb N\times m}$ be independent random
matrices with independent uniform entries, and set
\[
C_j=
\begin{pmatrix}
B_j\\
G_j
\end{pmatrix}
\in\mathbb F_2^{\mathbb N\times m}.
\]
For each $j=1,\ldots,d$, fix an arbitrary deterministic digital shift
\[
\boldsymbol{\delta}_j=
\begin{pmatrix}
\boldsymbol{\eta}_j\\
\boldsymbol{c}_j
\end{pmatrix}
\in\mathbb F_2^{\mathbb N},
\qquad
\boldsymbol{\eta}_j\in\mathbb F_2^k,
\quad
\boldsymbol{c}_j=(c_{j,r})_{r\ge1}\in\mathbb F_2^{\mathbb N}.
\]
For $\boldsymbol{v}\in\mathbb F_2^m$, define
$x_{\boldsymbol{v},j}
=\bigl[C_j\boldsymbol{v}\oplus\boldsymbol{\delta}_j\bigr]$ for
$j=1,\ldots,d$, and set
$P_{2^m}=(\boldsymbol{x}_{\boldsymbol{v}})_{
\boldsymbol{v}\in\mathbb F_2^m}$.
We say that $P_{2^m}$ follows the balanced-prefix random affine-tail model.
The only randomness in the model comes from the matrices
$G_1,\ldots,G_d$.  The matrices $B_1,\ldots,B_d$ and the shifts
$\boldsymbol{\delta}_1,\ldots,\boldsymbol{\delta}_d$ are fixed.
\end{definition}

The following indexed form will be convenient.

\begin{lemma}
\label{lem:balanced-prefix-indexed-form}
Fix $d,k\ge1$, set $m=kd$, and fix
\[
\boldsymbol{\eta}
=(\boldsymbol{\eta}_1^\top,\ldots,\boldsymbol{\eta}_d^\top)^\top
\in(\mathbb F_2^k)^d
\qquad\text{and}\qquad
\boldsymbol{c}_j\in\mathbb F_2^{\mathbb N},
\quad j=1,\ldots,d.
\]
Let $A_1,\ldots,A_d\in\mathbb F_2^{\mathbb N\times m}$ be independent random
matrices with independent uniform entries.  For
$\boldsymbol{a}
=(\boldsymbol{a}_1^\top,\ldots,\boldsymbol{a}_d^\top)^\top
\in(\mathbb F_2^k)^d$,
define $\boldsymbol{x}_{\boldsymbol a}$ by taking its output digit vector in
coordinate $j$ to be
\[
\begin{pmatrix}
\boldsymbol{a}_j\\
A_j(\boldsymbol{a}\oplus\boldsymbol{\eta})\oplus\boldsymbol{c}_j
\end{pmatrix},
\qquad j=1,\ldots,d.
\]
Then $P_{2^m}=(\boldsymbol{x}_{\boldsymbol a})_{
\boldsymbol a\in(\mathbb F_2^k)^d}$ follows the balanced-prefix random
affine-tail model of Definition~\ref{def:balanced-prefix-random-affine}.
\end{lemma}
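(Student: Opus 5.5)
The plan is to exhibit explicit data $B_1,\ldots,B_d$, $G_1,\ldots,G_d$ and $\boldsymbol{\delta}_1,\ldots,\boldsymbol{\delta}_d$ for Definition~\ref{def:balanced-prefix-random-affine} that produce the indexed family after an invertible relabeling of the input vectors. Since the point set is a multiset indexed by $\mathbb F_2^m$, any bijection of the index set gives the same point set (in law). I will use the affine substitution $\boldsymbol{v}=\boldsymbol{a}\oplus\boldsymbol{\eta}$. This map is a bijection of $\mathbb F_2^m=(\mathbb F_2^k)^d$.

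Take $B_j\in\mathbb F_2^{k\times m}$ to be the coordinate projection $\boldsymbol{v}\mapsto\boldsymbol{v}_j$, that is, the $j$-th block row of the $m\times m$ identity. Then $B$ is the identity, hence invertible. Take $G_j:=A_j$, which has independent uniform entries, independently over $j$. Take the shift $\boldsymbol{\delta}_j=(\boldsymbol{\eta}_j^\top,\boldsymbol{c}_j^\top)^\top$, which is deterministic as the definition requires.

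With these choices, the point indexed by $\boldsymbol{v}=\boldsymbol{a}\oplus\boldsymbol{\eta}$ has output digit vector in coordinate $j$ equal to
\[
C_j\boldsymbol{v}\oplus\boldsymbol{\delta}_j=
\begin{pmatrix}
\boldsymbol{v}_j\oplus\boldsymbol{\eta}_j\\
A_j\boldsymbol{v}\oplus\boldsymbol{c}_j
\end{pmatrix}
=
\begin{pmatrix}
\boldsymbol{a}_j\\
A_j(\boldsymbol{a}\oplus\boldsymbol{\eta})\oplus\boldsymbol{c}_j
\end{pmatrix}.
\]
Here I use $\boldsymbol{\eta}_j\oplus\boldsymbol{\eta}_j=\boldsymbol{0}$ in characteristic $2$. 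This is exactly the digit vector of $\boldsymbol{x}_{\boldsymbol a}$ in the lemma. So the indexed family is the model point set reindexed by the bijection $\boldsymbol{a}\mapsto\boldsymbol{a}\oplus\boldsymbol{\eta}$. The identity holds realization by realization, since the random matrices are literally the same and only the index set is relabeled. In particular, every quantity defined on the multiset, such as $R_\infty$ and $h_\infty$, agrees.

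There is no real obstacle. The only point needing care is to be precise that "follows the model" is understood up to relabeling of indices, as for multisets throughout the paper. The alternative, which avoids relabeling altogether, is to keep the index $\boldsymbol{a}$ and take shift $\boldsymbol{0}$ in the prefix block while absorbing $\boldsymbol{\eta}$ into the tail shift as $\boldsymbol{c}_j\oplus A_j\boldsymbol{\eta}$. That shift, however, would be random rather than deterministic, so the affine reindexing above is the correct choice.
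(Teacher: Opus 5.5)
Your argument is correct. It is the paper's change of variables $\boldsymbol a=B\boldsymbol v\oplus\boldsymbol\eta$ specialized to $B=I$ and $G_j=A_j$, with shift $\boldsymbol\delta_j=(\boldsymbol\eta_j^\top,\boldsymbol c_j^\top)^\top$.

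The paper instead keeps an arbitrary invertible $B$ and sets $A_j:=G_jB^{-1}$, which is again uniform. The same computation then also gives the converse: every model instance, whatever its prefix matrix $B$, is in law a reindexed copy of the indexed form. That converse is what the proof of Theorem~\ref{thm:balanced-affine-tail-sharp} relies on, and it is needed in Corollary~\ref{cor:one-dimensional-matrix-linear}, where $B=L_mC_m$.
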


\begin{proof}
Indeed, the change of variables
$\boldsymbol a=B\boldsymbol v\oplus\boldsymbol\eta$ reindexes the points in
Definition~\ref{def:balanced-prefix-random-affine}, and the matrices
$A_j:=G_jB^{-1}$ remain independent with independent uniform entries.
\end{proof}

For the indexed representation in Lemma~\ref{lem:balanced-prefix-indexed-form}
and $\ell\ge1$, let $A_{j,\ell}$ be the first $\ell$ rows of $A_j$ and put
\[
\boldsymbol{c}_{j,\ell}:=(c_{j,1},\ldots,c_{j,\ell}),
\qquad
T_{j,\ell}(\boldsymbol{a})
:=A_{j,\ell}(\boldsymbol{a}\oplus\boldsymbol{\eta})
  \oplus\boldsymbol{c}_{j,\ell}.
\]
Then $A_{j,\ell}$ is a uniform random $\ell\times m$ matrix, independently
across $j$.

\begin{lemma}
\label{lem:uniform-random-linear-map}
Let $V$ be a finite-dimensional vector space over $\mathbb F_2$, and let
$A:V\to\mathbb F_2^q$ be chosen uniformly from
$\operatorname{Hom}(V,\mathbb F_2^q)$.
\begin{romanenumerate}
\item If $\boldsymbol{v}_1,\ldots,\boldsymbol{v}_r\in V$ are linearly
independent, then
$A\boldsymbol{v}_1,\ldots,A\boldsymbol{v}_r$ are independent and uniform on
$\mathbb F_2^q$.
\item Let $\mathcal U\subset V$ be an affine subspace of dimension $s\ge q$,
let $\boldsymbol{c}\in\mathbb F_2^q$, and put
$T(\boldsymbol{a})=A\boldsymbol{a}\oplus\boldsymbol{c}$.  Then, for every
$\boldsymbol{y}\in\mathbb F_2^q$,
\[
\PP\{\boldsymbol{y}\notin T(\mathcal U)\}
\le 2^{q-s}.
\]
\end{romanenumerate}
\end{lemma}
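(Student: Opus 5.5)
Part (i) is a direct counting argument.  Extend $\boldsymbol{v}_1,\ldots,\boldsymbol{v}_r$ to a basis $\boldsymbol{v}_1,\ldots,\boldsymbol{v}_n$ of $V$.  A homomorphism $A$ is then determined by the images $A\boldsymbol{v}_1,\ldots,A\boldsymbol{v}_n$, and these can be chosen freely in $\mathbb F_2^q$.  The map
\[
A\mapsto (A\boldsymbol{v}_1,\ldots,A\boldsymbol{v}_n)
\]
is therefore a bijection $\operatorname{Hom}(V,\mathbb F_2^q)\to(\mathbb F_2^q)^n$.  It carries the uniform law to the uniform product law.  Projecting onto the first $r$ components shows that $A\boldsymbol{v}_1,\ldots,A\boldsymbol{v}_r$ are independent and uniform.

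For part (ii), I would use a second-moment (Chebyshev) argument on the number of preimages.  Write $\mathcal U=\boldsymbol{u}_0+W$, where $W$ is a linear subspace of dimension $s$.  Fix $\boldsymbol{y}$ and put
\[
Z=\#\{\boldsymbol{a}\in\mathcal U: T(\boldsymbol{a})=\boldsymbol{y}\}
=\#\{\boldsymbol{w}\in W: A\boldsymbol{w}=\boldsymbol{y}'\},
\qquad \boldsymbol{y}':=\boldsymbol{y}\oplus\boldsymbol{c}\oplus A\boldsymbol{u}_0 .
\]
The shift $\boldsymbol{y}'$ is random, so I would proceed pointwise.  For each $\boldsymbol{w}\in W$, consider the event $T(\boldsymbol{u}_0+\boldsymbol{w})=\boldsymbol{y}$, that is, $A(\boldsymbol{u}_0+\boldsymbol{w})=\boldsymbol{y}\oplus\boldsymbol{c}$.

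The points $\boldsymbol{u}_0+\boldsymbol{w}$ form $2^s$ vectors, and they are distinct.  Two distinct such vectors are linearly independent unless one of them is zero.  Over $\mathbb F_2$, two distinct nonzero vectors are always linearly independent.  Hence, by (i), the images of any two distinct nonzero points are independent and uniform, and the image of each nonzero point is uniform.

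If $\boldsymbol{0}\in\mathcal U$, the set is a subspace, and I handle it directly.  Take $Z=\#\{\boldsymbol{w}\in W:A\boldsymbol{w}=\boldsymbol{y}\oplus\boldsymbol{c}\}$.  By (i), the nonzero points contribute indicator events that are pairwise independent, each with probability $2^{-q}$.  The zero point contributes a deterministic indicator, and a deterministic term does not increase the variance.  Otherwise $\boldsymbol{0}\notin\mathcal U$, all points are nonzero, and the indicators are pairwise independent with mean $2^{-q}$.

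Let $Z'$ denote the sum over the nonzero points.  It has
\[
\EE[Z']\ge(2^s-1)2^{-q},
\qquad
\operatorname{Var}(Z')\le \EE[Z'] .
\]

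Suppose first that $\boldsymbol{0}\notin\mathcal U$.  Lemma~\ref{lem:probabilistic-tools}(ii) then gives
\[
\PP(Z=0)\le 1/\EE[Z]=2^{q-s},
\]
which is the claim.

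Suppose instead that $\mathcal U=W$ is linear.  If $\boldsymbol{y}=\boldsymbol{c}$, the zero vector is always a preimage, so the probability is $0$.  Otherwise the zero point never hits, and $Z=Z'$ with $\EE[Z]=(2^s-1)2^{-q}$.  Chebyshev then gives the bound $2^q/(2^s-1)$.  This is slightly weaker than the claimed $2^{q-s}$, and closing this gap is the main obstacle.

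To close it, I would use the exact structure of the linear case.  Here $A(W)$ is a uniformly random linear image.  The target $\boldsymbol{y}\oplus\boldsymbol{c}$ is a fixed nonzero vector $\boldsymbol{z}$, and the question is whether $\boldsymbol{z}\in A(W)$.  By symmetry under $\mathrm{GL}_q$, which acts transitively on nonzero vectors and preserves the law of $A$,
\[
\PP(\boldsymbol{z}\notin A(W))=1-\frac{\EE|A(W)|-1}{2^q-1}.
\]
Next I would bound $\EE|A(W)|$ from below.  The restriction $A|_W$ is uniform in $\operatorname{Hom}(W,\mathbb F_2^q)$.  Its image misses a given nonzero functional-kernel hyperplane with probability $2^{-s}$.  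Summing the codimension-one failures over the $2^q-1$ hyperplanes containing no... (more directly) counting the expected number of hyperplanes $H$ with $A(W)\subseteq H$ gives at most $(2^q-1)2^{-s}$.  This yields $\PP(A(W)\ne\mathbb F_2^q)\le 2^{q-s}$, which implies the claim since $\boldsymbol{z}\notin A(W)$ forces $A(W)\neq\mathbb F_2^q$.

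The hyperplane count also disposes of the general affine case at once.  Note that $T(\mathcal U)=A\boldsymbol{u}_0\oplus\boldsymbol{c}\oplus A(W)$ is a coset of $A(W)$.  If $A(W)=\mathbb F_2^q$, then $T(\mathcal U)=\mathbb F_2^q$ and $\boldsymbol{y}$ is hit.  Hence
\[
\PP\{\boldsymbol{y}\notin T(\mathcal U)\}\le\PP(A(W)\ne\mathbb F_2^q)\le\sum_{H}\PP(A(W)\subseteq H)=(2^q-1)2^{-s}\le 2^{q-s}.
\]
Here $H$ ranges over the $2^q-1$ hyperplanes of $\mathbb F_2^q$.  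Each hyperplane is $\ker\phi$ for a nonzero functional $\phi$, and $\phi\circ A|_W$ is uniform on $W^*$.  It is therefore zero with probability $2^{-s}$.

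This makes the Chebyshev route unnecessary, and I would present only the hyperplane union bound.
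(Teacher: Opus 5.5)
Your final argument is correct and is essentially the paper's proof: part (i) by extending to a basis and assigning independent uniform images, and part (ii) by observing that surjectivity of $A|_W$ forces $T(\mathcal U)=\mathbb F_2^q$, then bounding non-surjectivity by a union bound over the $2^q-1$ nonzero functionals (the paper's events $\boldsymbol{z}^\top B=\boldsymbol{0}$), each of probability $2^{-s}$. The exploratory Chebyshev and $\mathrm{GL}_q$-symmetry detours are not needed and can be dropped from the write-up.
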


\begin{proof}
Extend $\boldsymbol{v}_1,\ldots,\boldsymbol{v}_r$ to a basis of $V$.  A
uniform random linear map is obtained by assigning independent uniform images
to the basis vectors, which proves the first assertion.

Write $\mathcal U=\boldsymbol{a}_0\oplus W$, where $\dim W=s$.  If $A|_W$ is
surjective, then $T(\mathcal U)=\mathbb F_2^q$.  In bases of $W$ and
$\mathbb F_2^q$, the map $A|_W$ is a uniform random $q\times s$ matrix $B$.
Therefore
\[
\PP\{\operatorname{rank}B<q\}
\le
\sum_{\boldsymbol{z}\in\mathbb F_2^q\setminus\{\boldsymbol{0}\}}
\PP\{\boldsymbol{z}^\top B=\boldsymbol{0}\}
=(2^q-1)2^{-s}<2^{q-s},
\]
which proves the second assertion.
\end{proof}

For $0\le n<2^k$, let
\[
\operatorname{bin}_k(n)=(a_1,\ldots,a_k)\in\mathbb F_2^k,
\qquad
n=\sum_{q=1}^ka_q2^{k-q},
\]
be the usual $k$-digit binary expansion of $n$.  This vector indexes the $n$-th dyadic interval.

\begin{lemma}
\label{lem:consecutive-binary-differences}
For $0\le n<2^k-1$, one has
\[
\operatorname{bin}_k(n)\oplus\operatorname{bin}_k(n+1)
=
\boldsymbol{D}_\tau
:=(0,\ldots,0,\underbrace{1,\ldots,1}_{\tau+1})
\]
for a unique $\tau\in\{0,\ldots,k-1\}$, namely the number of trailing ones in
$\operatorname{bin}_k(n)$.  Hence the vectors
$\operatorname{bin}_k(n)\oplus\operatorname{bin}_k(n+1)$ take at most $k$
possible values.
\end{lemma}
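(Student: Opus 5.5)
The plan is to argue directly from the binary expansion using the standard carry mechanism of adding one. Fix $0\le n<2^k-1$ and write $\operatorname{bin}_k(n)=(a_1,\ldots,a_k)$, where $a_k$ is the least significant digit, since $n=\sum_{q=1}^k a_q2^{k-q}$. Let $\tau$ be the number of trailing ones, that is, the largest $\tau\ge0$ with $a_{k-\tau+1}=\cdots=a_k=1$. Because $n<2^k-1$, not all digits equal one, so $\tau\le k-1$ and the digit $a_{k-\tau}$ exists and equals $0$.

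The key step is to identify $\operatorname{bin}_k(n+1)$. The trailing block contributes $\sum_{i=0}^{\tau-1}2^i=2^\tau-1$, so adding one turns this block into $2^\tau$. Concretely, the candidate vector is obtained from $\operatorname{bin}_k(n)$ by setting the last $\tau$ digits to $0$, setting $a_{k-\tau}$ from $0$ to $1$, and leaving $a_1,\ldots,a_{k-\tau-1}$ unchanged. Its value is $n-(2^\tau-1)+2^\tau=n+1<2^k$. Uniqueness of the $k$-digit expansion then shows that this vector is $\operatorname{bin}_k(n+1)$.

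The two vectors agree in the first $k-\tau-1$ positions and differ in each of the last $\tau+1$ positions. Hence their digitwise sum over $\mathbb F_2$ is exactly $\boldsymbol D_\tau$, with $\tau+1\in\{1,\ldots,k\}$ ones at the end. The value of $\tau$ is forced: the vectors $\boldsymbol D_0,\ldots,\boldsymbol D_{k-1}$ are pairwise distinct, since they have different numbers of ones. So $\tau$ is unique and equals the number of trailing ones of $\operatorname{bin}_k(n)$. The final claim, that the differences take at most $k$ values, follows because $\tau$ ranges over $\{0,\ldots,k-1\}$.

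There is no real obstacle. The only point needing care is the indexing convention: the least significant digit is $a_k$, not $a_1$, so the ones in $\boldsymbol D_\tau$ sit at the end of the vector. The hypothesis $n<2^k-1$ is used only to guarantee the existence of the zero digit $a_{k-\tau}$, which in turn means no overflow occurs and $\tau\le k-1$.
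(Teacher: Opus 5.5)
Your proof is correct: the carry argument identifies $\operatorname{bin}_k(n+1)$ correctly, and distinctness of the $\boldsymbol D_\tau$ gives the uniqueness of $\tau$. The paper states this lemma without proof, treating it as the standard carry fact, and your argument is exactly the verification it leaves implicit.
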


The key point is that the vectors
$\operatorname{bin}_k(n)\oplus\operatorname{bin}_k(n+1)$ take at most $k$
values, although there are $2^k-1$ adjacent pairs of level-$k$ dyadic
intervals.  The tail difference of a pair depends only on this vector.  Hence
all pairs with difference $\boldsymbol{D}_\tau$ are governed by the same
random vectors $A_{j,\ell}\boldsymbol{D}_\tau$.  This rigidity distinguishes the model from
independent jitter and is used in the proof below.

\begin{theorem}
\label{thm:balanced-affine-tail-sharp}
Let $d\ge1$ be fixed, let $m=kd$, and let $P_{2^m}$ follow the
balanced-prefix random affine-tail model of
Definition~\ref{def:balanced-prefix-random-affine}.  There are constants
$C_d>0$ and $k_0=k_0(d)$ such that, for every $k\ge k_0$ and every
$1\le u\le k/4$,
\begin{align}
\PP\left(
 R_\infty(P_{2^m})<\frac{1}{8u}\,\frac{2^{-k}}{k}
\right)&\le C_du^{-d},
\label{eq:balanced-affine-tail-lower}\\
\PP\left(
 R_\infty(P_{2^m})>8u\,\frac{2^{-k}}{k}
\right)&\le C_du^{-d}.
\label{eq:balanced-affine-tail-upper}
\end{align}
The bounds are uniform over all fixed matrices $B_1,\ldots,B_d$ satisfying
the invertibility condition in
Definition~\ref{def:balanced-prefix-random-affine} and all digital shifts.
In particular,
\[
R_\infty(P_{2^m})
=\Theta_{\PP}\!\left(\frac{2^{-k}}{k}\right)
=\Theta_{\PP}\!\left(2^{-m/d}m^{-1}\right),
\qquad
h_\infty(P_{2^m})\asymp_d2^{-k}=2^{-m/d},
\]
and
\[
\rho_\infty(P_{2^m})=\Theta_{\PP}(k)=\Theta_{\PP}(m).
\]
\end{theorem}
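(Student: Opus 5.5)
We work throughout in the indexed form of Lemma~\ref{lem:balanced-prefix-indexed-form}. Coordinate $j$ of $\boldsymbol{x}_{\boldsymbol a}$ has the $k$-digit prefix $\boldsymbol a_j$ followed by the random tail $T_j(\boldsymbol a)=A_j(\boldsymbol a\oplus\boldsymbol\eta)\oplus\boldsymbol c_j$. Since $\boldsymbol a\mapsto(\boldsymbol a_1,\ldots,\boldsymbol a_d)$ is a bijection onto the level-$(k,\ldots,k)$ cube labels, every isotropic dyadic cube of side $2^{-k}$ contains exactly one point. This gives $h_\infty(P_{2^m})\le 2^{-k}$, and together with \eqref{eq:universal-geometric-bounds} it gives $h_\infty(P_{2^m})\asymp_d 2^{-k}$ deterministically. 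The mesh-ratio statement then follows from the two tail bounds \eqref{eq:balanced-affine-tail-lower}--\eqref{eq:balanced-affine-tail-upper}, so all the work is in the minimum distance.

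The basic observation, in the spirit of Lemma~\ref{lem:linear-fixed-difference}, concerns a pair $(\boldsymbol a,\boldsymbol a\oplus\boldsymbol h)$ at $\ell^\infty$-distance below $2^{-k}$. In every coordinate the prefixes must be equal or adjacent. By Lemma~\ref{lem:consecutive-binary-differences}, this forces each block $\boldsymbol h_j\in\{\boldsymbol 0\}\cup\{\boldsymbol D_\tau:0\le\tau<k\}$, so only $(k+1)^d-1$ differences $\boldsymbol h$ are relevant. Moreover, $T_j(\boldsymbol a\oplus\boldsymbol h)=T_j(\boldsymbol a)\oplus A_j\boldsymbol h$, so the tail XOR depends only on $\boldsymbol h$.
\begin{itemize}
\item If $\boldsymbol h_j=\boldsymbol 0$, closeness at scale $2^{-k}\varepsilon$ forces $\mu_\ell(A_{j,\ell}\boldsymbol h)\lesssim 2^{-\ell}$ with $2^{-\ell}\asymp\varepsilon$, as in Lemma~\ref{lem:exact-digital-distance}.
\item If $\boldsymbol h_j=\boldsymbol D_\tau$, the smaller point's tail must start with $\ell$ ones and the larger point's tail with $\ell$ zeros, which forces the first $\ell$ digits of $A_{j}\boldsymbol h$ to be all ones.
\end{itemize}
Since $\boldsymbol h\ne\boldsymbol 0$, each $A_{j,\ell}\boldsymbol h$ is uniform and the coordinates are independent, so the probability for a fixed $\boldsymbol h$ is a product of $d$ one-coordinate factors.

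For the lower tail \eqref{eq:balanced-affine-tail-lower}, we take $\varepsilon=1/(8uk)$ and apply a union bound over the $O(k^d)$ admissible $\boldsymbol h$. Each adjacent coordinate contributes probability $2^{-\ell}\asymp 1/(uk)$. For equal-prefix coordinates we would first try to sharpen the count of Lemma~\ref{lem:small-digital-difference-count}(i), using that both points must also share the $k$-prefix. If a stray factor $\ell\asymp\log k$ appears there, we would absorb it by noting that at least one coordinate must be adjacent, because $\boldsymbol h\ne\boldsymbol 0$ and all equal prefixes would force $\boldsymbol h=\boldsymbol 0$. We would then recount the equal-prefix coordinates jointly with the constraint that $\boldsymbol h$ ranges only over the block patterns. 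The target is $\sum_{\boldsymbol h}\prod_j(\cdot)\le C_d(k\cdot(uk)^{-1})^d=C_du^{-d}$. Eliminating the logarithmic loss in the equal-prefix factors is where I expect the main technical obstacle.

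For the upper tail \eqref{eq:balanced-affine-tail-upper}, set $\varepsilon=8u/k$, and restrict to $\boldsymbol h=(\boldsymbol D_{\tau_1},\ldots,\boldsymbol D_{\tau_d})$ with all $\tau_j\le k/2$ and distinct tuples. Let $E_{\boldsymbol h}$ be the event that each $A_{j,\ell}\boldsymbol D$-image begins with $\ell$ ones, where $2^{-\ell}\asymp u/k$. Put $Z=\sum_{\boldsymbol h}\chi(E_{\boldsymbol h})$, so that $\EE Z\asymp (k/2)^d(u/k)^d\asymp u^d$. The vectors $\boldsymbol h$ for distinct tuples are pairwise linearly independent, so Lemma~\ref{lem:uniform-random-linear-map}(i) makes the events pairwise independent. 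Hence $\operatorname{Var}Z\le\EE Z$, and Lemma~\ref{lem:probabilistic-tools}(ii) gives $\PP(Z=0)\le u^{-d}$.

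On $E_{\boldsymbol h}$ we still need a base point $\boldsymbol a$ whose blocks $\boldsymbol a_j$ end in a $0$ followed by exactly $\tau_j$ ones, and whose first $\ell$ tail digits are all ones in each coordinate. The admissible $\boldsymbol a$ form an affine subspace $\mathcal U$ of dimension $s\ge m-d(k/2+1)\ge m/2-d$. Lemma~\ref{lem:uniform-random-linear-map}(ii), applied with $q=d\ell=O(d\log k)$, shows that the required tail value is missed with probability at most $2^{q-s}$, which is negligible. One must check that conditioning on $E_{\boldsymbol h}$ does not spoil this; we handle it by applying (ii) to the map restricted to a complement of $\operatorname{span}\{\boldsymbol h\}$ inside the direction space of $\mathcal U$. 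Summing the exceptional probabilities over the $O(k^d)$ choices of $\boldsymbol h$ costs at most $k^d2^{-m/3}$, which is absorbed for $k\ge k_0(d)$. Uniformity in $B_j$ and the shifts holds because every bound above depends only on the uniform law of the $A_j$.
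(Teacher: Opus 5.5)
Your plan follows the paper's proof in all essentials and it goes through. The step you single out as the main obstacle is in fact not one.

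\textbf{Lower tail.} Nothing needs to be sharpened. The factor $(\ell+1)2^{-\ell}$ from Lemma~\ref{lem:small-digital-difference-count}(ii) (with $K=1$, zero vector included) occurs only in coordinates with $\boldsymbol h_j=\boldsymbol 0$. Such a coordinate admits a single block value. An adjacent coordinate admits the $k$ values $\boldsymbol D_\tau$, each carrying the factor $2^{-\ell}$. Write $p(\boldsymbol 0)=(\ell+1)2^{-\ell}$ and $p(\boldsymbol D_\tau)=2^{-\ell}$. Then the union bound factorizes:
\[
\sum_{\boldsymbol h\ne\boldsymbol 0}\ \prod_{j=1}^d p(\boldsymbol h_j)
\le\Bigl((\ell+1)2^{-\ell}+k\,2^{-\ell}\Bigr)^d
=2^{-d\ell}(k+\ell+1)^d .
\]
Take $\ell=\lceil\log_2k\rceil+\lceil\log_2u\rceil$. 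Then $2^{-\ell}\le(uk)^{-1}$, and $\ell+1=O(\log k)$ for $u\le k/4$. So the right-hand side is at most $u^{-d}\bigl(1+(\ell+1)/k\bigr)^d\le C_du^{-d}$. The logarithm survives, but it is dominated by the factor $k$ coming from the adjacent coordinates. This is exactly the paper's grouping by the number $r$ of adjacent coordinates, with terms $2^{-d\ell_+}\binom dr k^r(\ell_++1)^{d-r}$. Sharpening via the shared $k$-prefix would not have helped anyway, since the prefix carries no information about the tail difference $A_{j,\ell}\boldsymbol h$.

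\textbf{Upper tail.} Your second-moment step coincides with the paper's. The realization step differs mildly.
\begin{itemize}
\item The paper fixes the last $H+1$ digits of every block, so that all $\boldsymbol a^0(\boldsymbol\tau)$ and $\boldsymbol D(\boldsymbol\tau)$ lie in one subspace $W$. It selects the lexicographically first successful $\widehat{\boldsymbol\tau}$ as a function of $A_{j,\ell_-}|_W$ and absorbs the $U$-component of $\boldsymbol\eta$. It then applies Lemma~\ref{lem:uniform-random-linear-map}(ii) once, conditionally on $A_{j,\ell_-}|_W$.
\item You instead take a union bound over all $(H+1)^d$ candidates. This is valid, and it makes your conditioning remark unnecessary. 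Let $F_{\boldsymbol h}$ be the event that no admissible $\boldsymbol a$ has the required tail digits. Then $\PP(E_{\boldsymbol h}\cap F_{\boldsymbol h})\le\PP(F_{\boldsymbol h})\le2^{d\ell-s}$. This follows from Lemma~\ref{lem:uniform-random-linear-map}(ii) applied unconditionally to the fixed affine subspace $\{\boldsymbol a\oplus\boldsymbol\eta:\boldsymbol a\in\mathcal U\}$ and the uniform stacked map $\boldsymbol a\mapsto(A_{1,\ell}\boldsymbol a,\ldots,A_{d,\ell}\boldsymbol a)$.
\end{itemize}
The paper's conditioning saves only the factor $O(k^d)$, which $2^{-\Theta(dk)}$ absorbs anyway. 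Your version avoids the $W\oplus U$ bookkeeping and the absorption of the $U$-component of $\boldsymbol\eta$.
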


\begin{proof}
We use the representation in Lemma~\ref{lem:balanced-prefix-indexed-form},
indexed by $\boldsymbol{a}\in(\mathbb F_2^k)^d$.  Fix $k$ sufficiently large and
$1\le u\le k/4$.  For the lower tail, put
$\ell_+=\lceil\log_2 k\rceil+\lceil\log_2 u\rceil$.
For every $\boldsymbol{a},\boldsymbol{a}'\in(\mathbb F_2^k)^d$,
both $\boldsymbol{\eta}$ and $\boldsymbol{c}_{j,\ell_+}$ cancel, and hence
\[
T_{j,\ell_+}(\boldsymbol{a})\oplus T_{j,\ell_+}(\boldsymbol{a}')
=A_{j,\ell_+}(\boldsymbol{a}\oplus\boldsymbol{a}').
\]
Suppose that two distinct points are at $\ell^\infty$-distance less than
$2^{-k-\ell_+-1}$.  In every coordinate, they lie in the same or adjacent
level-$k$ dyadic intervals.  Write
$\boldsymbol{a}_j=\operatorname{bin}_k(n_j)$ and
$\boldsymbol{a}'_j=\operatorname{bin}_k(n'_j)$ for the corresponding interval
indices.  Then
$|n_j-n'_j|\le1$.  If the intervals are adjacent in exactly $r$ coordinates,
Lemma~\ref{lem:consecutive-binary-differences} shows that there are at most
$\binom dr k^r$ possible values of
$\boldsymbol{a}\oplus\boldsymbol{a}'$.  Fix one such nonzero difference
$\boldsymbol{D}=(\boldsymbol{D}_1,\ldots,\boldsymbol{D}_d)$.  Since
$\boldsymbol{D}\ne\boldsymbol{0}$,
Lemma~\ref{lem:uniform-random-linear-map}(i), together with independence across
$j$, shows that the vectors $A_{j,\ell_+}\boldsymbol{D}$ are independent and
uniform on $\mathbb F_2^{\ell_+}$.  For this fixed $\boldsymbol{D}$, we bound
the probability that a pair with index difference $\boldsymbol{D}$ has
distance less than $2^{-k-\ell_+-1}$.  Let
$I(\boldsymbol{D})=\{j:\boldsymbol{D}_j\ne\boldsymbol{0}_k\}$.
Then $|I(\boldsymbol{D})|=r$, and these are exactly the coordinates in which the
level-$k$ intervals are adjacent.  In each such coordinate, the two points must
approach their common boundary from opposite sides, which requires
$A_{j,\ell_+}\boldsymbol{D}=\boldsymbol{1}_{\ell_+}$.  This condition has
probability $2^{-\ell_+}$.  In each of the remaining $d-r$ coordinates, the two
points lie in the same level-$k$ interval.  There closeness requires
$\mu_{\ell_+}(A_{j,\ell_+}\boldsymbol{D})\le2^{-\ell_+}$.
Lemma~\ref{lem:small-digital-difference-count}(ii) gives at most $\ell_++1$
admissible values, including the zero vector, so this condition has probability
at most $(\ell_++1)2^{-\ell_+}$.  Hence the probability for this fixed
$\boldsymbol{D}$ is at most
\[
2^{-r\ell_+}\bigl((\ell_++1)2^{-\ell_+}\bigr)^{d-r}
=2^{-d\ell_+}(\ell_++1)^{d-r}.
\]
Grouping the union bound by $r$ gives
\[
\begin{aligned}
\PP\{R_\infty(P_{2^m})<2^{-k-\ell_+-1}\}
&\le
2^{-d\ell_+}
\sum_{r=1}^d\binom dr k^r(\ell_++1)^{d-r}\\
&\le
2^{-d\ell_+}(k+\ell_++1)^d\\
&\le
u^{-d}\left(1+\frac{\ell_++1}{k}\right)^d
\le C_du^{-d}.
\end{aligned}
\]
The penultimate inequality follows from $2^{-\ell_+}\le (uk)^{-1}$.
Moreover, $u\le k/4$ gives $\ell_+=O(\log k)$, so the final factor is bounded
for sufficiently large $k$.  Also, $2^{-k-\ell_+-1}\ge 2^{-k}/(8uk)$.
This proves the first finite-level bound.

For the upper tail, put
\[
s=\lceil\log_2u\rceil,
\qquad
\ell_- = \left\lfloor\log_2 k\right\rfloor-s,
\qquad
H=\lfloor k/2\rfloor.
\]
The range of $u$ ensures that $\ell_-\ge1$.  For $0\le \tau\le H$, let
$\boldsymbol{D}_{\tau}$ be as in
Lemma~\ref{lem:consecutive-binary-differences}.  We consider the difference
vectors
\[
\boldsymbol{D}(\boldsymbol{\tau})
=(\boldsymbol{D}_{\tau_1},\ldots,\boldsymbol{D}_{\tau_d}),
\qquad
\boldsymbol{\tau}=(\tau_1,\ldots,\tau_d)\in\{0,\ldots,H\}^d.
\]
For a fixed $\boldsymbol{\tau}$, it is enough to find
$\boldsymbol{a}=(\boldsymbol{a}_1,\ldots,\boldsymbol{a}_d)$ such that each
$\boldsymbol{a}_j=\operatorname{bin}_k(n_j)$ indexes the left interval of an
adjacent pair, the corresponding index difference is
$\boldsymbol{D}(\boldsymbol{\tau})$, and
\[
A_{j,\ell_-}\boldsymbol{D}(\boldsymbol{\tau})
=\boldsymbol{1}_{\ell_-}
\quad\text{and}\quad
T_{j,\ell_-}(\boldsymbol{a})=\boldsymbol{1}_{\ell_-}
\qquad\text{for }j=1,\ldots,d.
\]
Indeed, the point in the corresponding right interval then has its first
$\ell_-$ tail digits equal to zero, so the two points are at distance at most
$2^{1-k-\ell_-}$.  We first find a $\boldsymbol{\tau}$ satisfying the first
condition and then realize the second condition for this
$\boldsymbol{\tau}$.  For each $\boldsymbol{\tau}$, let
\[
E_{\boldsymbol{\tau}}
=
\left\{
A_{j,\ell_-}\boldsymbol{D}(\boldsymbol{\tau})
=\boldsymbol{1}_{\ell_-}
\ \text{for }j=1,\ldots,d
\right\}.
\]
We have $\PP(E_{\boldsymbol{\tau}})=2^{-d\ell_-}$.  Since the vectors
$\boldsymbol{D}(\boldsymbol{\tau})$ are distinct and nonzero, they are linearly
independent in pairs over $\mathbb F_2$.  Hence the events
$E_{\boldsymbol{\tau}}$ are pairwise independent.  Define
\[
Z=\sum_{\boldsymbol{\tau}\in\{0,\ldots,H\}^d}
\chi(E_{\boldsymbol{\tau}}).
\]
Thus $Z$ counts these indices, and
$\EE[Z]=(H+1)^d2^{-d\ell_-}\ge2^{d(s-1)}$.
Since the indicators are pairwise independent,
\[
\operatorname{Var}(Z)
=\sum_{\boldsymbol{\tau}}
\operatorname{Var}\!\left(\chi(E_{\boldsymbol{\tau}})\right)
\le \sum_{\boldsymbol{\tau}}\EE\!\left[\chi(E_{\boldsymbol{\tau}})\right]
=\EE[Z].
\]
Lemma~\ref{lem:probabilistic-tools}(ii) gives
$\PP(Z=0)\le2^{d(1-s)}$.

On $\{Z>0\}$, let $\widehat{\boldsymbol{\tau}}$ be the lexicographically first
$\boldsymbol{\tau}\in\{0,\ldots,H\}^d$ for which $E_{\boldsymbol{\tau}}$ occurs.
It remains to realize
this difference vector by left intervals with the required tail digits.
Define
\[
\mathcal A_{\widehat{\boldsymbol{\tau}}}
=
\left\{
\boldsymbol{a}\in(\mathbb F_2^k)^d:
(a_{j,k-H},\ldots,a_{j,k})
=(\boldsymbol{0}_{H-\widehat\tau_j+1},
  \boldsymbol{1}_{\widehat\tau_j})
\ \text{for }j=1,\ldots,d
\right\}.
\]
For $\boldsymbol{a}\in\mathcal A_{\widehat{\boldsymbol{\tau}}}$, write
$\boldsymbol{a}_j=\operatorname{bin}_k(n_j)$.  Its prescribed suffix has
exactly $\widehat\tau_j$ trailing ones, so
Lemma~\ref{lem:consecutive-binary-differences} gives
$\boldsymbol{a}_j\oplus\boldsymbol{D}_{\widehat\tau_j}
=\operatorname{bin}_k(n_j+1)$.
Thus the associated level-$k$ intervals are respectively the $n_j$-th and
$(n_j+1)$-st intervals, which form an adjacent pair.  If some
$\boldsymbol{a}\in\mathcal A_{\widehat{\boldsymbol{\tau}}}$ satisfies
\begin{equation}\label{eq:upper-tail-free-digit-condition}
T_{j,\ell_-}(\boldsymbol{a})=\boldsymbol{1}_{\ell_-}
\qquad\text{for }j=1,\ldots,d,
\end{equation}
then $E_{\widehat{\boldsymbol{\tau}}}$ gives
\[
T_{j,\ell_-}
\bigl(\boldsymbol{a}\oplus
\boldsymbol{D}(\widehat{\boldsymbol{\tau}})\bigr)
=
T_{j,\ell_-}(\boldsymbol{a})
\oplus A_{j,\ell_-}\boldsymbol{D}(\widehat{\boldsymbol{\tau}})
=\boldsymbol{0}_{\ell_-}
\]
for every $j$.  The two corresponding points therefore approach the common
boundary of the adjacent intervals from the left and right and satisfy
\[
\left\lVert
\boldsymbol{x}_{\boldsymbol{a}}
-
\boldsymbol{x}_{\boldsymbol{a}\oplus
\boldsymbol{D}(\widehat{\boldsymbol{\tau}})}
\right\rVert_\infty
\le2^{1-k-\ell_-}.
\]
Consequently, on $\{Z>0\}$, the event
$\{R_\infty(P_{2^m})>2^{1-k-\ell_-}\}$ can occur only if no
$\boldsymbol{a}\in\mathcal A_{\widehat{\boldsymbol{\tau}}}$ satisfying
\eqref{eq:upper-tail-free-digit-condition} exists.

Let $U$ be the subspace of
$\boldsymbol{u}=(\boldsymbol{u}_1,\ldots,\boldsymbol{u}_d)$ for which the last
$H+1$ entries of each $\boldsymbol{u}_i$ are zero.  Let $W$ be the subspace
for which the first $k-H-1$ entries of each $\boldsymbol{u}_i$ are zero.  For
$\boldsymbol{\tau}\in\{0,\ldots,H\}^d$, set
\[
\boldsymbol{a}^{0}(\boldsymbol{\tau})
=
\bigl((\boldsymbol{0}_{k-\tau_i},\boldsymbol{1}_{\tau_i})\bigr)_{i=1}^d.
\]
Then
$\mathcal A_{\widehat{\boldsymbol{\tau}}}
=\boldsymbol{a}^{0}(\widehat{\boldsymbol{\tau}})\oplus U$, and both
$\boldsymbol{a}^{0}(\boldsymbol{\tau})$ and
$\boldsymbol{D}(\boldsymbol{\tau})$ belong to $W$.  Since every
$E_{\boldsymbol{\tau}}$ is determined by
$A_{j,\ell_-}|_W$, $j=1,\ldots,d$, so is the lexicographically first
successful $\widehat{\boldsymbol{\tau}}$.  Since
$(\mathbb F_2^k)^d=W\oplus U$, conditional on these restrictions the maps
$A_{j,\ell_-}|_U$ remain independent and uniform.  Decompose
$\boldsymbol{\eta}=\boldsymbol{\eta}_W\oplus\boldsymbol{\eta}_U$ according to
$W\oplus U$.  Since $\boldsymbol{u}\mapsto
\boldsymbol{u}\oplus\boldsymbol{\eta}_U$ is a bijection of $U$, the
$U$-component of $\boldsymbol{\eta}$ can be absorbed into
$\boldsymbol{u}$.  Hence, writing
$\boldsymbol{a}=\boldsymbol{a}^{0}(\widehat{\boldsymbol{\tau}})\oplus
\boldsymbol{u}$, the map
\[
\boldsymbol{u}
\longmapsto
\bigl(T_{1,\ell_-}(\boldsymbol{a}),\ldots,
T_{d,\ell_-}(\boldsymbol{a})\bigr)
\]
has a fixed translation and the uniform random linear part
\[
\boldsymbol{u}
\longmapsto
\bigl(A_{1,\ell_-}\boldsymbol{u},\ldots,
A_{d,\ell_-}\boldsymbol{u}\bigr)
\in\mathbb F_2^{d\ell_-}.
\]
For all sufficiently large $k$, we have
$\dim U=d(k-H-1)\ge d\ell_-$.  Lemma~\ref{lem:uniform-random-linear-map}(ii)
therefore gives
\[
\PP\left(
\nexists\,\boldsymbol{a}\in\mathcal A_{\widehat{\boldsymbol{\tau}}}
\text{ satisfying }\eqref{eq:upper-tail-free-digit-condition}
\,\middle|\,
A_{j,\ell_-}|_W,\ j=1,\ldots,d
\right)
\le2^{d\ell_--d(k-H-1)}.
\]
It follows that
\[
\PP\left(R_\infty(P_{2^m})>2^{1-k-\ell_-}\right)
\le2^{d(1-s)}+2^{d\ell_--d(k-H-1)}.
\]
Now
\[
2^{d(1-s)}\le2^du^{-d},
\qquad
2^{d\ell_--d(k-H-1)}
\le2^dk^du^{-d}2^{-dk/2}\le2^du^{-d}
\]
for all sufficiently large $k$.  Finally,
$2^{1-k-\ell_-}\le8u\,2^{-k}/k$.
This proves the second finite-level bound.

The two bounds imply
\[
R_\infty(P_{2^m})
=\Theta_{\PP}\!\left(\frac{2^{-k}}{k}\right)
=\Theta_{\PP}\!\left(2^{-m/d}m^{-1}\right).
\]
The balanced prefix gives $h_\infty(P_{2^m})\asymp_d2^{-k}$.  Hence
$\rho_\infty(P_{2^m})=\Theta_{\PP}(k)=\Theta_{\PP}(m)$.
\end{proof}

\subsection{One-dimensional scrambling}
\label{subsec:one-dimensional-matrix-linear}

\begin{corollary}
\label{cor:one-dimensional-matrix-linear}
For each $m\in\mathbb N$, let
$C^{(m)}\in\mathbb F_2^{\mathbb N\times m}$ generate a one-dimensional
binary digital $(0,m,1)$-net $P_{2^m}$.  Then, as $m\to\infty$,
\[
R_\infty(P_{2^m}^{\mathrm{scr}})=\Theta_{\PP}(2^{-m}m^{-1}),
\qquad
\rho_\infty(P_{2^m}^{\mathrm{scr}})=\Theta_{\PP}(m).
\]
Moreover, \eqref{eq:balanced-affine-tail-lower} and
\eqref{eq:balanced-affine-tail-upper} hold in both cases with $d=1$, $k=m$,
and $P_{2^m}$ replaced by $P_{2^m}^{\mathrm{scr}}$.  In the case of matrix
scrambling, the constants and bounds are uniform over the fixed shift
$\boldsymbol{\delta}$.
\end{corollary}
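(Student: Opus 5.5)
The plan is to show that, for $d=1$ and $k=m$, the scrambled net $P_{2^m}^{\mathrm{scr}}$ has the same law as a point set in the balanced-prefix random affine-tail model of Definition~\ref{def:balanced-prefix-random-affine}. Once this is done, the finite-level bounds \eqref{eq:balanced-affine-tail-lower} and \eqref{eq:balanced-affine-tail-upper}, and the $\Theta_{\PP}$ conclusions, follow directly from Theorem~\ref{thm:balanced-affine-tail-sharp}.

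\textbf{Structure of the scrambled generator.} Since $P_{2^m}$ is a digital $(0,m,1)$-net, the leading $m\times m$ block $C_m$ of $C^{(m)}$ is invertible. After matrix scrambling, the generating matrix becomes $L C^{(m)}$. Write $L=\begin{pmatrix}L_{11}&0\\L_{21}&L_{22}\end{pmatrix}$, where $L_{11}$ is the leading $m\times m$ block. In base $2$ all diagonal entries equal $1$, so $L_{11}$ is a uniformly random unit lower triangular matrix. The scrambled generator is then
\[
\begin{pmatrix}L_{11}C_m\\ L_{21}C_m+L_{22}G\end{pmatrix},
\]
where $G$ denotes the rows of $C^{(m)}$ below row $m$. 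Set $B:=L_{11}C_m$, which is invertible. The rows of $L_{21}$ have independent uniform entries and are independent of $L_{11}$ and $L_{22}$. Since $C_m$ is invertible, $L_{21}C_m$ is a uniform random $\mathbb N\times m$ matrix. This stays true after adding the term $L_{22}G$, because $L_{22}G$ is independent of $L_{21}$. Hence, conditional on $L_{11}$, the tail block is uniform with independent entries, and the resulting conditional point set is exactly the model with $d=1$, $k=m$, prefix matrix $B$, and shift $\boldsymbol{\delta}$.

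\textbf{Conclusion.} The main point needing care is that the bounds in Theorem~\ref{thm:balanced-affine-tail-sharp} are uniform over the invertible matrix $B$ and over all shifts. Because of this, we can average the conditional bounds over $L_{11}$, which yields the fixed-shift matrix-scrambling statements uniformly in $\boldsymbol{\delta}$. Averaging once more over the random shift gives the linear-scrambling statements.

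For the mesh ratio, the prefix block $B$ is invertible, so there is exactly one point in each dyadic interval of length $2^{-m}$. Hence $h_\infty\asymp 2^{-m}$, and combining this with the bounds on $R_\infty$ gives $\rho_\infty=\Theta_{\PP}(m)$.
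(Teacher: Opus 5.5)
Your proposal is correct and is essentially the paper's own argument. The paper likewise conditions on the leading block $L_m$ (and, for linear scrambling, on the shift), takes $B=L_mC_m$ as the prefix matrix, and uses that the rows of $L$ below row $m$ give independent uniform tail rows after right multiplication by the invertible $C_m$ and addition of an independent term; it then applies Theorem~\ref{thm:balanced-affine-tail-sharp}, whose uniformity over the prefix matrix and the shift allows the conditioning to be averaged out.
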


\begin{proof}
Fix $m$ and write $C=C^{(m)}$.  Let $C_m$ and $L_m$ be the leading
$m\times m$ submatrices of $C$ and the scrambling matrix $L$, respectively.
Both are invertible.  Conditional on $L_m$ and, in the linear scrambling
case, on the digital shift, the point set generated by $LC$ follows the
balanced-prefix random affine-tail model with $d=1$, $k=m$, and prefix matrix
$B=L_mC_m$.  Indeed, in every row below row $m$, the first $m$ entries of $L$
form an independent uniform vector.  Multiplication by the invertible matrix
$C_m$ and addition of the remaining terms preserve independence and
uniformity.  The result follows from
Theorem~\ref{thm:balanced-affine-tail-sharp}, whose bounds are uniform over
the prefix matrix and the shift.
\end{proof}

\begin{corollary}
\label{cor:one-dimensional-scrambled-sequence}
Let $\mathcal S$ be a binary digital $(0,1)$-sequence.  For every
$\varepsilon>0$, set $m_r=\lceil r^{1+\varepsilon}\rceil$ and
$N_r=2^{m_r}$.  Then, almost surely,
\[
\rho_\infty(P_{N_r}^{\mathrm{scr}})
\ge \frac{r^\varepsilon}{8(\log(2+r))^2}
\]
for all sufficiently large $r$.  Thus the mesh ratio diverges along the sparse
subsequence $(N_r)_{r\ge1}$, and both scrambled sequences are almost surely not
quasi-uniform.
\end{corollary}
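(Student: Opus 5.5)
The plan is to use the upper-tail bound \eqref{eq:balanced-affine-tail-upper} from Corollary~\ref{cor:one-dimensional-matrix-linear}, with $d=1$, $k=m_r$, and a slowly growing $u=u_r$, and then apply the first Borel--Cantelli lemma along the sparse subsequence $m_r=\lceil r^{1+\varepsilon}\rceil$. The first $2^{m_r}$ points of a binary digital $(0,1)$-sequence form a binary digital $(0,m_r,1)$-net. The scrambled sequence uses one fixed pair of $L$ and shift across all levels, so the point set $P_{N_r}^{\mathrm{scr}}$ at each level $r$ has exactly the law treated in Corollary~\ref{cor:one-dimensional-matrix-linear}. Borel--Cantelli requires no independence across $r$, so this common randomness causes no difficulty.

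The key steps are as follows. Take $u_r=(\log(2+r))^2$; for large $r$ this satisfies $1\le u_r\le m_r/4$. With $d=1$, \eqref{eq:balanced-affine-tail-upper} gives
\[
\PP\Bigl(R_\infty(P_{N_r}^{\mathrm{scr}})>8u_r\,2^{-m_r}/m_r\Bigr)\le C_1(\log(2+r))^{-2}.
\]
This bound is \emph{not} summable in $r$, so $u_r$ has to be chosen more carefully. The remedy is to take $u_r=r^{1+\varepsilon'}$ for a small $\varepsilon'$, or simply $u_r=r^{2}$; this is admissible because $u_r\le m_r/4$ when $1+\varepsilon>$ the exponent. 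I would instead pick $u_r=r^{\theta}$ with $1<\theta<1+\varepsilon$, so that $\sum_r u_r^{-1}<\infty$ while $u_r\le m_r/4$ for large $r$. Borel--Cantelli then shows that, almost surely, $R_\infty(P_{N_r}^{\mathrm{scr}})\le 8r^\theta 2^{-m_r}/m_r$ for all large $r$.

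Next, combine this with the universal covering bound \eqref{eq:universal-geometric-bounds}, $h_\infty\ge 2^{-m_r}/2$, and with $\rho_\infty=2h_\infty/R_\infty$. This gives $\rho_\infty(P_{N_r}^{\mathrm{scr}})\ge m_r/(8r^\theta)\ge r^{1+\varepsilon-\theta}/8$.

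The stated rate $r^\varepsilon/(8(\log(2+r))^2)$ corresponds to the choice $u_r=r(\log(2+r))^2$. This choice is summable, since $\sum_r r^{-1}(\log(2+r))^{-2}<\infty$, and it is admissible since $r(\log r)^2\le r^{1+\varepsilon}/4$ eventually. With it, $\rho_\infty\ge m_r/(8u_r)\ge r^{\varepsilon}/(8(\log(2+r))^2)$, which is exactly the claim, so I will use this $u_r$. For matrix scrambling, the bound of Corollary~\ref{cor:one-dimensional-matrix-linear} is uniform over the fixed shift, and this suffices for each fixed shift. For linear scrambling, the joint law is used directly.

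Finally, the right-hand side $r^\varepsilon/(8(\log(2+r))^2)$ tends to infinity, so $\sup_N\rho_\infty(P_N^{\mathrm{scr}})=\infty$ almost surely, and the sequence is almost surely not quasi-uniform. The main obstacle is only the bookkeeping: checking that the chosen $u_r$ stays in the admissible range $1\le u_r\le m_r/4$ with $k_0(1)\le m_r$, while keeping $\sum_r u_r^{-1}$ finite. The sparsity $m_r\approx r^{1+\varepsilon}$ is exactly what makes both conditions hold at once, because along all $m$ a summable $u$ would exceed $m/4$.
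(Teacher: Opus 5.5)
Your final argument is the paper's proof. You take $u_r=r(\log(2+r))^2$, apply the $d=1$ upper-tail bound \eqref{eq:balanced-affine-tail-upper} via Corollary~\ref{cor:one-dimensional-matrix-linear}, use the first Borel--Cantelli lemma on $\sum_r u_r^{-1}<\infty$, and combine with $h_\infty\ge 2^{-m_r-1}$ to get $\rho_\infty\ge m_r/(8u_r)\ge r^\varepsilon/(8(\log(2+r))^2)$. Delete the exploratory choices $u_r=(\log(2+r))^2$ and $u_r=r^2$: the first is not summable, and the second violates $u_r\le m_r/4$ whenever $\varepsilon\le1$.
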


\begin{proof}
Put $u_r=r(\log(2+r))^2$.  Since $m_r\ge r^{1+\varepsilon}$, one has
$1\le u_r\le m_r/4$ for all sufficiently large $r$.  The upper-tail bound in
Theorem~\ref{thm:balanced-affine-tail-sharp}, applied with $d=1$, therefore gives
\[
\PP\left(
R_\infty(P_{N_r}^{\mathrm{scr}})>8u_r\frac{2^{-m_r}}{m_r}
\right)
\le C u_r^{-1}.
\]
The right-hand side is summable.  The first Borel--Cantelli lemma therefore
gives, almost surely,
\[
R_\infty(P_{N_r}^{\mathrm{scr}})
\le 8u_r\frac{2^{-m_r}}{m_r}
\]
for all sufficiently large $r$.  The universal covering lower bound
\eqref{eq:universal-geometric-bounds} gives
$h_\infty(P_{N_r}^{\mathrm{scr}})\ge2^{-m_r-1}$, and hence
\[
\rho_\infty(P_{N_r}^{\mathrm{scr}})
=\frac{2h_\infty(P_{N_r}^{\mathrm{scr}})}
{R_\infty(P_{N_r}^{\mathrm{scr}})}
\ge\frac{m_r}{8u_r}
\ge\frac{r^\varepsilon}{8(\log(2+r))^2}.\qedhere
\]
\end{proof}

\begin{remark}
At the ordinary dyadic levels, the $u^{-1}$ tail bound is not summable for any
choice $u_m=o(m)$.  The super-geometric subsequence in the preceding corollary
makes the exceptional probabilities summable and proves
non-quasi-uniformity, but the argument does not prove almost-sure divergence
along all dyadic levels.  This is analogous to the obstruction in
\cite[Lemma~1.4]{DGLPS25} to transferring quasi-uniformity from a subsequence
with unbounded successive ratios to the full sequence.
\end{remark}

\section*{Declaration of generative AI use}
The author used OpenAI ChatGPT 5.6 Sol and Codex for exploring and checking
proof strategies, deriving and drafting parts of several proofs, conducting
literature searches, and providing editorial assistance.  All mathematical
arguments, calculations, references, and conclusions were independently
checked and verified by the author, who takes full responsibility for the
contents of the paper.

\bibliographystyle{amsplain}
\bibliography{ref}

@article{ArratiaGoldsteinGordon1989,
  author  = {Arratia, R. and Goldstein, L. and Gordon, L.},
  title   = {Two moments suffice for {Poisson} approximations: the {Chen--Stein} method},
  journal = {Annals of Probability},
  volume  = {17},
  number  = {1},
  pages   = {9--25},
  year    = {1989}
}

@article{BlackburnHombergerWinkler2019,
  author  = {Blackburn, S. R. and Homberger, C. and Winkler, P.},
  title   = {The minimum {Manhattan} distance and minimum jump of permutations},
  journal = {Journal of Combinatorial Theory, Series A},
  volume  = {161},
  pages   = {364--386},
  year    = {2019}
}

@article{DGLPS25,
  author       = {Dick, J. and Goda, T. and Larcher, G. and Pillichshammer, F. and Suzuki, K.},
  title        = {On the quasi-uniformity properties of {quasi-Monte Carlo} point sets and sequences -- {Part I}: {Lattices} and {Kronecker} sequences},
  journal      = {Mathematics of Computation},
  pages        = {to appear},
  year         = {2026}
}

@article{DGS25,
  author       = {Dick, J. and Goda, T. and Suzuki, K.},
  title        = {On the quasi-uniformity properties of {quasi-Monte Carlo} point sets and sequences -- {Part II}: {Digital} nets and sequences},
  journal      = {Mathematics of Computation},
  pages        = {to appear},
  year         = {2026}
}

@book{DP10,
  author    = {Dick, J. and Pillichshammer, F.},
  title     = {Digital Nets and Sequences: Discrepancy Theory and {quasi-Monte Carlo} Integration},
  publisher = {Cambridge University Press},
  address   = {Cambridge},
  year      = {2010}
}

@article{Doerr2022,
  author  = {Doerr, B.},
  title   = {A sharp discrepancy bound for jittered sampling},
  journal = {Mathematics of Computation},
  volume  = {91},
  number  = {336},
  pages   = {1871--1892},
  year    = {2022}
}

@article{DubhashiRanjan1998,
  author  = {Dubhashi, D. P. and Ranjan, D.},
  title   = {Balls and bins: a study in negative dependence},
  journal = {Random Structures \& Algorithms},
  volume  = {13},
  number  = {2},
  pages   = {99--124},
  year    = {1998}
}

@misc{GLT26,
  author       = {Goda, T. and Liu, Y. and Tempone, R.},
  title        = {{Quasi-Monte Carlo} with a {Hankel} random digital net},
  howpublished = {arXiv:2604.24105},
  year         = {2026}
}

@article{JoagDevProschan1983,
  author  = {Joag-Dev, K. and Proschan, F.},
  title   = {Negative association of random variables with applications},
  journal = {Annals of Statistics},
  volume  = {11},
  number  = {1},
  pages   = {286--295},
  year    = {1983}
}

@article{McKayBeckmanConover1979,
  author  = {McKay, M. D. and Beckman, R. J. and Conover, W. J.},
  title   = {A comparison of three methods for selecting values of input variables in the analysis of output from a computer code},
  journal = {Technometrics},
  volume  = {21},
  number  = {2},
  pages   = {239--245},
  year    = {1979}
}

@book{Nie92,
  author    = {Niederreiter, H.},
  title     = {Random Number Generation and {quasi-Monte Carlo} Methods},
  series    = {CBMS-NSF Regional Conference Series in Applied Mathematics},
  volume    = {63},
  publisher = {Society for Industrial and Applied Mathematics (SIAM)},
  address   = {Philadelphia, PA},
  year      = {1992}
}

@incollection{Owe95,
  author    = {Owen, A. B.},
  title     = {Randomly permuted $(t,m,s)$-nets and $(t,s)$-sequences},
  booktitle = {Monte Carlo and Quasi-Monte Carlo Methods in Scientific Computing},
  series    = {Lecture Notes in Statistics},
  volume    = {106},
  publisher = {Springer},
  address   = {New York},
  pages     = {299--317},
  year      = {1995}
}

@article{Pan26,
  author  = {Pan, Z.},
  title   = {Automatic optimal-rate convergence of randomized nets using median-of-means},
  journal = {Mathematics of Computation},
  volume  = {95},
  number  = {359},
  pages   = {1415--1446},
  year    = {2026},
  doi     = {10.1090/mcom/4093}
}

@article{PausingerSteinerberger2016,
  author  = {Pausinger, F. and Steinerberger, S.},
  title   = {On the discrepancy of jittered sampling},
  journal = {Journal of Complexity},
  volume  = {33},
  pages   = {199--216},
  year    = {2016}
}

@article{PronzatoZhigljavsky2023,
  author  = {Pronzato, L. and Zhigljavsky, A.},
  title   = {Quasi-uniform designs with optimal and near-optimal uniformity constant},
  journal = {Journal of Approximation Theory},
  volume  = {294},
  pages   = {105931},
  year    = {2023}
}

@article{ReznikovSaff2016,
  author  = {Reznikov, A. and Saff, E. B.},
  title   = {The covering radius of randomly distributed points on a manifold},
  journal = {International Mathematics Research Notices},
  number  = {19},
  pages   = {6065--6094},
  year    = {2016}
}

@incollection{SchulteThale2016,
  author    = {Schulte, M. and Th{\"a}le, C.},
  title     = {{Poisson} point process convergence and extreme values in stochastic geometry},
  booktitle = {Stochastic Analysis for Poisson Point Processes},
  series    = {Bocconi \& Springer Series},
  volume    = {7},
  publisher = {Springer},
  address   = {Cham},
  pages     = {255--294},
  year      = {2016}
}

@article{Stein1987,
  author  = {Stein, M.},
  title   = {Large sample properties of simulations using {Latin} hypercube sampling},
  journal = {Technometrics},
  volume  = {29},
  number  = {2},
  pages   = {143--151},
  year    = {1987}
}

@article{Te13,
  author  = {Tezuka, S.},
  title   = {On the discrepancy of generalized {Niederreiter} sequences},
  journal = {Journal of Complexity},
  volume  = {29},
  number  = {3--4},
  pages   = {240--247},
  year    = {2013}
}

@article{WiartLemieuxDong2021,
  author  = {Wiart, J. and Lemieux, C. and Dong, G. Y.},
  title   = {On the dependence structure and quality of scrambled $(t,m,s)$-nets},
  journal = {Monte Carlo Methods and Applications},
  volume  = {27},
  number  = {1},
  pages   = {1--26},
  year    = {2021}
}

@book{Wendland2005,
  author    = {Wendland, H.},
  title     = {Scattered Data Approximation},
  series    = {Cambridge Monographs on Applied and Computational Mathematics},
  volume    = {17},
  publisher = {Cambridge University Press},
  address   = {Cambridge},
  year      = {2005}
}

@article{SchabackWendland2006,
  author  = {Schaback, R. and Wendland, H.},
  title   = {Kernel techniques: from machine learning to meshless methods},
  journal = {Acta Numerica},
  volume  = {15},
  pages   = {543--639},
  year    = {2006}
}

@article{PronzatoMuller2012,
  author  = {Pronzato, L. and M{\"u}ller, W. G.},
  title   = {Design of computer experiments: space filling and beyond},
  journal = {Statistics and Computing},
  volume  = {22},
  number  = {3},
  pages   = {681--701},
  year    = {2012}
}

@incollection{GruenschlossHanikaSchwedeKeller2008,
  author    = {Gr{\"u}nschlo{\ss}, L. and Hanika, J. and Schwede, R. and Keller, A.},
  title     = {$(t,m,s)$-nets and maximized minimum distance},
  booktitle = {Monte Carlo and Quasi-Monte Carlo Methods 2006},
  publisher = {Springer},
  address   = {Berlin},
  pages     = {397--412},
  year      = {2008}
}

@incollection{GruenschlossKeller2009,
  author    = {Gr{\"u}nschlo{\ss}, L. and Keller, A.},
  title     = {$(t,m,s)$-nets and maximized minimum distance, {Part II}},
  booktitle = {Monte Carlo and Quasi-Monte Carlo Methods 2008},
  publisher = {Springer},
  address   = {Berlin},
  pages     = {395--409},
  year      = {2009}
}

@article{Goda2024,
  author  = {Goda, T.},
  title   = {The {Sobol'} sequence is not quasi-uniform in dimension 2},
  journal = {Proceedings of the American Mathematical Society},
  volume  = {152},
  number  = {8},
  pages   = {3209--3213},
  year    = {2024}
}

@article{GodaHoferSuzuki2026,
  author  = {Goda, T. and Hofer, R. and Suzuki, K.},
  title   = {Disproving the quasi-uniformity of the {Halton} sequences and of some {Halton}-type sequences},
  journal = {Journal of Complexity},
  volume  = {95},
  pages   = {102047},
  year    = {2026},
  doi     = {10.1016/j.jco.2026.102047}
}

@article{Matousek1998,
  author  = {Matou{\v{s}}ek, J.},
  title   = {On the {$L_2$}-discrepancy for anchored boxes},
  journal = {Journal of Complexity},
  volume  = {14},
  number  = {4},
  pages   = {527--556},
  year    = {1998}
}

@article{Owen2003,
  author  = {Owen, A. B.},
  title   = {Variance with alternative scramblings of digital nets},
  journal = {ACM Transactions on Modeling and Computer Simulation},
  volume  = {13},
  number  = {4},
  pages   = {363--378},
  year    = {2003}
}

@misc{Suzuki2025,
  author       = {Suzuki, K.},
  title        = {Exact $\ell^\infty$-separation radius of {Sobol'} sequences in dimension 2},
  howpublished = {arXiv:2508.14803},
  year         = {2025}
}

@misc{SakaiGoda2026,
  author       = {Sakai, N. and Goda, T.},
  title        = {Space-filling lattice designs for computer experiments},
  howpublished = {arXiv:2602.15390},
  year         = {2026}
}

\end{document}